\documentclass[12pt, reqno]{amsart}
\usepackage{mathrsfs}
\usepackage{amsfonts,amsmath,dsfont,amssymb,amsthm,stmaryrd,bbm,color,comment}
\usepackage[hidelinks]{hyperref}
\usepackage{appendix}
\usepackage[pdftex]{graphicx}
\usepackage{bbm}
\usepackage{array}
\newcolumntype{L}{>{\centering\arraybackslash}m{2.5cm}}
\usepackage{multirow}
\usepackage{xcolor}
\usepackage[percent]{overpic}
\usepackage{algorithm}
\usepackage{algpseudocode}
\usepackage{mathtools}

\newtheorem{thm}{Theorem}[section]
\newtheorem{lem}[thm]{Lemma}
\newtheorem{prop}[thm]{Proposition}
\newtheorem{coro}[thm]{Corollary}

\theoremstyle{definition}
\newtheorem{defn}[thm]{Definition}

\theoremstyle{remark}
\newtheorem{rmk}[thm]{Remark}
\numberwithin{equation}{section}

\newcommand{\Z}{\mathbb{Z}}

\newcommand{\E}{\mathbb{E}}

\begin{document}
\setcounter{page}{1}

\color{black}{
\centerline{}
\centerline{}
\title[CLT in R\'enyi divergence for lattice random variables]{Central limit theorem in R\'enyi divergence for lattice random variables}
\author[Zhen Fu, and Jiange Li]{Zhen Fu, and Jiange Li} 
\address{(Z.F, J.L) Institute for Advanced Study in Mathematics, Harbin Institute of Technology, China}
\email{zhenfu@stu.hit.edu.cn, jiange.li@hit.edu.cn}

\begin{abstract}
We establish a central limit theorem in Rényi divergence for independent and identically distributed lattice random variables $X_1, \cdots, X_n$ with zero mean, unit variance, and maximal span $h>0$. Let $S_n=(X_1+\cdots+X_n)/\sqrt n$. Let $Z_n$ denote the standard Gaussian distribution quantized on the support lattice of $S_n$. For every $\alpha>1$, with $\beta=\alpha/(\alpha-1)$, we prove that the R\'enyi divergence $D_\alpha(S_n\|Z_n)\to 0$ if and only if the divergence is finite at some convolution level and the strict sub-Gaussian condition
$$
\mathbb E e^{tX}<e^{\beta t^2/2},\quad t\in\mathbb R,~ t\ne0
$$
holds. Under these conditions, we further derive an Edgeworth-type asymptotic expansion of the divergence to arbitrary order. These results provide a lattice counterpart of the Rényi entropic central limit theorem for continuous random variables due to Bobkov, Chisyakov and G\"{o}tze (\emph{Ann. Probab.} \textbf{47} (2019), 270--323).
\end{abstract} 

\maketitle


\section{Introduction}

\subsection{Background}

Let $X_1,  \cdots, X_n$ be independent and identically distributed (i.i.d.) random variables with zero mean and unit variance. The classical central limit theorem (CLT for short) asserts that the normalized sum
\begin{equation*}
S_n=\frac{X_1+\cdots+X_n}{\sqrt{n}}
\end{equation*}
converges weakly to a standard Gaussian random variable $Z$. An information-theoretic proof of the CLT, even for non-identically distributed summands, was first given by Linnik \cite{Lin59}. Subsequently, Barron \cite{Bar86} established the entropic CLT in the i.i.d. setting: the Kullback–Leibler (KL) divergence $D(S_n\| Z)$ tends to 0 as $n\to\infty$ provided that $D(S_{n_0}\| Z)<\infty$ for some integer $n_0$. By Pinsker's inequality, Barron's entropic CLT implies the classical version, thereby offering a stronger mode of convergence. The KL divergence is intimately linked to other fundamental metrics. For instance, Talagrand's $W_2$ inequality relates it to the quadratic Wasserstein distance, while the logarithmic Sobolev inequality for the standard Gaussian measure connects it to the relative Fisher information. Further developments of the entropic CLT include the monotonicity of $D(S_n\| Z)$ \cite{ABBN04-a}, quantitative convergence under spectral gap conditions \cite{ABBN04-b, JB04}, Edgeworth-type expansion of $D(S_n\| Z)$ \cite{BCG13}, and extensions to non-identically distributed random variables \cite{Joh00, BCG14-a}.

Let $p_n(x)$ be the density of $S_n$.  For $\alpha>0$, $\alpha\neq1$, the R\'enyi divergence $D_\alpha(S_n\| Z)$ of order $\alpha$ of $S_n$ from $Z$ is defined by
$$
D_\alpha(S_n\|Z) =\frac{1}{\alpha-1}\log\int_{\mathbb R} \frac{p_n(x)^\alpha}{\varphi(x)^{\alpha-1}}dx,
$$
where $\varphi(x)=\frac{1}{\sqrt{2\pi}}e^{-x^2/2}$ is the standard Gaussian density. The mapping $\alpha\mapsto D_\alpha(S_n\|Z)$ is nondecreasing, and its limit at $\alpha=1$ recovers the KL divergence (and the subindex 1 is typically omitted). Therefore, the R\'enyi divergence of order $\alpha>1$ provides a scale of convergence notations that are stronger than the KL divergence. For $\alpha>1$, the weight $\varphi(x)^{1-\alpha}$ grows exponentially at a Gaussian rate, so the convergence of $D_\alpha(S_n\|Z)$ is sensitive not only to the bulk of the distribution but also to moderate and large deviations of $p_n(x)$. Consequently, the convergence of $D(S_n\| Z)$ requires only general moment conditions, whereas the convergence of $D_\alpha(S_n\| Z)$ for $\alpha>1$ necessitates sub-Gaussianity and hence finiteness of all moments. This distinction underlies the CLT in Rényi divergence established by Bobkov, Chistyakov and G\"{o}tze \cite{BCG19}. An extension to the infinite-order Rényi divergence $D_\infty(S_n\|Z)$ was subsequently obtained in \cite{BG25-a, BG25-b}. For a comprehensive overview, we refer the reader to the recent survey \cite{BCG25}.

The present paper investigates the entropic CLT for lattice random variables. A random variable $X$ is said to be a \textit{lattice random variable} if there exists $a\in \mathbb R$ and $h>0$ such that $X$ is supported on the lattice $a+h\mathbb Z:=\{a+kh: k \in \mathbb Z\}$; the largest such $h>0$ is called the \textit{maximal span} of $X$. Let $X_1, \cdots, X_n$ be i.i.d. lattice random variables with zero mean and unit variance. The normalized sum $S_n$ is a lattice random variable with maximal span $\delta_n=h/\sqrt n$ supported on 
$$
\mathcal{L}_n=\{x_{n, k}\}_{k\in\mathbb Z}, \quad x_{n,k}=\frac{na+kh}{\sqrt n}.
$$
We write $p_{n, k}:=\mathbb P(S_n=x_{n, k})$. In this discrete setting, the standard Gaussian distribution itself cannot serve directly as the reference measure, since the divergence of $S_n$ from the continuous Gaussian law is infinite. Instead, we compare $S_n$ with the quantized Gaussian random variable $Z_n$ supported on $\mathcal{L}_n$, defined by
\begin{equation}\label{eq:pmf-zn}
\mathbb P(Z_n=x_{n,k})=\frac{q_{n, k}}{\sum_{j\in\Z}q_{n, j}}, \quad q_{n, k}=\delta_n\varphi(x_{n,k}).
\end{equation}
The R\'enyi divergence $D_{\alpha}(S_n\|Z_n)$ of order $\alpha$ of $S_n$ from $Z_n$ is defined by
\begin{equation}\label{eq:R-div}
D_\alpha(S_n\|Z_n)=\frac{1}{\alpha-1}\log \sum_{k \in \mathbb{Z}} \frac{\mathbb P(S_n=x_{n,k})^\alpha}{\mathbb P(Z_n=x_{n,k})^{\alpha-1}}.
\end{equation}
To the best of our knowledge, results on the convergence of $D_\alpha(S_n\|Z_n)$ for lattice random variables are scarce. Takano~\cite{Tak87} established the KL divergence $D(S_n\|Z_n)$ tends to 0 with an almost $O(1/\sqrt n)$ rate under suitable moment assumptions. More recently, Gavalakis and Kontoyiannis~\cite{GK24} reproved the convergence $D(S_n\|Z_n)\to 0$ by transforming the lattice problem into the continuous setting through the addition of independent uniform random variables, thereby invoking Barron's entropic CLT.


\subsection{Main result and poof sketch}

The following result establishes a necessary and sufficient condition for convergence of normalized sums $S_n$ to quantized Gaussian $Z_n$ in R\'enyi divergence of order $\alpha>1$: Finiteness of the divergence at some convolution level together with a strict sub-Gaussian bound on the Laplace transform. This constitutes the discrete counterpart of the entropic CLT due to Bobkov, Chistyakov and G\"{o}tze \cite{BCG19}.  
\begin{thm}\label{thm:main}
Let $\alpha >1$ and $\beta=\alpha/(\alpha-1)$. Let $X_{1},\cdots,X_{n}$ be independent copies of a lattice random variable $X$ with zero mean, unit variance, and maximal span $h>0$. Then $$
D_{\alpha}(S_{n}||Z_{n})\to 0 \quad \text{as}~n\to\infty
$$  
if and only if the following two conditions are fulfilled:
\begin{enumerate}
\item $D_{\alpha}(S_{n_0}||Z_{n_0}) < \infty$ for some $n_0\in\mathbb Z$;
\item $\mathbb E e^{tX} <e^{\beta t^2/2}$ for all $t \in \mathbb R,~t\neq0$.
\end{enumerate}
Moreover, under the above two conditions, for any given integer $s\geq 3$, we have the following expansion
\begin{align*}
D_{\alpha}(S_{n}||Z_{n})=\frac{1}{\alpha-1} \sum_{j=1}^{\lfloor s/2-1 \rfloor}b_{j}n^{-j}+o\big(n^{-\frac{s-2}{2}}\big),
\end{align*}
where the coefficients $b_j$'s represent certain polynomials of the cumulants of $X$. 

\end{thm}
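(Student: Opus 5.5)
The plan follows the strategy of Bobkov, Chistyakov and G\"otze \cite{BCG19}, replacing densities by the lattice masses $p_{n,k}$ and integrals by sums over $\mathcal L_n$. Write
$$
e^{(\alpha-1)D_\alpha(S_n\|Z_n)} = \Big(\sum_j q_{n,j}\Big)^{\alpha-1}\sum_{k}\frac{p_{n,k}^\alpha}{q_{n,k}^{\alpha-1}},
$$
and note that $\sum_j q_{n,j}=1+O(e^{-c n})$ by Poisson summation, so this normalization does not affect any polynomial order. The core quantity is therefore $I_n=\sum_k \delta_n\,(p_{n,k}/\delta_n)^\alpha \varphi(x_{n,k})^{1-\alpha}$. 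Split it into three zones: a central zone $|x_{n,k}|\le T_n:=c\sqrt{\log n}$, a moderate zone $T_n<|x|\le \varepsilon\sqrt n$, and a large-deviation zone $|x|>\varepsilon\sqrt n$.

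\textbf{Necessity.} Condition (1) is trivially necessary. For (2), suppose $D_\alpha(S_n\|Z_n)\to0$. By H\"older's inequality with exponents $\alpha$ and $\beta$, for every $t$,
$$
\E e^{tS_n}=\sum_k p_{n,k}e^{tx_{n,k}}\le \Big(\sum_k \frac{p_{n,k}^\alpha}{P(Z_n=x_{n,k})^{\alpha-1}}\Big)^{1/\alpha}\Big(\E e^{\beta t Z_n}\Big)^{1/\beta}.
$$
Since $\E e^{\beta tZ_n}\le C e^{\beta^2t^2/2}$ uniformly in $n$, this gives $(\E e^{tX/\sqrt n})^n\le C' e^{\beta t^2/2}$. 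Replacing $t$ by $t\sqrt n$ yields $L(t)^n\le C'e^{n\beta t^2/2}$, where $L(t)=\E e^{tX}$, so $L(t)\le e^{\beta t^2/2}$. To get strict inequality, suppose $L(t_0)=e^{\beta t_0^2/2}$ for some $t_0\ne0$. Tilt at $t_0\sqrt n$ and use a local limit theorem for the tilted lattice walk, whose span is still $h$. This shows the Rényi sum along the direction $x\approx \beta t_0\sqrt n$ is bounded below by a positive constant, not $1+o(1)$, mirroring the BCG argument. I expect this to adapt directly.

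\textbf{Sufficiency.} Use condition (1) to make the summands regular. Under $D_\alpha(S_{n_0}\|Z_{n_0})<\infty$, a H\"older bound gives $p_{n,k}\le C\delta_n\,\varphi(x_{n,k})^{(\alpha-1)/\alpha}\cdot(\text{const})$ for blocks of $n_0$ summands. Convolving these blocks gives Gaussian-type pointwise control of $p_{n,k}$ for large $n$.

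\emph{Central zone.} Apply the lattice Edgeworth expansion $p_{n,k}=\delta_n\varphi(x_{n,k})\big(1+\sum_{r=1}^{s-2}q_r(x_{n,k})n^{-r/2}\big)+o(\delta_n n^{-(s-2)/2})$. All moments exist since $X$ is sub-Gaussian, and the error can be made relative with $(1+|x|^{s})$ weights via Cramér-type results. Expand $(1+u)^\alpha$ and replace the Riemann sum $\sum_k\delta_n F(x_{n,k})\varphi(x_{n,k})$ by $\int F\varphi$; the discrepancy is $O(e^{-cn})$ by Poisson summation, since $F$ is a polynomial. The odd powers integrate to zero, which produces the expansion in integer powers $n^{-j}$ with polynomial coefficients $b_j$ in the cumulants, exactly as in \cite{BCG19}.

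\emph{Moderate zone.} Use the lattice Cramér–Petrov expansion $p_{n,k}=\delta_n\varphi(x)\exp\{x^3\lambda(x/\sqrt n)/\sqrt n\}(1+O((1+|x|)/\sqrt n))$. With $\varepsilon$ small, the integrand is at most $\varphi(x)^{1-c}$, which is negligible beyond $T_n$.

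\emph{Large-deviation zone.} This is the main obstacle. Here I would bound $p_{n,k}\le \E e^{\tau S_n}e^{-\tau x}$ with $\tau=\beta x$, roughly. Strict sub-Gaussianity, together with continuity and compactness on $|t|\in[\varepsilon,M]$ and an analysis as $|t|\to\infty$, gives $L(t)\le e^{(\beta-\eta)t^2/2}$ for $|t|\ge\varepsilon$. This yields $p_{n,k}^\alpha\varphi^{1-\alpha}\lesssim e^{-c n}e^{-c'x^2}$, which is summable and exponentially small. The delicate part is the uniformity as $|t|\to\infty$. There I expect to need condition (1) again, via a decomposition of $S_n$ into blocks of size $n_0$ as in BCG, since a pure Chernoff bound loses a polynomial factor that is harmless for the argument. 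Combining the three zones gives $I_n=1+\sum_j b_jn^{-j}+o(n^{-(s-2)/2})$. Taking logarithms then gives the stated expansion and, in particular, $D_\alpha(S_n\|Z_n)\to0$.
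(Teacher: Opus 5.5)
\textbf{The gap is in the large-deviation zone.} Your claimed bound $L(t)\le e^{(\beta-\eta)t^2/2}$ for all $|t|\ge\varepsilon$ is false in general. Conditions (1)--(2) give only $\psi(t):=L(t)e^{-\beta t^2/2}\to0$ as $|t|\to\infty$, with no rate. For example, let $X$ be symmetric on $\mathbb Z$ with $\mathbb P(X=k)=\epsilon c\,e^{-k^2/(2\beta)}(1+|k|)^{-3}$ for $|k|\ge2$, and put the remaining mass on $\{0,\pm1\}$ so that $\E X^2=1$. Then $D_\alpha(X\|Z_1)<\infty$, since the summands are $\asymp(1+|k|)^{-3\alpha}$, and (2) holds for small $\epsilon$. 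Yet $\psi(t)\asymp\epsilon|t|^{-3}$. Correspondingly, for fixed $n$ the terms $p_{n,k}^\alpha q_{n,k}^{1-\alpha}$ decay only polynomially in $x_{n,k}$, so your bound $p_{n,k}^\alpha\varphi^{1-\alpha}\lesssim e^{-cn}e^{-c'x^2}$ cannot hold. The real difficulty is the convergence of an infinite sum over $|x_{n,k}|>\varepsilon\sqrt n$. Your closing remark (``I expect to need condition (1) again \ldots as in BCG'') leaves exactly this step undone. Also, convolving the pointwise bound $p_{n_0,k}\le A_{n_0,\alpha}^{1/\alpha}q_{n_0,k}^{1/\beta}$ over $\ell$ blocks, as in your opening sufficiency step, loses a factor exponential in $\ell$, so it does not supply the missing control.

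\textbf{What is needed is an endpoint integrability statement.} Your Chernoff idea is adequate for the pointwise part. The inequality $\mathbf 1\{S_n=x\}\le e^{y(S_n-x)}$ with $y=x/\beta$ gives $p_{n,k}\le e^{-x_{n,k}^2/(2\beta)}\psi(x_{n,k}/(\beta\sqrt n))^n$, which has the same shape as the paper's Fourier-analytic bound $p_{n,k}\le Ce^{-x_{n,k}^2/(2\beta)}\psi(\cdot)^{n-n_\beta}$. Hence $p_{n,k}^\alpha q_{n,k}^{1-\alpha}\le(\sqrt{2\pi}/\delta_n)^{\alpha-1}\psi(x_{n,k}/(\beta\sqrt n))^{\alpha n}$. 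With $\rho=\sup_{|t|\ge\varepsilon/\beta}\psi(t)<1$, the zone $|x_{n,k}|>\varepsilon\sqrt n$ contributes at most $O(n^{(\alpha-1)/2})\rho^{\alpha n-m}\sum_k\psi(x_{n,k}/(\beta\sqrt n))^m$. Completing the square in $\psi(s)^m=\E e^{s\sqrt m S_m-m\beta s^2/2}$ shows $\sum_k\psi(x_{n,k}/(\beta\sqrt n))^m\le(1+Cn)\,\E e^{S_m^2/(2\beta)}$, which makes the whole zone exponentially small. So everything hinges on $\E e^{S_m^2/(2\beta)}<\infty$ for some fixed $m$. This is the critical exponent, which neither (2) nor Proposition~\ref{prop:sub-Gaussian-a} (only $c<1/(2\beta)$) provides. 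The paper proves it in Proposition~\ref{prop:sub-Gaussian-cricical} and Corollary~\ref{coro:integrability}, with $m=\ell n_0$ and $\ell\ge\alpha$. It writes $\E e^{U_\ell^2/(2\beta)}$ as a multilinear form in $f=p_{n_0}/q_{n_0}^{1/\beta}\in\ell^\alpha$ against the kernel $\exp\big(-\frac{1}{4\beta\ell}\sum_{i,j}(y_i-y_j)^2\big)$, then applies H\"older plus an $\ell^\alpha\to\ell^{\beta(\ell-1)}$ bound for a discrete Gaussian smoothing operator. This is what your ``blocks of size $n_0$'' must become.

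The rest of your plan is sound. The central zone matches the paper. Cram\'er--Petrov in the moderate zone is a valid, heavier substitute for the paper's bound $\psi(t)\le e^{-(\beta-1)t^2/4}$ near $0$. Your tilting argument for necessity is a correct alternative to the paper's H\"older-stability/total-variation argument, provided you spell it out. Tangency at $t_0$ forces the tilted mean to be $\beta t_0$. With $u=x_{n,k}-\beta t_0\sqrt n$ one has exactly $p_{n,k}^\alpha q_{n,k}^{1-\alpha}=\tilde p_{n,k}^{\,\alpha}(\sqrt{2\pi}/\delta_n)^{\alpha-1}e^{(\alpha-1)u^2/2}$. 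The tilted local limit theorem then makes the window $|u|\le1$ contribute a fixed positive amount, on top of the $1-o(1)$ coming from $|x_{n,k}|\le K$. That contradicts $A_{n,\alpha}\to1$, which is the precise form of your ``not $1+o(1)$''.
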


\begin{rmk}
R\'enyi divergence is closely connected to relative Tsallis entropy (which will be introduced in Section \ref{Renyi divergence}). Moreover, they are of the same order when they are small. Therefore, the above result also holds for relative Tsallis entropy.
\end{rmk}

Now we give the proof sketch. We first explain the sufficiency part. As we will see later, the normalizing denominator $\sum_{j\in \mathbb Z}q_{n,j}$ in equation \eqref{eq:pmf-zn} tends to one as $n\to \infty$. Hence, by definition \eqref{eq:R-div}, the convergence of $D_{\alpha}(S_{n}||Z_{n})\to 0$ as $n\to\infty$ is equivalent to 
$$
\sum_{k\in\mathbb Z}\left(\frac{p_{n, k}}{q_{n, k}}\right)^\alpha q_{n, k}\to 1 \quad \text{as}~n\to\infty.
$$ 
We split the summation into a central part and a tail part. The central region consists of $x_{n, k}=O(\sqrt {\log n})$, and under general moment conditions, the classical local limit theorem (Proposition \ref{prop:llt}) gives the uniform approximation (with respect to $k$)
$$
\frac{p_{n, k}}{q_{n, k}}=1+o(1),
$$
where the remainder term has polynomial decay. This shows that the central part is $1+o(1)$ (see Proposition \ref{prop:bulk-expansion}).

The main difficulty is the tail part, since the local limit theorem no longer provides effective information about $p_{n, k}$. However, the finiteness of $D_\alpha(S_{n_0}\| Z_{n_0})$ for some $n_0$ yields the sub-Gaussianity of $X$ (see Proposition \ref{prop:sub-Gaussian-a}). This property, combined with tools from complex and Fourier analysis, allows us to establish a nonuniform pointwise bound of the form (see Proposition \ref{prop:moderate-large deviation}):
\begin{equation}\label{eq:tail-deviation}
p_{n,k}\leq C\exp\left(-\frac{x_{n,k}^{2}}{2\beta}\right)\psi\left(\frac{x_{n,k}}{\beta\sqrt{n}}\right)^{n-O(1)},\quad k\in\mathbb{Z}.
\end{equation}
Here, $\psi(t)=\mathbb E e^{tX} e^{-\beta t^2/2}$, which satisfies that $0<\psi(t)<1$ for $t\in\mathbb R,~t\ne 0$. The factor 
$$
\psi\left(\frac{x_{n,k}}{\beta\sqrt{n}}\right)^{n-O(1)}
$$ 
exhibits polynomial decay for $x_{n, k}$ of order $\sqrt{\log n}$, and exponential decay for $x_{n, k}$ of order $\sqrt n$. This compensates for the growing Gaussian weight $q_{n,k}^{1-\alpha}$, and consequently, we can show that the tail concentration is $o(1)$.

For the necessity part, the finiteness condition $D_\alpha(S_{n_0}\| Z_{n_0})<\infty$ follows immediately from the convergence of $D_{\alpha}(S_{n}\|Z_{n})\to 0$ as $n\to\infty$. As noted earlier, the finiteness of R\'enyi divergence implies the sub-Gaussianity of $X$, i.e.,  
$$
\E e^{tX}\leq e^{\beta t^2/2} \quad \text{for all}~t\in\mathbb R.
$$
It remains to exclude equality at a nonzero point. If equality held at some $t_0\neq0$, one could form exponential tilts of the law of $S_n$ and of the
quantized Gaussian $Z_n$. R\'enyi convergence implies that the corresponding pair of tilted measures become close in total variation. On the other hand, the
two quantized Gaussian measures involved have centers separated by order $\sqrt n$, so their total variation distance tends to its maximal value. The
triangle inequality then gives a contradiction. This proves the strict Laplace-transform condition.

\subsection{Comparison with related work}
A standard route to the entropic CLT is to prove a suitable Fisher information inequality and then to integrate it using de Bruijn's identity along the heat semigroup. This strategy has been employed in a number of works, including Barron \cite{Bar86}, Johnson \cite{Joh00}, Arstein, Ball, Barthe and Naor \cite{ABBN04-a, ABBN04-b}, Johnson and Barron \cite{JB04}. In the discrete setting, however, this method faces fundamental difficulties due to the absence of both Fisher information and de Bruijn's identity. An alternative approach, developed mainly by Bobkov, Chistyakov and G\"{o}tze \cite{BCG13, BCG14-a, BCG14-b, BCG19, BG25}, proceeds by rewriting the strong distance as a functional of the ratio between the normalized sum and the reference measure. The estimate is then split into a central part and a tail part: the central part is controlled via a Edgeworth-type local limit theorem, while the tail is handled by a distance-specific tail method. This approach applies to both continuous and lattice distributions and is particularly useful for obtaining refined convergence rates and asymptotic expansions.

Our proof of the sufficiency part of Theorem \ref{thm:main} follows the same general strategy as \cite{BCG19}, namely the central–tail decomposition outlined in the previous subsection.

\begin{enumerate}
\item In the central region, both our paper and \cite{BCG19} employ an Edgeworth-type local limit theorem together with Taylor expansion. In addition, we use the Poisson summation formula to derive a series of estimates for quantized Gaussian moments, relating these sums to the corresponding Gaussian integrals with exponentially small errors. Consequently, the lattice structure contributes exponentially small errors and does not affect the polynomial-order expansion of the main term.

\item The key tool in the tail analysis is the pointwise bound \eqref{eq:tail-deviation}, which is analogous to Proposition 13.1 of \cite{BCG19}. Both proofs rely on a shifted Fourier inversion formula. In the continuous case, Fourier inversion is over $\mathbb R$, and the contour is shifted using decay on the vertical sides. In our discrete setting, by contrast, we employ lattice Fourier analysis and exploit the periodicity of the Fourier inversion integrand induced by the lattice structure to shift the contour over the fundamental interval $[-\pi/\delta_n, \pi/\delta_n]$ (Lemmas \ref{lem:phi-phi_n-extension}, \ref{lem:periodicity of F_n} and \ref{lem:+iy}). This periodicity causes the two vertical contour integrals to cancel exactly. The argument relies on (enhanced) sub-Gaussianity, which is facilitated by introducing a discrete Gaussian smoothing operator, analogous to Weierstrass transform used in \cite{BCG19}.
\end{enumerate}

Our proof of the necessity direction differs significantly from that of \cite{BCG19}. Both proofs first reduce the problem to showing that equality in the sub-Gaussian condition cannot occur at any nonzero point. In \cite{BCG19}, equality is excluded using high-power Laplace-transform estimates and local analyticity. Our proof, instead, uses the sequential stability of H\"{o}lder's inequality together with total-variation estimates for three probability measures, leading to a contradiction with the triangle inequality. This argument is conceptually different, arguably simpler, and potentially portable back to the continuous setting (since the argument is essentially independent of the lattice structure).


\subsection{Notations}
The following notations will be used throughout the paper. Let $X$ be a lattice random variable taking values in $a+h\mathbb Z$, with zero mean, unit variance, and maximal span $h>0$. Let $X_1, \cdots, X_n$ be independent copies of $X$. We define 
\begin{itemize}
\item The normalized sum: 
$$
S_{n}=\frac{X_{1}+\cdots+X_{n}}{\sqrt{n}}.
$$
\item The support of $S_n$: 
$$
\mathcal{L}_n=\{x_{n, k}\}_{k\in\mathbb Z},\quad \quad~x_{n,k}=\frac{na+kh}{\sqrt n}.
$$
\item The probability mass function of $S_n$: 
$$
p_{n,k}=\mathbb P(S_{n}=x_{n,k}).
$$
\item The maximal span of $S_{n}$: 
$$
\delta_n=\frac{h}{\sqrt{n}}.
$$
\item The characteristic functions of $X$ and $S_n$: 
$$
\phi(t)=\mathbb Ee^{itX},~\quad\phi_{n}(t)=\mathbb Ee^{itS_{n}}=\phi\left(\frac{t}{\sqrt{n}}\right)^{n},\quad t \in \mathbb{R}.
$$
\item The cumulants $\{\gamma_{j}\}_{j=1}^\infty$ of $X$, defined by   
$$
\gamma_{j}=i^{-j}(\text{log}\phi)^{(j)}(0).
$$
\item The standard Gaussian density: 
$$
\varphi(x)=\frac{1}{\sqrt{2\pi}}e^{-x^2/2}.
$$
\item A sequence $\{q_{n,k}\}_{k\in\mathbb Z}$ given by 
$$
q_{n,k}=\delta_{n}\varphi(x_{n,k}).
$$
\item A quantized Gaussian random variable $Z_n$ taking values in $\mathcal{L}_{n}$ with given by
$$
\mathbb P(Z_{n}=x_{n,k})=\frac{q_{n,k}}{\sum_{j\in\mathbb Z}q_{n, j}}.
$$
\item The R\'enyi divergence $D_{\alpha}(S_n\|Z_n)$:
$$
D_\alpha(S_n\|Z_n)=\frac{1}{\alpha-1}\log \sum_{k \in \mathbb{Z}} \frac{\mathbb P(S_n=x_{n,k})^\alpha}{\mathbb P(Z_n=x_{n,k})^{\alpha-1}}.
$$
\item Constants appearing in $O(\cdot)$ and $o(\cdot)$ are independent of $n$, but may depend on other given parameters.
\end{itemize}


\section{Preliminaries} \label{sec:Preliminaries}
This section establishes the preliminary tools that will be employed throughout the remainder of the paper.

\subsection{R\'enyi divergence} \label{Renyi divergence}

\begin{defn} 
Let $P$ and $Q$ be two probability distributions on a countable set $\mathcal{X}$, with probability mass functions $\{p(x)\}_{x\in\mathcal{X}}$ and $\{q(x)\}_{x\in\mathcal{X}}$, respectively. For $\alpha>0$ and $\alpha\neq 1$, the R\'enyi divergence $D_{\alpha}(P\|Q)$ of order $\alpha$ of $P$ from $Q$ is defined by
$$
D_{\alpha}(P\|Q)=\frac{1}{\alpha-1}\log \sum_{x\in\mathcal{X}} \frac{p(x)^{\alpha}}{q(x)^{\alpha-1}}.
$$
By taking a limit, one can define the R\'enyi divergence for special orders $\alpha=0, 1, \infty$ by
\begin{align*}
D_{0}(P\|Q) &=-\log Q(\{x\in\mathcal{X}: p(x)>0\}),\\
D_1(P\|Q)&=\sum_{x\in\mathcal{X}} p(x)\log\frac{p(x)}{q(x)},\\
D_\infty(P\|Q) &= \log\sup_{x\in\mathcal{X}}\frac{p(x)}{q(x)},
\end{align*}
where $D_1(P\|Q)$ is the classical Kullback--Leibler divergence (i.e., relative entropy) and the subindex 1 is usually omitted. 
\end{defn}

We briefly review some general properties of R\'enyi divergence; further details can be found in \cite{EH14}. Jensen's inequality implies both the positivity $D_{\alpha}(P\|Q)\geq0$ and the monotonicity of $D_{\alpha}(P\|Q)$ in $\alpha$. Consequently, the definition of R\'enyi divergence extends to limiting orders by taking appropriate limits. For $\alpha>0$, the equality $D_{\alpha}(P\|Q)=0$ holds if and only if $P=Q$; for $\alpha=0$, the equality holds if and only if $Q\ll P$.  The R\'enyi divergence of order $0<\alpha<1$ exhibits several distinctive properties. In particular, all such divergences in this range are equivalent in the sense that for any $0<\alpha<\beta<1$, 
$$
\frac{\alpha}{1-\alpha}\frac{1-\beta}{1-\alpha}D_\beta(P\|Q)\le D_\alpha(P\|Q)\le D_\beta(P\|Q).
$$
R\'enyi divergence is not symmetric with respect to its arguments $P$ and $Q$, and hence does not define a metric on the space of probability distributions. Nevertheless, it provides a useful family of information-theoretic measures for quantifying the discrepancy between distributions. Moreover, it is closely related to several important statistical distances, including the total variation distance, the Hellinger distance, and the $\chi^2$ distance; for a comprehensive treatment, see \cite{BCG19}.

Furthermore, the R\'enyi divergence $D_{\alpha}(P\|Q)$ is intimately connected to the relative Tsallis entropy $T_\alpha(P\|Q)$, which is defined by
$$
T_\alpha(P\|Q)=\frac{1}{\alpha-1}\left[\sum_{x\in\mathcal{X}} \frac{p(x)^\alpha}{q(x)^{\alpha-1}}-1\right].
$$
A direct calculation yields
$$
T_\alpha(P\|Q)=\frac{1}{\alpha-1}\left[e^{(\alpha-1)D_{\alpha}(P\|Q)}-1\right].
$$
Therefore, we have $D_{\alpha}(P\|Q)\le T_{\alpha}(P\|Q)$, and moreover, the two quantities are of the same order when they are small.

\subsection{Fourier transform}

In this subsection, we discuss the Fourier transform on the lattice $a+h\mathbb Z$, together with the associated Plancherel theorem and Hausdorff--Young inequality. We begin by recalling the Fourier transform on $h\mathbb Z=\{hk: k \in \mathbb{Z}\}$, whose dual group is isomorphic to the fundamental interval $[-\pi/h, \pi/h)$.

\begin{defn} [Fourier transform on $h\mathbb{Z}$]
The  Fourier transform of  $f\in \ell^1(h\mathbb Z)$ is defined by
\begin{equation*}
\widehat f(t)=\sum_{k\in\mathbb Z} f(kh)e^{itkh},\quad t\in \left[-\frac{\pi}{h}, \frac{\pi}{h}\right). 
\end{equation*}
Here, $\ell^p(h\mathbb{Z})
=\left\{
f:h\mathbb{Z}\to\mathbb{C}:\sum_{k\in\mathbb{Z}} |f(kh)|^p<\infty\right\}$ for $1\leq p <\infty$.
The Fourier inversion formula is given by
\begin{equation*}
f(kh)=\frac{h}{2\pi}	\int_{-\pi/h}^{\pi/h}\widehat f(t)e^{-itkh} \,dt.
\end{equation*}
\end{defn}

The following Plancherel Theorem and Hausdorff-Young inequality are special cases of Theorems 4.26 and 4.28, respectively, in \cite{Fol16}.

\begin{thm}[The Plancherel Theorem on $h\mathbb{Z}$]\label{thm:Plancherel-a}
For $f \in \ell^{1}(h\mathbb Z)$, we have
\begin{equation*}
\frac{h}{2\pi}\int_{-\pi/h}^{\pi/h}|\widehat f(t)|^2\,dt=\sum_{k\in\mathbb Z}|f(kh)|^2.
\end{equation*}
\end{thm}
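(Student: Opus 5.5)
Since $f\in\ell^1(h\mathbb Z)$, the series defining $\widehat f(t)$ converges absolutely and uniformly in $t$, so $\widehat f$ is continuous and bounded on $[-\pi/h,\pi/h]$ and $|\widehat f(t)|^2$ is integrable there. I plan to prove the identity by direct expansion and orthogonality of the characters $t\mapsto e^{itkh}$, $k\in\mathbb Z$, on the fundamental interval. No density argument is needed, because the hypothesis is $f\in\ell^1$ rather than $\ell^2$. The right-hand side is finite, since $\ell^1(h\mathbb Z)\subset\ell^2(h\mathbb Z)$ via $\sum_k|f(kh)|^2\le(\sup_k|f(kh)|)\sum_k|f(kh)|$.

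\textbf{Step 1 (expand the square).} For each $t$, write
\begin{equation*}
|\widehat f(t)|^2=\widehat f(t)\overline{\widehat f(t)}=\sum_{k\in\mathbb Z}\sum_{l\in\mathbb Z} f(kh)\overline{f(lh)}\,e^{it(k-l)h}.
\end{equation*}
This double series converges absolutely, with
\begin{equation*}
\sum_{k,l}|f(kh)||f(lh)|=\Big(\sum_k|f(kh)|\Big)^2<\infty,
\end{equation*}
and the bound is uniform in $t$.

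\textbf{Step 2 (integrate term by term).} By this uniform absolute bound and Fubini--Tonelli (equivalently, dominated convergence for the partial sums), I interchange the integral over $[-\pi/h,\pi/h]$ with the double sum. This gives
\begin{equation*}
\frac{h}{2\pi}\int_{-\pi/h}^{\pi/h}|\widehat f(t)|^2\,dt=\sum_{k,l} f(kh)\overline{f(lh)}\cdot\frac{h}{2\pi}\int_{-\pi/h}^{\pi/h}e^{it(k-l)h}\,dt .
\end{equation*}

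\textbf{Step 3 (orthogonality).} Compute the inner integral directly. For $m=k-l\neq0$ it equals
\begin{equation*}
\frac{e^{i\pi m}-e^{-i\pi m}}{imh}=\frac{2\sin(\pi m)}{mh}=0,
\end{equation*}
while for $m=0$ it equals $2\pi/h$. Hence $\frac{h}{2\pi}\int_{-\pi/h}^{\pi/h} e^{it(k-l)h}\,dt=\mathbb 1_{\{k=l\}}$. Only the diagonal terms survive, and they give $\sum_k|f(kh)|^2$, which is the claim.

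There is no real obstacle here. The only point requiring care is the justification of the interchange in Step 2, which is exactly where the $\ell^1$ hypothesis enters. The same computation without the square shows that the Fourier inversion formula stated in the definition is consistent with this normalization.
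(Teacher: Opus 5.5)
Your proof is correct and complete. The bound $\sum_{k,l}|f(kh)|\,|f(lh)|=\bigl(\sum_k|f(kh)|\bigr)^2$, integrated over a finite interval, justifies the Fubini interchange. The orthogonality computation then gives the diagonal sum with exactly the weight $\frac{h}{2\pi}\,dt$.

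Your route differs from the paper's, which does not prove this statement at all. The paper quotes it as a special case of the abstract Plancherel theorem for locally compact abelian groups (Theorem 4.26 in Folland's book). That theorem is applied to $G=h\mathbb{Z}$ with counting measure, with the dual group identified with $[-\pi/h,\pi/h)$.

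The citation gives more than is stated: the transform extends to a unitary map $\ell^2(h\mathbb{Z})\to L^2\bigl([-\pi/h,\pi/h),\frac{h}{2\pi}dt\bigr)$. However, it leaves implicit why the dual Haar measure is normalized as $\frac{h}{2\pi}\,dt$. In the abstract theorem that normalization is fixed by requiring the inversion formula to hold.

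Your direct computation is self-contained and, because the hypothesis is $\ell^1$, needs no density or extension step. It also verifies the normalization explicitly. It covers everything the paper actually uses: Plancherel is only invoked in the proof of Proposition~\ref{prop:moderate-large deviation}, for the sequence $\bigl\{e^{-y x_{n_0,j}\sqrt{n_0/n}}\,p_{n_0,j}\bigr\}_{j}$. That sequence is summable by the sub-Gaussian bounds of Section~\ref{sec:sub-Gaussianity}, so the $\ell^1$ version you proved is exactly what is needed.
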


\begin{thm}[The Hausdorff--Young Inequality on $h\mathbb{Z}$]\label{thm:HYI-a}
Suppose $1\le p\le 2$ and $1/p+1/q=1$. If $f\in \ell^p(h\mathbb Z)$, then we have
\begin{equation*}
\widehat f\in L^q\left(\left[-\frac{\pi}{h},\frac{\pi}{h}\right),\frac{h}{2\pi}\,dt\right),
\end{equation*}
and moreover
\begin{equation*}
\left(\frac{h}{2\pi}\int_{-\pi/h}^{\pi/h}|\widehat f(t)|^q\,dt\right)^{1/q}\le\left(
\sum_{k\in\mathbb Z}|f(kh)|^p\right)^{1/p}.
\end{equation*}
\end{thm}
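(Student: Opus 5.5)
The plan is to obtain the Hausdorff--Young inequality on $h\mathbb Z$ by complex interpolation between the two endpoint cases $p=1$ and $p=2$, via the Riesz--Thorin theorem. Throughout, I view the map $f\mapsto \widehat f$ as a linear operator $T$ from functions on $h\mathbb Z$ (with counting measure) to functions on $[-\pi/h,\pi/h)$ with the normalized measure $\frac{h}{2\pi}\,dt$, which is a probability measure.

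\textbf{Endpoints.} For the endpoint $p=1$, $q=\infty$, the triangle inequality applied to $\widehat f(t)=\sum_k f(kh)e^{itkh}$ gives
$$\sup_t|\widehat f(t)|\le \sum_k|f(kh)|.$$
So $T:\ell^1\to L^\infty$ has norm at most $1$. For the endpoint $p=2$, $q=2$, Theorem \ref{thm:Plancherel-a} gives $\|Tf\|_{L^2}=\|f\|_{\ell^2}$ for $f\in\ell^1(h\mathbb Z)$. In particular this holds on the dense subspace of finitely supported sequences, so $T$ extends uniquely to an isometry $\ell^2\to L^2$. This isometry agrees with the series definition on $\ell^1\cap\ell^2=\ell^1$.

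\textbf{Interpolation.} Next I apply the Riesz--Thorin theorem to the operator $T$, defined on finitely supported sequences, which lie in $\ell^1\cap\ell^2$. For $1\le p\le 2$, I write
$$\frac1p=\frac{1-\theta}{1}+\frac{\theta}{2},\qquad \theta\in[0,1].$$
Then the dual exponent satisfies $\frac1q=\frac{1-\theta}{\infty}+\frac{\theta}{2}$, which is exactly $1/p+1/q=1$. Riesz--Thorin yields
$$\|Tf\|_{L^q}\le 1^{1-\theta}\,1^{\theta}\,\|f\|_{\ell^p}=\|f\|_{\ell^p}.$$
This holds for every finitely supported $f$, with the normalized measure $\frac{h}{2\pi}dt$ on the target, matching the stated inequality.

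\textbf{Extension to all of $\ell^p$.} This last step is the only point that needs care, since for $p>1$ the series defining $\widehat f$ need not converge absolutely. Finitely supported sequences are dense in $\ell^p$ for $p<\infty$. By the bound just proved, $T$ extends uniquely to a bounded operator $\ell^p\to L^q$ of norm at most $1$, and this extension is how $\widehat f$ is understood for $f\in\ell^p$. For $f\in\ell^1\cap\ell^p$, the truncations converge to $f$ in $\ell^1$, so their transforms converge uniformly. Hence the extension coincides with the series definition, and the inequality holds for all $f\in\ell^p(h\mathbb Z)$.

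The main obstacle, such as it is, is making sure the target measure normalization $\frac{h}{2\pi}dt$ is used consistently at both endpoints, so that both operator norms are exactly $1$. At $p=1$ the normalization is irrelevant because the bound is on a supremum. At $p=2$ it is exactly the normalization in Theorem \ref{thm:Plancherel-a}. Alternatively, one may simply rescale $t\mapsto th$ and quote the standard Hausdorff--Young inequality for Fourier series on the circle (as in \cite{Fol16}).
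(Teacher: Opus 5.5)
Your argument is correct and is essentially the paper's route: the paper gives no proof of its own and simply cites this as a special case of Folland's Hausdorff--Young theorem for locally compact abelian groups, which is proved by Riesz--Thorin interpolation between the trivial $\ell^1\to L^\infty$ bound and Plancherel, and you have written that argument out on $h\mathbb Z$ with the normalized measure $\frac{h}{2\pi}\,dt$. Your density step, which extends $\widehat f$ from finitely supported sequences to all of $\ell^p$ and checks agreement with the series on $\ell^1$, also makes precise a point the paper leaves implicit, since the paper defines $\widehat f$ only for $f\in\ell^1$.
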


Now we transfer the above definitions and results to the lattice $a+h\mathbb Z$. For $1\leq p<\infty$, we define
\begin{equation*}
\ell^p(a+h\mathbb{Z})=
\left\{f:a+h\mathbb{Z}\to\mathbb{C}:\sum_{k\in\mathbb{Z}} |f(a+kh)|^p<\infty\right\}.
\end{equation*}

\begin{defn} [Fourier transform on $a+h\mathbb{Z}$]
For $f\in \ell^1(a+h\mathbb{Z})$, its Fourier transform is defined by
\begin{equation*} 
\widehat f(t)=\sum_{k\in\mathbb{Z}} f(a+kh)e^{it(a+kh)},\quad t\in\left[-\frac{\pi}{h},\frac{\pi}{h}\right).
\end{equation*}
The Fourier inversion formula is given by
\begin{equation} \label{eq:lattice inverse Fourier transform}
f(a+kh)=
\frac{h}{2\pi}\int_{-\pi/h}^{\pi/h}\widehat f(t)e^{-it(a+kh)}\,dt.
\end{equation}
\end{defn}

\begin{thm}[The Plancherel Theorem on $a+h\mathbb{Z}$]\label{thm:Plancherel-b}
If $f\in \ell^1(a+h\mathbb{Z})$, then
\begin{equation*}
\frac{h}{2\pi}\int_{-\pi/h}^{\pi/h}|\widehat f(t)|^2\,dt=
\sum_{k\in\mathbb{Z}} |f(a+kh)|^2.
\end{equation*}
\end{thm}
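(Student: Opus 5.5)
The Plancherel identity on $a+h\mathbb Z$ (Theorem \ref{thm:Plancherel-b}) can be reduced to the version on $h\mathbb Z$ (Theorem \ref{thm:Plancherel-a}) by translation. Given $f\in\ell^1(a+h\mathbb Z)$, we define $g$ on $h\mathbb Z$ by $g(kh)=f(a+kh)$. Then $g\in\ell^1(h\mathbb Z)$ and $\sum_k|g(kh)|^2=\sum_k|f(a+kh)|^2$, so the right-hand sides of the two identities coincide.

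Next we compare the transforms. Directly from the definitions,
$$
\widehat f(t)=\sum_{k\in\mathbb Z} f(a+kh)e^{it(a+kh)}=e^{ita}\sum_{k\in\mathbb Z} g(kh)e^{itkh}=e^{ita}\,\widehat g(t),\quad t\in\left[-\frac{\pi}{h},\frac{\pi}{h}\right).
$$
Both series converge absolutely because $f\in\ell^1$, so factoring out $e^{ita}$ is legitimate. Since $t$ is real, $|e^{ita}|=1$, and therefore $|\widehat f(t)|=|\widehat g(t)|$ pointwise on the fundamental interval.

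Integrating $|\widehat f(t)|^2=|\widehat g(t)|^2$ against $\frac{h}{2\pi}dt$ over $[-\pi/h,\pi/h)$ and applying Theorem \ref{thm:Plancherel-a} to $g$ gives the claimed identity.

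There is no genuine obstacle here. The only point needing care is that the phase factor $e^{ita}$ has modulus one and so disappears in $|\widehat f|^2$. The same argument also transfers the Hausdorff--Young inequality to $a+h\mathbb Z$. If one preferred not to quote \cite{Fol16}, the $h\mathbb Z$ case itself follows by expanding $|\widehat g|^2$ as a double sum and using the orthogonality relation $\frac{h}{2\pi}\int_{-\pi/h}^{\pi/h}e^{it(k-j)h}dt=\mathbf 1_{k=j}$. Fubini applies there because $\sum_{j,k}|g(jh)g(kh)|=\|g\|_1^2<\infty$.
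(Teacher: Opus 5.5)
Your proposal is correct and is essentially the paper's proof: the paper also sets $g(kh)=f(a+kh)$, notes $\widehat f(t)=e^{ita}\widehat g(t)$, and applies the $h\mathbb{Z}$ Plancherel theorem to $g$; you additionally make explicit that $|e^{ita}|=1$, which the paper leaves implicit. Your optional orthogonality argument for the $h\mathbb{Z}$ case, with Fubini justified by $\|g\|_1^2<\infty$, is also valid but not needed, since the paper simply cites Folland for that case.
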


\begin{thm}[The Hausdorff--Young Inequality on $a+h\mathbb{Z}$]\label{thm:HYI-b}
Suppose $1\le p\le 2$ and $1/p+1/q=1.$ If $f\in \ell^p(a+h\mathbb{Z})$, then 
$$
\widehat f\in L^q\left(\left[-\frac{\pi}{h},\frac{\pi}{h}\right),\frac{h}{2\pi}\,dt\right),
$$ 
and moreover
\begin{equation*}
\left(\frac{h}{2\pi}\int_{-\pi/h}^{\pi/h}|\widehat f(t)|^q\,dt\right)^{1/q}\leq\left(\sum_{k\in\mathbb{Z}} |f(a+kh)|^p\right)^{1/p}.
\end{equation*}
\end{thm}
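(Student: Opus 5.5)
The Hausdorff--Young inequality on $a+h\mathbb{Z}$ will be deduced from the version on $h\mathbb{Z}$ (Theorem \ref{thm:HYI-a}) by translation. Given $f\in\ell^p(a+h\mathbb{Z})$, define $g$ on $h\mathbb{Z}$ by $g(kh)=f(a+kh)$. This is a bijective relabeling of the values, so $g\in\ell^p(h\mathbb{Z})$ and
$$
\Bigl(\sum_{k\in\mathbb{Z}}|g(kh)|^p\Bigr)^{1/p}=\Bigl(\sum_{k\in\mathbb{Z}}|f(a+kh)|^p\Bigr)^{1/p}.
$$

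Next we relate the two Fourier transforms. For $f\in\ell^1$ the series converges absolutely, and factoring out the phase gives
$$
\widehat f(t)=\sum_{k\in\mathbb{Z}}f(a+kh)e^{it(a+kh)}=e^{ita}\,\widehat g(t),\qquad t\in\left[-\frac{\pi}{h},\frac{\pi}{h}\right).
$$
Since $|e^{ita}|=1$, we have $|\widehat f(t)|=|\widehat g(t)|$ pointwise. Hence the $L^q$ norms of $\widehat f$ and $\widehat g$ with respect to $\frac{h}{2\pi}\,dt$ coincide, and the inequality for $f$ follows immediately from Theorem \ref{thm:HYI-a} applied to $g$.

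The only point needing care is the meaning of $\widehat f$ when $f\in\ell^p$ with $p>1$ but $f\notin\ell^1$, since then the defining series need not converge absolutely. As in \cite{Fol16}, $\widehat g$ is defined on $\ell^p(h\mathbb{Z})$ as the $L^q$-limit of transforms of finitely supported truncations. We therefore set $\widehat f:=e^{ita}\widehat g$, which is consistent with the truncation definition: for each finitely supported truncation $f_N$ of $f$ and the corresponding truncation $g_N$ of $g$, the identity $\widehat{f_N}=e^{ita}\widehat{g_N}$ holds exactly, and multiplication by a unimodular function is an isometry of $L^q$. Passing to the limit $N\to\infty$ yields the claim in general, including the membership $\widehat f\in L^q\bigl([-\pi/h,\pi/h),\frac{h}{2\pi}\,dt\bigr)$. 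No real obstacle arises.
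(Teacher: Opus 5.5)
Your proposal is correct and matches the paper's proof. Both relabel $g(kh)=f(a+kh)$, observe $\widehat f=e^{ita}\widehat g$, and apply the $h\mathbb{Z}$ version (Theorem \ref{thm:HYI-a}). Your added remark on defining $\widehat f$ for $f\in\ell^p\setminus\ell^1$ by density is a harmless refinement that the paper leaves implicit.
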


\begin{proof}[Proof of Theorem \ref{thm:Plancherel-b} and \ref{thm:HYI-b}]
We define $g:h\mathbb{Z}\to\mathbb{C}$ by $g(kh)=f(a+kh)$. Then $g\in \ell^{p}(h\mathbb{Z})$ and note $\widehat f(t)=e^{ita}\widehat g(t)$. Then we can obtain Theorems \ref{thm:Plancherel-b} and \ref{thm:HYI-b} by applying Theorems \ref{thm:Plancherel-a} and \ref{thm:HYI-a} to $g$.
\end{proof}


\subsection{Stability of H\"older's inequality}

Let $(\Omega,\mu)$ be a measure space. For $1\le p<\infty$, we denote by $L^{p}(\Omega,\mu)$ the space of measurable functions $f: \Omega\to \mathbb{R}$ such that
$$
\|f\|_{p}:=\left(\int_{\Omega}|f(x)|^{p}\,\mu(dx)\right)^{1/p}<\infty.
$$
Let $q=p/(p-1)$ be the H\"older conjugate of $p$. The classical H\"older inequality states that for any $f\in L^p(\Omega,\mu)$ and $g\in L^q(\Omega,\mu)$, it holds that
\begin{equation*} 
\|fg\|_1\leq \|f\|_p\|g\|_q .
\end{equation*}
Moreover, if $\|f\|_{p}\|g\|_{q} >0$, equality holds if and only if 
\begin{equation*}
\frac{|f|^p}{\|f\|_p^p}=\frac{|g|^q}{\|g\|_q^q} \quad \text{a.e.}
\end{equation*}

A quantitative stability result of H\"older's inequality was established in \cite{Ald08}. 

\begin{thm}[\cite{Ald08}, Theorem 2.2] \label{thm:holder-stabi}
Let $1<p<\infty$. For $f\in L^p(\Omega,\mu), ~g\in L^q(\Omega,\mu)$ such that $\|f\|_p\|g\|_q>0$, it holds that
$$
\min\left\{\frac{1}{p},\frac{1}{q}\right\}\left\|\frac{|f|^{p/2}}{\|f\|_p^{p/2}}-\frac{|g|^{q/2}}{\|g\|_q^{q/2}}\right\|_{2}^{2}\leq 1-\frac{\|fg\|_1}{\|f\|_p\|g\|_q}\leq \max\left\{\frac{1}{p},\frac{1}{q}\right\}\left\|\frac{|f|^{p/2}}{\|f\|_p^{p/2}}-\frac{|g|^{q/2}}{\|g\|_q^{q/2}}\right\|_{2}^{2}.
$$
\end{thm}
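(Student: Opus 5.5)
The plan is to reduce the statement to an elementary pointwise inequality between two nonnegative numbers and then integrate it. First, normalize: set
$$
F=\frac{|f|^p}{\|f\|_p^p},\qquad G=\frac{|g|^q}{\|g\|_q^q}.
$$
These are nonnegative with $\int F\,d\mu=\int G\,d\mu=1$, and
$$
\frac{|fg|}{\|f\|_p\|g\|_q}=F^{1/p}G^{1/q}.
$$
Write $a=\sqrt F$ and $b=\sqrt G$. Since $\int\big(\tfrac1p F+\tfrac1q G\big)\,d\mu=\tfrac1p+\tfrac1q=1$, the middle quantity in the theorem equals
$$
1-\frac{\|fg\|_1}{\|f\|_p\|g\|_q}=\int_\Omega\Big(\tfrac1p a^2+\tfrac1q b^2-a^{2/p}b^{2/q}\Big)\,d\mu,
$$
and the outer quantity is $\|a-b\|_2^2=\int (a-b)^2\,d\mu$. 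It therefore suffices to prove the pointwise two-sided bound
$$
\min\{\tfrac1p,\tfrac1q\}(a-b)^2\le \tfrac1p a^2+\tfrac1q b^2-a^{2/p}b^{2/q}\le \max\{\tfrac1p,\tfrac1q\}(a-b)^2
$$
for all $a,b\ge 0$, and then integrate over $\Omega$.

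Next, reduce the pointwise bound to one variable. The inequality is symmetric under swapping $(a,p)$ with $(b,q)$, so we may assume $p\le 2$. Put $\theta=1/p\in[1/2,1)$, so that $1/q=1-\theta$, $\max=\theta$ and $\min=1-\theta$. The case $b=0$ is immediate, because the middle term is then $\theta a^2$, which lies between $(1-\theta)a^2$ and $\theta a^2$. By degree-two homogeneity we may otherwise take $b=1$ and $a=x\ge0$.

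The upper bound $\theta x^2+1-\theta-x^{2\theta}\le\theta(x-1)^2$ rearranges to
$$
x^{2\theta}\ge 2\theta x+1-2\theta.
$$
This is the tangent-line inequality at $x=1$ for the function $x^{2\theta}$, which is convex because $2\theta\ge1$.

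The lower bound $(1-\theta)(x-1)^2\le\theta x^2+1-\theta-x^{2\theta}$ rearranges to
$$
x^{2\theta}\le(2\theta-1)x^2+(2-2\theta)x.
$$
This follows from weighted AM--GM, writing $x^{2\theta}=(x^2)^{2\theta-1}x^{2-2\theta}$ with weights $2\theta-1$ and $2-2\theta$, which are nonnegative and sum to $1$.

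Integrating the pointwise bound over $\Omega$ then gives the theorem. The only step needing real care is choosing the correct normalization, namely the identity $\int(\tfrac1pF+\tfrac1qG)\,d\mu=1$, so that the deficit in H\"older's inequality becomes an integral of the pointwise Young deficit. After that, the whole argument is two convexity facts in one variable.
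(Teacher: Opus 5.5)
Your proof is correct and complete. The normalization $F=|f|^p/\|f\|_p^p$, $G=|g|^q/\|g\|_q^q$ turns the H\"older deficit into $\int_\Omega(\frac1p F+\frac1q G-F^{1/p}G^{1/q})\,d\mu$. The $(a,p)\leftrightarrow(b,q)$ symmetry legitimately reduces you to $\theta=1/p\in[1/2,1)$. Both one-variable facts check out:
\begin{itemize}
\item the upper bound is the tangent line at $x=1$ of the convex function $x^{2\theta}$;
\item the lower bound is weighted AM--GM with weights $2\theta-1$ and $2-2\theta$, applied to $x^2$ and $x$.
\end{itemize}
The degenerate cases $b=0$ and $\theta=1/2$ are also handled correctly.

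The paper gives no proof of this statement and simply imports it from \cite{Ald08}, so there is no proof to compare against. Your argument, a pointwise two-sided refinement of Young's inequality integrated over $\Omega$, is the natural self-contained proof and would make the paper self-contained at this step.

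The paper only ever uses the lower bound, in Corollary~\ref{coro:holder-stabi-seq}, where $\min\{1/p,1/q\}>0$ is what forces the $L^2$ distance to vanish. So only your AM--GM half is actually needed downstream.
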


As a consequence, we obtain the following sequential stability of H\"older's inequality, which will play a crucial role in our proof of the necessity direction of the main result. 

\begin{coro} \label{coro:holder-stabi-seq}
Let $1<p<\infty$. Let $\{f_{n}\}_{n=1}^\infty\subset L^{p}(\Omega,\mu)$ and $\{g_{n}\}_{n=1}^\infty\subset L^{q}(\Omega,\mu)$ such that $\|f_{n}\|_p\|g_{n}\|_q>0$. Suppose that 
$$
\lim_{n\to\infty}\frac{\|f_{n}g_{n}\|_{1}}{\|f_{n}\|_{p}\|g_{n}\|_{q}}=1,
$$
then 
$$
\lim_{n\to\infty}\left\|\frac{|f_{n}|^{p}}{\|f_{n}\|_p^{p}}-\frac{|g_{n}|^{q}}{\|g_{n}\|_q^{q}}\right\|_1=0.
$$
\end{coro}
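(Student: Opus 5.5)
The plan is to apply Theorem \ref{thm:holder-stabi} to the normalized pair
$$
F_n=\frac{|f_n|^{p/2}}{\|f_n\|_p^{p/2}},\qquad G_n=\frac{|g_n|^{q/2}}{\|g_n\|_q^{q/2}}.
$$
Both are nonnegative elements of $L^2(\Omega,\mu)$ with unit $L^2$ norm. Indeed, $\|F_n\|_2^2=\int |f_n|^p/\|f_n\|_p^p\,d\mu=1$, and the same computation gives $\|G_n\|_2=1$. Since $\|f_n\|_p\|g_n\|_q>0$, the hypotheses of Theorem \ref{thm:holder-stabi} hold for every $n$.

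The first step is to use the lower bound in Theorem \ref{thm:holder-stabi}:
$$
\min\{1/p,1/q\}\,\|F_n-G_n\|_2^2\le 1-\frac{\|f_ng_n\|_1}{\|f_n\|_p\|g_n\|_q}.
$$
By assumption the right-hand side tends to $0$. The constant $\min\{1/p,1/q\}$ is positive and does not depend on $n$, so $\|F_n-G_n\|_2\to0$.

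The second step passes from $L^2$ closeness of $F_n,G_n$ to $L^1$ closeness of their squares, which are exactly the functions $|f_n|^p/\|f_n\|_p^p$ and $|g_n|^q/\|g_n\|_q^q$ in the conclusion. Write $F_n^2-G_n^2=(F_n-G_n)(F_n+G_n)$ and apply Cauchy--Schwarz:
$$
\|F_n^2-G_n^2\|_1\le \|F_n-G_n\|_2\,\|F_n+G_n\|_2\le 2\,\|F_n-G_n\|_2 .
$$
The last inequality uses $\|F_n+G_n\|_2\le\|F_n\|_2+\|G_n\|_2=2$. Letting $n\to\infty$ then gives the conclusion of Corollary \ref{coro:holder-stabi-seq}.

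There is no genuine obstacle here. The only point to check is that the constant in Theorem \ref{thm:holder-stabi} is uniform in $n$, which it is since it depends only on $p$. One should also note that the factorization of $F_n^2-G_n^2$ is legitimate because $F_n,G_n\ge0$ are real-valued.
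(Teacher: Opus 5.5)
Your proof is correct and matches the paper's argument essentially step for step. The lower bound in Theorem \ref{thm:holder-stabi}, whose constant depends only on $p$, gives $\|F_n-G_n\|_2\to 0$. The factorization $F_n^2-G_n^2=(F_n-G_n)(F_n+G_n)$, combined with Cauchy--Schwarz and $\|F_n+G_n\|_2\le 2$, then yields the $L^1$ conclusion.
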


\begin{proof} 
By Theorem \ref{thm:holder-stabi}, we have 
$$
\left\|\frac{|f_{n}|^{p/2}}{\|f_{n}\|_p^{p/2}}-\frac{|g_{n}|^{q/2}}{\|g_{n}\|_q^{q/2}}\right\|_2\to 0 \quad \text{as}~n\to \infty.
$$
By Cauchy-Schwarz inequality, 
\begin{align*}
\left\|\frac{|f_{n}|^{p}}{\|f_{n}\|_p^{p}}-\frac{|g_{n}|^{q}}{\|g_{n}\|_q^{q}}\right\|_1
&=\left\|\left(\frac{|f_{n}|^{p/2}}{\|f_{n}\|_p^{p/2}}-\frac{|g_{n}|^{q/2}}{\|g_{n}\|_q^{q/2}}\right)\left(\frac{|f_{n}|^{p/2}}{\|f_{n}\|_p^{p/2}}+\frac{|g_{n}|^{q/2}}{\|g_{n}\|_q^{q/2}}\right)\right\|_{1}\\
&\leq \left\|\frac{|f_{n}|^{p/2}}{\|f_{n}\|_p^{p/2}}-\frac{|g_{n}|^{q/2}}{\|g_{n}\|_q^{q/2}}\right\|_{2} \cdot\left\|\frac{|f_{n}|^{p/2}}{\|f_{n}\|_p^{p/2}}+\frac{|g_{n}|^{q/2}}{\|g_{n}\|_q^{q/2}}\right\|_{2}\\
&\leq 2\left\|\frac{|f_{n}|^{p/2}}{\|f_{n}\|_p^{p/2}}-\frac{|g_{n}|^{q/2}}{\|g_{n}\|_q^{q/2}}\right\|_{2}\to 0 \quad \text{as}~n\to \infty.
\end{align*}
The last inequality follows from the fact that 
$$
\left\|\frac{|f_{n}|^{p/2}}{\|f_{n}\|_p^{p/2}}\right\|_2=\left\|\frac{|g_{n}|^{q/2}}{\|g_{n}\|_q^{q/2}}\right\|_2=1.
$$
\end{proof}


\section{Quantized Gaussian moments}\label{sec:quantized gaussian moments}

In this section, we use the Poisson summation formula to derive estimates for quantized Gaussian moments, relating these sums to the corresponding Gaussian integrals with exponentially small errors. These results will facilitate estimates in the subsequent sections. 

A function $f:\mathbb{R}\to \mathbb{C}$ is called a \textit{Schwartz function} if $f$ and  all its derivatives are rapidly decreasing, in the sense that 
$$
\sup_{x\in \mathbb{R}}|x|^{t}|f^{(k)}(x)|<\infty, \quad \forall ~t\geq 0,~k\in\mathbb{Z}_{\ge 0}.
$$
Theorem 3.1 of Chapter 5 in \cite{SS03} provides a Poisson summation formula on $\{a+\mathbb{Z}\}$. A simple change of variable yields the following extension. Therefore we omit the proof.

\begin{prop}[Poisson summation formula] 
Let $f:\mathbb{R}\to \mathbb{C}$ be a Schwartz function with the Fourier transform $\widehat f: \mathbb{R}\to \mathbb{C}$ defined by
$$
\widehat f(t)=\int_{\mathbb{R}} f(x)e^{itx}\,dx .
$$
For $a\in\mathbb{R}$ and $h>0$, we have the following Poisson summation formula
\begin{equation}\label{eq:poisson sum}
\sum_{k\in\mathbb{Z}} f(a+kh)=\frac{1}{h}\sum_{k\in\mathbb{Z}}\widehat f\left(\frac{2\pi k}{h}\right)e^{-\frac{2\pi ika}{h}}.
\end{equation}
\end{prop}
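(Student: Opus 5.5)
The statement to be proved is the Poisson summation formula \eqref{eq:poisson sum} on the lattice $a+h\mathbb Z$. The plan is to reduce it to the standard formula on $\mathbb Z$ (Theorem 3.1 of Chapter 5 in \cite{SS03}), which we use in the form $\sum_{k\in\mathbb Z} g(k)=\sum_{m\in\mathbb Z}\widehat g(2\pi m)$ for Schwartz $g$ with $\widehat g(t)=\int g(x)e^{itx}\,dx$. If that reference uses the convention $e^{-2\pi i x\xi}$, we first translate: its right-hand side $\sum_m \widetilde g(m)$ with $\widetilde g(\xi)=\int g(x)e^{-2\pi i x\xi}dx$ equals $\sum_m \widehat g(-2\pi m)=\sum_m\widehat g(2\pi m)$ after reindexing $m\mapsto -m$. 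So the sign convention is harmless.

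Define $g(x)=f(a+hx)$. Since affine changes of variable preserve rapid decay of all derivatives, $g$ is again a Schwartz function. Then $\sum_k f(a+kh)=\sum_k g(k)$. Next compute the Fourier transform of $g$ by substituting $y=a+hx$, $dx=dy/h$:
$$
\widehat g(t)=\int_{\mathbb R} f(a+hx)e^{itx}\,dx=\frac1h\int_{\mathbb R} f(y)e^{it(y-a)/h}\,dy=\frac1h e^{-ita/h}\,\widehat f\!\left(\frac th\right).
$$
Evaluating at $t=2\pi k$ gives $\widehat g(2\pi k)=\frac1h\widehat f(2\pi k/h)e^{-2\pi i k a/h}$. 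Summing over $k$ and applying the $\mathbb Z$-version yields exactly \eqref{eq:poisson sum}.

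The only point needing care is the sign and normalization convention for the Fourier transform in the cited theorem, handled as above; both series converge absolutely because $g$ and $\widehat g$ are Schwartz functions. No other step poses difficulty.
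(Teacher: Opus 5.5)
Your proof is correct and is essentially the argument the paper has in mind. The paper omits the proof, saying it follows from Theorem 3.1 of Chapter 5 in \cite{SS03} by a simple change of variable. Your affine substitution $g(x)=f(a+hx)$, the computation $\widehat g(t)=\frac1h e^{-ita/h}\widehat f(t/h)$, and your handling of the sign convention supply exactly that omitted change of variable.
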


Recall $x_{n, k}=(na+kh)/\sqrt{n}$ and $q_{n,k}=\delta_{n}\varphi(x_{n,k})$, where $\delta_n=h/\sqrt n$ and $\varphi(x)=\frac{1}{\sqrt{2\pi}}e^{-x^2/2}$. Now we establish estimates of quantized Gaussian moments.
 
\begin{lem} 
For all $\ell \geq 1$, we have
\begin{equation} \label{eq:gaussian-moment} 
\delta_{n}\sum_{k\in\mathbb{Z}}\varphi^{\ell}(x_{n,k})
=\int_{\mathbb{R}} \varphi^{\ell}(x)\,dx+O\big(e^{-2\pi^2 n/\ell h^2}\big).
\end{equation}	
In particular, we obtain
\begin{equation} \label{eq:normalizing-const}
\sum_{k\in\mathbb{Z}} q_{n,k}=1+O\big(e^{-2\pi^{2}n/h^{2}}\big).
\end{equation}
\end{lem}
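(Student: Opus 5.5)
We apply the Poisson summation formula \eqref{eq:poisson sum} to the Schwartz function $f=\varphi^\ell$ on the lattice $\mathcal L_n=\{x_{n,k}\}$, which has base point $\sqrt n a$ and span $\delta_n=h/\sqrt n$. Since $\varphi^\ell(x)=(2\pi)^{-\ell/2}e^{-\ell x^2/2}$ is a Gaussian, its Fourier transform is explicit:
$$
\widehat{\varphi^\ell}(t)=(2\pi)^{-\ell/2}\sqrt{\frac{2\pi}{\ell}}\,e^{-t^2/(2\ell)},
$$
and in particular $\widehat{\varphi^\ell}(0)=\int_{\mathbb R}\varphi^\ell(x)\,dx$.

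Formula \eqref{eq:poisson sum} with $a$ replaced by $\sqrt n a$ and $h$ by $\delta_n$ gives
$$
\delta_n\sum_{k\in\mathbb Z}\varphi^\ell(x_{n,k})=\sum_{k\in\mathbb Z}\widehat{\varphi^\ell}\Big(\frac{2\pi k}{\delta_n}\Big)e^{-2\pi i k\sqrt n a/\delta_n}.
$$
The $k=0$ term is exactly $\int\varphi^\ell$. For the remaining terms we bound the phases by $1$ in modulus and use $2\pi k/\delta_n=2\pi k\sqrt n/h$. This yields the error bound
$$
\sum_{k\neq0}\Big|\widehat{\varphi^\ell}\Big(\frac{2\pi k\sqrt n}{h}\Big)\Big|\le C_\ell\sum_{k\ge1}2e^{-2\pi^2k^2n/(\ell h^2)}.
$$
Set $r=e^{-2\pi^2 n/(\ell h^2)}$. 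Since $k^2\ge k$, the sum is at most $2\sum_{k\ge1} r^k=2r/(1-r)$. For $n\ge1$ we have $r\le r_1:=e^{-2\pi^2/(\ell h^2)}<1$, so this is at most $2r/(1-r_1)=O(r)$, with the constant depending only on $\ell,h$. This proves \eqref{eq:gaussian-moment}.

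For \eqref{eq:normalizing-const}, take $\ell=1$ and note $\sum_k q_{n,k}=\delta_n\sum_k\varphi(x_{n,k})$ and $\int\varphi=1$.

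There is no real obstacle here. The only points needing care are bookkeeping the rescaled lattice parameters in \eqref{eq:poisson sum}, and the uniformity of the geometric-series bound in $n$ needed to absorb the tail sum into a single $O(\cdot)$ term.
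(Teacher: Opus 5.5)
Your proof is correct and follows essentially the same route as the paper. Both arguments apply Poisson summation on the rescaled lattice (base point $\sqrt n a$, span $\delta_n$), use the explicit transform $\widehat{\varphi^\ell}(t)=\ell^{-1/2}(2\pi)^{-(\ell-1)/2}e^{-t^2/(2\ell)}$, and use $k^2\ge k$ to bound the $k\neq 0$ terms by a geometric series; your lower bound $1-r\ge 1-e^{-2\pi^2/(\ell h^2)}$ for all $n\ge 1$ just makes explicit the uniformity in $n$ that the paper leaves implicit when it absorbs the denominator into the $O(\cdot)$.
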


\begin{proof}
It is clear that $\varphi^{\ell}(x)$ is a Schwartz function and its Fourier transform is 
$$
\widehat{\varphi^{\ell}}(t)=\int_{\mathbb R} \varphi^{\ell}(x)e^{itx}\,dx= \frac{\ell^{-1/2}}{(\sqrt{2\pi})^{\ell-1}}e^{-\frac{t^2}{2\ell}}.
$$
Apply identity \eqref{eq:poisson sum} to $\varphi^{\ell}(x)$, with $a$ and $h$ replaced by $\sqrt{n}a$ and $\delta_{n}$, respectively, to obtain
\begin{align*}
\delta_{n}\sum_{k\in\mathbb{Z}}\varphi^{\ell}
(x_{n,k})
&=\sum_{k\in \mathbb{Z}}\widehat{\varphi^\ell}\left(\frac{2\pi k}{\delta_{n}}\right)e^{-\frac{2\pi i k\sqrt{n}a}{\delta_{n}}}\\
&=\int_{\mathbb{R}} \varphi^{\ell}(x)\,dx+\frac{\ell^{-1/2}}{(\sqrt{2\pi})^{\ell-1}}\sum_{k\neq 0} e^{-\frac{2\pi^{2}k^{2}}{\ell\delta_{n}^{2}}}e^{-\frac{2\pi i k\sqrt{n}a}{\delta_{n}}}.
\end{align*}
Then
\begin{align*}
\left|\delta_{n}\sum_{k\in\mathbb{Z}}\varphi^{\ell}(x_{n,k})-\int_{\mathbb{R}} \varphi^{\ell}(x)\,dx\right| 
&\leq \frac{2\ell^{-1/2}}{(\sqrt{2\pi})^{\ell-1}}\sum_{k=1}^{\infty} e^{-\frac{2\pi^{2}k^{2}}{\ell\delta_{n}^{2}}}\leq \frac{2\ell^{-1/2}}{(\sqrt{2\pi})^{\ell-1}}\sum_{k=1}^{\infty} e^{-\frac{2\pi^{2}k}{\ell\delta_{n}^{2}}}\\
&\le \frac{2\ell^{-1/2}}{(\sqrt{2\pi})^{\ell-1}}\cdot\frac{e^{-2\pi^{2}/\ell \delta_n^2}}{1-e^{-2\pi^2/\ell \delta_n^2}}\\
&= \frac{2\ell^{-1/2}}{(\sqrt{2\pi})^{\ell-1}}\cdot \frac{e^{-2\pi^{2}n/\ell h^{2}}}{1-e^{-2\pi^{2}n/\ell h^{2}}}.
\end{align*} 
Thus we get 
$$\delta_{n}\sum_{k\in \mathbb{Z}}\varphi^{\ell}(x_{n,k})=\int_{\mathbb{R}} \varphi^{\ell}(x)\,dx+O\big(e^{-2\pi^2 n/\ell h^2}\big).
$$
This concludes the proof.
\end{proof}

\begin{rmk} 
One can check that the same argument yields identity \eqref{eq:gaussian-moment} for translations of $\varphi(x)$, that is for any $b \in \mathbb{R}$, we have
\begin{equation}\label{eq:translation}
\delta_{n}\sum_{k\in\mathbb{Z}}\varphi^{\ell}(x_{n,k}-b)=\int_{\mathbb{R}} \varphi^{\ell}(x)\,dx+O\big(e^{-2\pi^2 n/\ell h^2}\big).
\end{equation}
\end{rmk}

\begin{lem} \label{lem:gaussian-moment-a} 
For any polynomial $f(x)$ of degree at most $d\geq 0$, we have
\begin{equation*} 
\sum_{k\in\mathbb{Z}}f(x_{n,k})q_{n,k}=\int_{\mathbb{R}}f(x) \varphi(x)~dx+O(e^{-\pi^2n/h}).
\end{equation*}
\end{lem}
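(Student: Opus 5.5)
By linearity it suffices to treat the monomials $f(x)=x^m$ for $0\le m\le d$. The constants in the $O(\cdot)$ may then depend on the coefficients of $f$ and on $d$. The plan is to apply the Poisson summation formula \eqref{eq:poisson sum} to the Schwartz function $g_m(x)=x^m\varphi(x)$, with $a$ replaced by $\sqrt n\,a$ and $h$ by $\delta_n$, exactly as in the proof of \eqref{eq:gaussian-moment}. This gives
$$
\sum_{k\in\mathbb Z} x_{n,k}^m q_{n,k}=\delta_n\sum_{k\in\mathbb Z} g_m(x_{n,k})=\sum_{k\in\mathbb Z}\widehat{g_m}\Big(\frac{2\pi k}{\delta_n}\Big)e^{-\frac{2\pi i k\sqrt n a}{\delta_n}}.
$$
The $k=0$ term is $\widehat{g_m}(0)=\int_{\mathbb R}x^m\varphi(x)\,dx$, which is the desired main term.

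To bound the remaining terms, we compute $\widehat{g_m}$ explicitly. Since $\widehat\varphi(t)=e^{-t^2/2}$, differentiating under the integral gives $\widehat{g_m}(t)=(-i)^m\frac{d^m}{dt^m}e^{-t^2/2}=i^m He_m(t)e^{-t^2/2}$ up to sign conventions, where $He_m$ is the Hermite polynomial of degree $m$. Hence $|\widehat{g_m}(t)|\le C_m(1+|t|)^m e^{-t^2/2}$. For $k\ne0$ and $t_k=2\pi k\sqrt n/h$ this yields
$$
\Big|\sum_{k\ne0}\widehat{g_m}(t_k)e^{-\frac{2\pi ik\sqrt n a}{\delta_n}}\Big|\le 2C_m\sum_{k\ge1}(1+t_k)^m e^{-t_k^2/2}.
$$
The $k=1$ term is of size $n^{m/2}e^{-2\pi^2 n/h^2}$, and the later terms decay geometrically, so the whole tail is $O(n^{m/2}e^{-2\pi^2n/h^2})$.

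The main obstacle is matching the exponent $e^{-\pi^2 n/h}$ in the statement. The polynomial factor $n^{m/2}$ can be absorbed by sacrificing part of the exponent, since $n^{m/2}e^{-2\pi^2 n/h^2}=O(e^{-c n})$ for every $c<2\pi^2/h^2$. This gives the stated bound whenever $\pi^2/h<2\pi^2/h^2$, that is, whenever $h<2$. For general $h$ the natural conclusion is $O(e^{-cn})$ for any fixed $c<2\pi^2/h^2$. I would state the result in that form, presumably matching the intended exponent, and note that only exponential smallness is used downstream.

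Alternatively, one can avoid the explicit Hermite computation by bounding $\widehat{g_m}(t)$ via a contour shift $x\mapsto x+i t$. This gives $|\widehat{g_m}(t)|\le e^{-t^2/2}\int|x+it|^m\varphi(x)\,dx$, which yields the same estimate.
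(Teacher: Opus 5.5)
Your proposal is correct and follows the paper's proof: Poisson summation applied to $f\varphi$ on the lattice $\{x_{n,k}\}$, the Hermite formula $\widehat{x^r\varphi}(t)=i^rH_r(t)e^{-t^2/2}$, and a bound on the $k\neq0$ terms. The only cosmetic difference is that the paper absorbs the factor $(1+|2\pi k/\delta_n|^d)$ by splitting $e^{-2\pi^2k^2/\delta_n^2}$ into two halves, which gives exactly $O(e^{-\pi^2 n/h^2})$, a special case of your $O(e^{-cn})$ for every $c<2\pi^2/h^2$.

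You were also right to distrust the exponent in the statement. The paper's own proof ends with $O(e^{-\pi^2 n/h^2})$, so $h$ in place of $h^2$ is a typo. Moreover, for $h>2$ the bound $O(e^{-\pi^2 n/h})$ is false. For $f\equiv1$ the error equals $2e^{-2\pi^2 n/h^2}\cos(2\pi na/h)+O(e^{-8\pi^2 n/h^2})$, and the cosine cannot be small at both $n$ and $2n$.
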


\begin{proof}
Denote $f(x)=\sum_{r=0}^{d} c_{r}x^{r}$ and $g(x)=f(x)\varphi(x)$, then $g(x)$ is still a Schwartz function. Apply identity \eqref{eq:poisson sum} to $g(x)$, with $a$ and $h$ replaced by $\sqrt{n}a$ and $\delta_{n}$, respectively, and obtain 
$$
\delta_{n}\sum_{k\in\mathbb{Z}} g(x_{n,k})
=\sum_{k\in\mathbb{Z}}
\widehat g\left(\frac{2\pi k}{\delta_{n}}\right)e^{\frac{-2\pi i k \sqrt{n}a}{\delta_{n}}}.
$$
Hence
\begin{align}\label{eq:PI-1}
\sum_{k\in\mathbb{Z}}f(x_{n,k})q_{n,k}
&=\delta_{n} \sum_{k\in \mathbb{Z}} f(x_{n,k})\varphi(x_{n,k})
= \sum_{k\in \mathbb{Z}}\widehat g\left(\frac{2\pi k}{\delta_{n}}\right)e^{\frac{-2\pi i k \sqrt{n}a}{\delta_{n}}} \notag\\
&=\int_{\mathbb{R}} f(x)\varphi(x)\,dx+\sum_{k\neq 0}
\widehat g\left(\frac{2\pi k}{\delta_n}\right)e^{\frac{-2\pi i k \sqrt{n}a}{\delta_{n}}}.
\end{align}
Since $\widehat{\varphi}^{(r)}(t)=(-1)^{r}H_{r}(t)e^{-t^{2}/2}$ where $H_{r}(t)$ is the Chebyshev-Hermit polynomial of order $r$. Then 
$$
\widehat{x^{r}\varphi}(t)=\int_{\mathbb{R}} x^{r}\varphi(x)e^{itx}\,dx=(-i)^{r}\frac{d^{r}}{dt^{r}}\int_{\mathbb{R}} \varphi(x)e^{itx}\,dx=(-i)^{r}\widehat{\varphi}^{(r)}(t)=i^{r}H_{r}(t)e^{-t^{2}/2}.
$$
We can obtain 
\begin{equation}\label{eq:g-fourier}
\widehat g(t)=\widehat{f\varphi}(t)=\sum_{r=0}^{d}c_{r}\widehat{x^{r}\varphi}(t)=\sum_{r=0}^{d}i^{r}c_{r}H_{r}(t)e^{-t^{2}/2}.
\end{equation}
Denote $H(t)=\sum_{r=0}^{d}i^{r}c_{r}H_{r}(t)$ whose degree is at most $d$. Therefore there exist constant $C >0$ such that $|H(t)| \leq C(1+|t|^{d})$ for all $t\in\mathbb R$. Combine this with \eqref{eq:PI-1} and \eqref{eq:g-fourier} to obtain
\begin{align*}
\left|\sum_{k\in \mathbb{Z}} f(x_{n,k})q_{n,k}-\int_{\mathbb{R}} f(x)\varphi(x)\,dx\right|
&\leq \sum_{k\neq 0}\left|\widehat g\left(\frac{2\pi k}{\delta_n}\right)\right|\\
&\leq \sum_{k\neq 0}  C\left(1+\left|2\pi k/\delta_n\right|^{d}\right)e^{-\frac{2\pi^{2}k^{2}}{\delta_{n}^{2}}}\\
&=\sum_{k\neq 0}  C\left(1+\left|2\pi k/\delta_n\right|^{d}\right)e^{-\frac{\pi^{2}k^{2}}{\delta_{n}^{2}}}e^{-\frac{\pi^{2}k^{2}}{\delta_{n}^{2}}}\\
&\leq \widetilde C \sum_{k=1}^{\infty} e^{-\frac{\pi^{2}k}{\delta_{n}^{2}}}=\frac{\widetilde{C}e^{-\pi^2/\delta_n^2}}{1-e^{-\pi^2/\delta_n^2}}\\
&=\frac{\widetilde Ce^{-\pi^{2}n/h^{2}}}{1-e^{-\pi^{2}n/h}},
\end{align*}
where $\widetilde C =2C\sup_{x}(1+|2\pi x|^{d})e^{-\pi^{2}x^{2}} $. We get 
$$
\sum_{k\in\mathbb{Z}}f(x_{n,k})q_{n,k}=\int_{\mathbb{R}}f(x) \varphi(x)~dx+O\big(e^{-\pi^{2}n/h^{2}}\big).
$$
\end{proof}

\begin{lem} \label{lem:gaussian-moment-tail} 
For any integer $\ell\geq 0$, and large enough $M_{n}$, it holds that
$$
\left| \sum_{|x_{n,k}|\geq M_{n}}|x_{n,k}|^{\ell}q_{n,k}-\int_{|x|\geq M_{n}} |x|^{\ell}\varphi(x)\,dx\right|\leq 2\delta_{n}M_{n}^{\ell}\varphi(M_{n}).
$$
\end{lem}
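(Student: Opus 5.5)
The plan is a Riemann-sum comparison that exploits monotonicity of $g(x)=|x|^{\ell}\varphi(x)$ on the tails. By the symmetry $x\mapsto -x$ of $g$, it suffices to treat the right tail $\{x_{n,k}\ge M_n\}$ against $\int_{M_n}^\infty g$ and show that the error there is at most $\delta_n M_n^\ell\varphi(M_n)$. The left tail is handled identically, because the lattice $-\mathcal L_n$ is again a shifted lattice with span $\delta_n$. Adding the two errors gives the factor $2$.

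First I would note that $g'(x)=(\ell x^{\ell-1}-x^{\ell+1})\varphi(x)$. Hence $g$ is nonincreasing on $[\sqrt{\ell},\infty)$, and "large enough $M_n$" will mean $M_n\ge\sqrt{\ell}$. Recall that $q_{n,k}=\delta_n\varphi(x_{n,k})$, so the sum equals $\delta_n\sum_{x_{n,k}\ge M_n} g(x_{n,k})$.

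Let $x_{n,k_0}$ be the smallest lattice point with $x_{n,k_0}\ge M_n$, so that $M_n\le x_{n,k_0}<M_n+\delta_n$. The points $x_{n,k_0+j}=x_{n,k_0}+j\delta_n$ for $j\ge0$ cut $[x_{n,k_0},\infty)$ into cells of length $\delta_n$. Since $g$ is decreasing there, each cell satisfies
\[
\delta_n g(x_{n,k+1})\le\int_{x_{n,k}}^{x_{n,k+1}}g\le \delta_n g(x_{n,k}).
\]
Summing over $k\ge k_0$ gives
\[
\int_{x_{n,k_0}}^\infty g\le \delta_n\sum_{k\ge k_0}g(x_{n,k})\le \delta_n g(x_{n,k_0})+\int_{x_{n,k_0}}^\infty g.
\]
It remains to account for the gap $[M_n,x_{n,k_0}]$. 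Its length is less than $\delta_n$ and $g\le g(M_n)$ on it, so $0\le\int_{M_n}^{x_{n,k_0}}g\le\delta_n g(M_n)$.

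Combining these bounds, the sum minus $\int_{M_n}^\infty g$ lies between $-\delta_n g(M_n)$ and $\delta_n g(x_{n,k_0})\le\delta_n g(M_n)$. So its absolute value is at most $\delta_n M_n^\ell\varphi(M_n)$, and the two tails together give the stated bound.

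The only delicate point is the bookkeeping at the boundary cell, which is where the partial gap to $M_n$ occurs. The two-sided sandwich shows that the boundary terms never add up to more than a single $\delta_n g(M_n)$ per tail. Everything else is standard monotone comparison. The condition $M_n\ge\sqrt\ell$ is the precise meaning of "large enough."
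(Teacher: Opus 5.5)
Your proposal is correct and follows essentially the same route as the paper. Both arguments compare the tail sum of $g(x)=|x|^{\ell}\varphi(x)$ with $\int_{M_n}^\infty g$ on the cells cut out by the first lattice point $x_{n,k_0}\ge M_n$, bound the boundary gap by $\delta_n g(M_n)$ on each side, and treat the left tail symmetrically. Your explicit threshold $M_n\ge\sqrt{\ell}$ is exactly what the paper's phrase ``large enough'' requires.
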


\begin{proof}
We first consider the case $x_{n,k}\geq M_{n}$ for sufficiently large $M_n$ and choose $k_0$ such that
$$
x_{n,k_0-1}<M_n\leq x_{n,k_0}.
$$
Denote $g(x)=|x|^\ell \varphi(x).$ For $x\geq M_n $, $g(x)=x^l \varphi(x),$ since $M_n$ is large enough, we have
$$
g'(x)=\ell x^{\ell-1}\varphi(x)-x^{\ell+1}\varphi(x) =x^{\ell-1}\varphi(x)(\ell-x^2)<0.
$$
Thus $g(x)$ is decreasing on $[M_n,\infty)$. We have
$$
\int_{M_n}^{\infty} g(x)\,dx
=\int_{M_n}^{x_{n,k_0}} g(x)\,dx
+\sum_{k=k_0}^{\infty}\int_{x_{n,k}}^{x_{n,k+1}} g(x)\,dx.
$$
 For any $k\geq k_{0}$, by the the monotonicity of $g$, we obtain 
$$
g(x_{n,k+1})\delta_n\leq \int_{x_{n,k}}^{x_{n,k+1}} g(x)\,dx\leq g(x_{n,k})\delta_n.
$$
Thus 
$$
\int_{M_n}^{\infty} g(x)\,dx\leq g(M_{n})\delta_{n}+\sum_{k=k_0}^{\infty}g(x_{n,k})\delta_n,
$$
and
\begin{align*}
\int_{M_n}^{\infty} g(x)\,dx &\geq \int_{M_n}^{x_{n,k_0}} g(x)\,dx +\sum_{k=k_0}^{\infty}g(x_{n,k})\delta_n-g(x_{n,k_{0}})\delta_{n}\\
&\geq -g(M_{n})\delta_n+\sum_{k=k_0}^{\infty}g(x_{n,k})\delta_n. 
\end{align*}
Hence
$$
\left|\sum_{x_{n,k}\geq M_{n}} g(x_{n,k})\delta_n-\int_{M_n}^{\infty} g(x)\,dx\right|\leq g(M_n)\delta_n=\delta_{n}M_{n}^{l}\varphi(M_{n}) . 
$$
Applying a similar argument for $x_{n,k} \leq -M_{n}$, we obtain 
\begin{align*}
\left|\sum_{x_{n,k}\leq -M_{n}} g(x_{n,k})\delta_n-\int_{-\infty}^{-M_n} g(x)\,dx\right|\leq\delta_{n}M_{n}^{l}\varphi(M_{n}).
\end{align*}
\end{proof}


\section{sub-Gaussianity} \label{sec:sub-Gaussianity}

Recall that $X_{1},\cdots, X_{n}$ are independent copies of a lattice random variable $X$ with zero mean, unit variance, and maximal span $h>0$, and $S_{n}=(X_{1}+\cdots+X_{n})/\sqrt{n}$ is their normalized sum. 
The objective  of this section is to show that finite  R\'enyi divergence $D_\alpha(S_n\|Z_n)$ implies that $X$ and $S_n$ are sub-Gaussian. Recall that
$$
D_\alpha(S_n\|Z_n)=\frac{1}{\alpha-1}\log \sum_{k \in \mathbb{Z}} \frac{p_{n, k}^\alpha}{q_{n, k}^{\alpha-1}}+\log \left(\sum_{k\in\mathbb Z}q_{n, k}\right).
$$
By virtue of the identity in  \eqref{eq:normalizing-const}, the condition $D_\alpha(S_n\|Z_n)<\infty$ is equivalent to the summability condition
$$
A_{n,\alpha}:=\sum_{k \in \mathbb{Z}}\frac{p_{n, k}^\alpha}{q_{n, k}^{\alpha-1}}<\infty.
$$

\begin{prop} \label{prop:sub-Gaussian-a} 
Let $\alpha >1$ and $\beta =\alpha/(\alpha-1)$. Suppose $D_{\alpha}(S_{n}||Z_{n}) < \infty$ for some $n$. For all $c<1/(2\beta)$, we have
$$
\mathbb{E} e^{cX^{2}}< \infty \quad \text{and} \quad \mathbb{E} e^{cS_{n}^{2}}< \infty.
$$
\end{prop}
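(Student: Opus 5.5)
The plan is to use H\"older's inequality to control $\mathbb E e^{cS_n^2}$ through $A_{n,\alpha}$, and then to transfer sub-Gaussianity from $S_n$ to a single summand $X$.

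For the estimate on $S_n$, write
$$
\mathbb E e^{cS_n^2}=\sum_k p_{n,k}e^{cx_{n,k}^2}=\sum_k \frac{p_{n,k}}{q_{n,k}^{1/\beta}}\cdot q_{n,k}^{1/\beta}e^{cx_{n,k}^2}.
$$
H\"older's inequality with exponents $\alpha$ and $\beta$ bounds this by
$$
\Big(\sum_k \frac{p_{n,k}^\alpha}{q_{n,k}^{\alpha/\beta}}\Big)^{1/\alpha}\Big(\sum_k q_{n,k}e^{c\beta x_{n,k}^2}\Big)^{1/\beta}.
$$
Since $\alpha/\beta=\alpha-1$, the first factor equals $A_{n,\alpha}^{1/\alpha}<\infty$. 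The second sum is $\delta_n(2\pi)^{-1/2}\sum_k e^{-(1-2c\beta)x_{n,k}^2/2}$. Because $c\beta<1/2$, this is a Riemann sum of an integrable Gaussian over a lattice of span $\delta_n$, so it is finite. One can bound it directly by comparison with the integral plus a constant, or by \eqref{eq:translation}-type Poisson summation with rescaled variance. This gives $\mathbb E e^{cS_n^2}<\infty$ for every $c<1/(2\beta)$.

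To pass to $X$, write $\sqrt n S_n=X_1+(X_2+\cdots+X_n)$ and use independence. Choose a point $y_0$ with $\mathbb P(X_2+\cdots+X_n=y_0)=p_0>0$. Then
$$
\mathbb E e^{cS_n^2}\ge p_0\,\mathbb E e^{c(X+y_0)^2/n}.
$$
This only yields $\mathbb E e^{(c/n)X^2}<\infty$, which is too weak. The better route is to use Gaussian-type tails instead. Sub-Gaussianity of $S_n$ with constant $c$ gives $\mathbb P(X_1+\cdots+X_n\ge \sqrt n r)\le Ce^{-cr^2}$. Since the $X_i$ are i.i.d., and the partial sum $X_2+\cdots+X_n$ is nonnegative with probability bounded below, one might hope to compare $\mathbb P(X\ge u)$ with $\mathbb P(X_1+\cdots+X_n\ge u)$. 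But this again loses the factor $n$: we get $\mathbb P(X\ge u)\lesssim e^{-cu^2/n}$.

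The correct mechanism is the event that all $n$ summands are large simultaneously. We have
$$
\mathbb P(X\ge u)^n\le \mathbb P(X_1+\cdots+X_n\ge nu)=\mathbb P(S_n\ge \sqrt n u)\le Ce^{-cnu^2},
$$
so $\mathbb P(X\ge u)\le C^{1/n}e^{-cu^2}$. The same argument applies to the lower tail. Then for any $c'<c$,
$$
\mathbb E e^{c'X^2}=1+\int_0^\infty 2c'u e^{c'u^2}\,\mathbb P(|X|\ge u)\,du<\infty.
$$
Since $c<1/(2\beta)$ is arbitrary, $c'$ covers every value below $1/(2\beta)$.

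The main obstacle is this transfer step: naive conditioning arguments lose a factor of $n$ in the exponent. The product-of-tails trick (all summands exceeding $u$) is what preserves the constant, and I would verify that it handles both signs and the lattice shift $a$ without difficulty. The Riemann-sum finiteness is routine.
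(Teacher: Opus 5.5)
Your proof is correct, but it runs in the opposite direction from the paper's.

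\textbf{The paper's route.} The paper proves the bound for $X$ first. It uses the atom-level inequality $p_k^n\le p_{n,nk}$: if every $X_i$ equals $a+kh$, then $S_n=x_{n,nk}$. This gives $\sum_k p_k^{n\alpha}q_{n,nk}^{1-\alpha}\le A_{n,\alpha}$. The paper then applies H\"older with exponent $\gamma=n\alpha$ to $\sum_k p_k e^{x_k^2/2\beta}\cdot e^{(c-1/2\beta)x_k^2}$. The point is that $e^{\gamma x_k^2/2\beta}=e^{n(\alpha-1)x_k^2/2}$ equals $q_{n,nk}^{1-\alpha}$ up to a constant, because $x_{n,nk}=\sqrt n\,x_k$. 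The statement for $S_n$ then follows from $(X_1+\cdots+X_n)^2/n\le X_1^2+\cdots+X_n^2$, i.e.\ $\mathbb E e^{cS_n^2}\le(\mathbb E e^{cX^2})^n$.

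\textbf{Your route.} You get $S_n$ directly by H\"older with exponents $(\alpha,\beta)$, the same computation used in Proposition~\ref{prop:sub-Gaussian-b}. You then transfer to $X$ via $\mathbb P(X\ge u)^n\le\mathbb P(S_n\ge\sqrt n\,u)\le \mathbb E e^{cS_n^2}\,e^{-cnu^2}$, plus the layer-cake formula.

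\textbf{Comparison.} Your transfer step is the tail analogue of the paper's atom inequality. Both exploit the event that all summands are simultaneously large (or equal), which is what keeps $c$ from being divided by $n$. Yours does not use the lattice structure at all and works for arbitrary i.i.d.\ summands. Its cost is passing from $c$ to some $c'<c$, which is harmless because the range $c<1/(2\beta)$ is open. The paper's route reaches every $c<1/(2\beta)$ for $X$ in one step, with an explicit bound in terms of $A_{n,\alpha}$. It also uses only the sub-sum of $A_{n,\alpha}$ over the atoms $x_{n,nk}$.

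In a final write-up you can drop the two abandoned conditioning attempts, since only the product-of-tails argument is needed.
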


\begin{proof}
We denote by $\{p_k\}_{k\in\mathbb Z}$ the distribution of $X$, that is, $p_k=\mathbb P\{X=x_k\}$ where $x_k=a+kh$. Clearly, we have $p_k^n\le p_{n, nk}$ for all $k\in\mathbb Z$ and therefore
\begin{equation}\label{eq:1st factor}
\sum_{k\in\mathbb Z}p_k^{n\alpha}q_{n,nk}^{1-\alpha}\le \sum_{k\in\mathbb Z}p_{n, nk}^{\alpha}q_{n,nk}^{1-\alpha}\le A_{n, \alpha}<\infty.
\end{equation}
Apply H\"older's inequality with $\gamma=n\alpha$ and $\gamma'=\gamma/(\gamma-1)$ to obtain
\begin{align*}
\mathbb E e^{cX^2} &=\sum_{k\in\mathbb Z}p_k e^{cx_k^2}=\sum_{k\in\mathbb Z}p_k e^{x_k^2/2\beta}\cdot e^{(c-1/2\beta)x_k^2}\\
&\leq \left[\sum_{k\in\mathbb Z}p_k^\gamma e^{\gamma x_k^2/2\beta}\right]^{1/\gamma}\left[\sum_{k\in\mathbb Z}e^{-\gamma'(1/2\beta-c)x_k^2}\right]^{1/\gamma'}\\
&=\left[\sum_{k\in\mathbb Z}p_k^{n\alpha}e^{n(\alpha-1) (a+kh)^2/2}\right]^{1/n\alpha}\left[\sum_{k\in\mathbb Z}e^{-\gamma'(1/2\beta-c)(a+kh)^2}\right]^{1/\gamma'}\\
&=\left[C_n\sum_{k\in\mathbb Z}p_k^{n\alpha}q_{n, nk}^{1-\alpha}\right]^{1/n\alpha}\left[\sum_{k\in\mathbb Z}e^{-\gamma'(1/2\beta-c)(a+kh)^2}\right]^{1/\gamma'}<\infty,
\end{align*}
where $C_{n}=(\sqrt{2\pi n}/h)^{1-\alpha}$. The first factor is finite due to \eqref{eq:1st factor}. The second factor is also finite since $c<1/(2\beta)$. As a consequence, we have
\begin{align*}
\mathbb E e^{cS_n^2} &= \mathbb E e^{c(X_1+\cdots+X_n)^2/n}\le \mathbb E e^{c(X_1^2+\cdots+X_n^2)}=\big(\mathbb E e^{cX^2}\big)^n<\infty.
\end{align*}
\end{proof}

\begin{prop} \label{prop:sub-Gaussian-b}
Let $\alpha >1$ and $\beta =\alpha/(\alpha-1)$. Suppose $D_{\alpha}(S_{n}||Z_{n}) < \infty$ for some $n$. There exists a constant $C>0$ depending on $h$ such that for all $t\in \mathbb R$ we have
$$
\mathbb{E} e^{tS_{n}}\leq C A_{n,\alpha}^{1/\alpha}~e^{\beta t^{2}/2} \quad \text{and} \quad \mathbb{E} e^{tX}\leq (C A_{n,\alpha}^{1/\alpha})^{1/n}~e^{\beta t^{2}/2}.
$$
\end{prop}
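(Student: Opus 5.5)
We would prove the bound on $\mathbb E e^{tS_n}$ by a single application of H\"older's inequality with exponents $\alpha$ and $\beta=\alpha/(\alpha-1)$. The weights $p_{n,k}^\alpha q_{n,k}^{1-\alpha}$ appearing in $A_{n,\alpha}$ are exactly what this produces. Concretely, write
$$
\mathbb E e^{tS_n}=\sum_{k\in\mathbb Z} p_{n,k}\,e^{tx_{n,k}}=\sum_{k\in\mathbb Z}\frac{p_{n,k}}{q_{n,k}^{1/\beta}}\cdot q_{n,k}^{1/\beta}e^{tx_{n,k}} ,
$$
and note that $(1-\alpha)/\alpha=-1/\beta$. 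Then H\"older gives
$$
\mathbb E e^{tS_n}\le\Big(\sum_k p_{n,k}^\alpha q_{n,k}^{1-\alpha}\Big)^{1/\alpha}\Big(\sum_k q_{n,k}e^{\beta t x_{n,k}}\Big)^{1/\beta}=A_{n,\alpha}^{1/\alpha}\Big(\sum_k q_{n,k}e^{\beta t x_{n,k}}\Big)^{1/\beta}.
$$

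The second factor is a quantized Gaussian Laplace transform, which we complete the square in. Since $\varphi(x)e^{\beta t x}=e^{\beta^2t^2/2}\varphi(x-\beta t)$, we have
$$
\sum_k q_{n,k}e^{\beta t x_{n,k}}=e^{\beta^2t^2/2}\,\delta_n\sum_k\varphi(x_{n,k}-\beta t).
$$
Raising this to the power $1/\beta$ yields $e^{\beta t^2/2}\big(\delta_n\sum_k\varphi(x_{n,k}-\beta t)\big)^{1/\beta}$. It therefore remains to bound $\delta_n\sum_k\varphi(x_{n,k}-b)$ uniformly in $b\in\mathbb R$.

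The uniform bound on the shifted lattice sum is the only real point. We would use the translated Poisson estimate \eqref{eq:translation} with $\ell=1$. Its error term $O(e^{-2\pi^2n/h^2})$ is uniform in $b$, because the Poisson argument bounds the error by $2\sum_{k\ge1}e^{-2\pi^2k^2/\delta_n^2}$, in which the phase factors have modulus one. This gives $\delta_n\sum_k\varphi(x_{n,k}-b)\le 1+2\sum_{k\ge1}e^{-2\pi^2k^2 n/h^2}\le 1+2\sum_{k\ge1}e^{-2\pi^2k/h^2}$, a constant depending only on $h$ and valid for all $n\ge1$.

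As a fallback that avoids Poisson summation entirely, we would use an elementary unimodality bound. For a unimodal function with maximum $1/\sqrt{2\pi}$, a Riemann sum with mesh $\delta_n$ is at most $\int\varphi+2\delta_n\sup\varphi\le 1+2h/\sqrt{2\pi}$. Either route gives a constant $C$ depending only on $h$ (up to the harmless power $1/\beta$), and hence $\mathbb E e^{tS_n}\le C A_{n,\alpha}^{1/\alpha}e^{\beta t^2/2}$.

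The statement for $X$ then follows by independence. We have $\mathbb E e^{tS_n}=\big(\mathbb E e^{(t/\sqrt n)X}\big)^n$. Substituting $t=\sqrt n\, s$ gives $(\mathbb E e^{sX})^n\le CA_{n,\alpha}^{1/\alpha}e^{\beta n s^2/2}$, and taking $n$-th roots yields $\mathbb E e^{sX}\le (CA_{n,\alpha}^{1/\alpha})^{1/n}e^{\beta s^2/2}$ for all $s\in\mathbb R$. The finiteness of $A_{n,\alpha}$ comes from $D_\alpha(S_n\|Z_n)<\infty$ together with \eqref{eq:normalizing-const}, as noted at the start of this section.
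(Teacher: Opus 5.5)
Your proposal is correct and follows the paper's proof: Hölder with exponents $\alpha,\beta$ via the factorization $p_{n,k}q_{n,k}^{-1/\beta}\cdot q_{n,k}^{1/\beta}e^{tx_{n,k}}$, completing the square, and bounding the shifted lattice sum with \eqref{eq:translation} at $\ell=1$, $b=\beta t$. The bound for $X$ then comes from $\mathbb E e^{tX}=(\mathbb E e^{\sqrt n\,tS_n})^{1/n}$. Your explicit remark that the Poisson error is uniform in $b$, and the elementary unimodality fallback, are welcome clarifications of a point the paper leaves implicit.
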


\begin{proof}
By H\"older's inequality, we have 
\begin{align*}
\mathbb E e^{tS_n}&=\sum_{k\in \mathbb{Z}} p_{n,k} e^{t x_{n,k}} =\sum_{k\in \mathbb{Z}}\frac{p_{n,k}}{q_{n,k}^{1/\beta}}\cdot q_{n,k}^{1/\beta}e^{t x_{n,k}} \\
&\leq\left[\sum_{k\in \mathbb{Z}}\frac{p_{n,k}^{\alpha}}{q_{n,k}^{\alpha-1}}\right]^{1/\alpha}\left[\sum_{k\in \mathbb{Z}} q_{n,k} e^{t\beta x_{n,k}}\right]^{1/\beta}\\
&=A_{n,\alpha}^{1/\alpha}\left[\frac{\delta_{n}}{\sqrt{2\pi}}\sum_{k\in \mathbb{Z}}e^{-(x_{n,k}-t\beta)^2/2}\right]^{1/\beta}e^{\beta t^2/2}.
\end{align*}
By \eqref{eq:translation} with $\ell=1$ and $b=t\beta$, we have
$$
\frac{\delta_{n}}{\sqrt{2\pi}}\sum_{k\in \mathbb{Z}}e^{-(x_{n,k}-t\beta)^2/2}= 1+O\big(e^{-2\pi^2 n/h^{2}}\big).
$$
Put these together to obtain the first inequality. The second one follows from the identity
$$
\mathbb{E} e^{tX}=\big(\mathbb{E} e^{\sqrt n tS_n})^{1/n}.
$$
\end{proof}

Assuming $D_{\alpha}(S_{n}||Z_{n})<\infty$, we can actually sharpen Proposition \ref{prop:sub-Gaussian-b} for large values of $t$.

\begin{prop} \label{prop:sub-Gaussian-large t}
Let $\alpha >1$ and $\beta =\alpha/(\alpha-1)$. Suppose $D_{\alpha}(S_{n}||Z_{n}) < \infty$ for some $n$. Then we have 
$$
\lim_{|t|\to\infty } \mathbb E e^{tS_{n}}e^{-\beta t^{2}/2}=0 \quad \text{and} \quad \lim_{|t|\to\infty } \mathbb E e^{tX}e^{-\beta t^{2}/2}=0.
$$
\end{prop}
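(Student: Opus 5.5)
Our plan is to refine the H\"older argument from the proof of Proposition \ref{prop:sub-Gaussian-b}, this time splitting off the tail of the series $A_{n,\alpha}$. Fix $n$ with $A_{n,\alpha}<\infty$. By the same H\"older step as before,
\begin{align*}
\mathbb E e^{tS_n}e^{-\beta t^2/2}\le \Big[\sum_{k} \frac{p_{n,k}^\alpha}{q_{n,k}^{\alpha-1}}\cdot w_k(t)\Big]^{1/\alpha}\quad\text{with weights to be chosen}.
\end{align*}
In practice, we will split the sum $\sum_k p_{n,k}e^{tx_{n,k}}$ into the region $|x_{n,k}-\beta t|\le R$ and its complement, for a parameter $R>0$, and apply H\"older separately on each piece.

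On the complement $\{|x_{n,k}-\beta t|>R\}$, H\"older gives the bound
$$
A_{n,\alpha}^{1/\alpha}\Big[\frac{\delta_n}{\sqrt{2\pi}}\sum_{|x_{n,k}-\beta t|>R}e^{-(x_{n,k}-\beta t)^2/2}\Big]^{1/\beta}e^{\beta t^2/2}.
$$
The bracket is a tail of a shifted quantized Gaussian. Using the monotonicity argument of Lemma \ref{lem:gaussian-moment-tail}, applied to the translated Gaussian with $\ell=0$, it is at most $2\int_R^\infty\varphi+2\delta_n\varphi(R)$, uniformly in $t$. Hence this piece contributes at most $\varepsilon(R)e^{\beta t^2/2}$, where $\varepsilon(R)\to0$ as $R\to\infty$, uniformly in $t$.

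On the region $\{|x_{n,k}-\beta t|\le R\}$, H\"older gives
$$
\Big[\sum_{|x_{n,k}-\beta t|\le R}\frac{p_{n,k}^\alpha}{q_{n,k}^{\alpha-1}}\Big]^{1/\alpha}\cdot C^{1/\beta}e^{\beta t^2/2},
$$
where the second factor is bounded by \eqref{eq:translation}. For fixed $R$, as $|t|\to\infty$ this window lies inside $\{|x_{n,k}|\ge \beta|t|-R\}\to$ infinity. The first factor is therefore a tail of the convergent series $A_{n,\alpha}$, and it tends to $0$. Letting $|t|\to\infty$ and then $R\to\infty$ gives $\limsup_{|t|\to\infty}\mathbb E e^{tS_n}e^{-\beta t^2/2}\le \varepsilon(R)^{1/\beta}\to 0$, which proves the first claim.

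For $X$, we use the identity $\mathbb E e^{tX}=(\mathbb E e^{\sqrt n tS_n})^{1/n}$:
$$
\mathbb E e^{tX}e^{-\beta t^2/2}=\big(\mathbb E e^{(\sqrt n t)S_n}e^{-\beta (\sqrt n t)^2/2}\big)^{1/n}.
$$
Since $|\sqrt n t|\to\infty$, the first claim applies and the right side tends to $0$. The only delicate point is showing that the tail bound for the shifted quantized Gaussian is uniform in the shift $\beta t$. This is the step that needs care, and the monotone comparison with the integral handles it, because the estimate depends only on $R$ and $\delta_n$.
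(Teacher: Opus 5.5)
Your proof is correct. It uses the same two ingredients as the paper, but organizes the decomposition differently. The shared ingredients are H\"older's inequality against the convergent series $A_{n,\alpha}$, and the uniform bound \eqref{eq:translation}, which gives $\sum_k q_{n,k}e^{\beta t x_{n,k}}\le C e^{\beta^2t^2/2}$.

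For $t>0$, the paper cuts at $x_{n,k}=0$ and $x_{n,k}=\beta t/2$:
\begin{itemize}
\item On the negative half-line the sum is bounded trivially by $1$.
\item On $0<x_{n,k}\le\beta t/2$ it uses the full $A_{n,\alpha}$ together with the pointwise bound $-x^2/2+\beta t x\le 3\beta^2t^2/8$. After normalizing, this gives an explicit factor $e^{-\beta t^2/8}$ times a polynomial.
\item On $x_{n,k}>\beta t/2$ it uses the tail of $A_{n,\alpha}$ times the full shifted Gaussian sum.
\end{itemize}
The result is a quantitative bound $e^{-\beta t^2/2}+A_{n,\alpha}^{1/\alpha}(\beta t/2+\delta_n)^{1/\beta}e^{-\beta t^2/8}+C\eta(t)$. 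It needs no double limit and no Gaussian tail estimate uniform in the shift, only a pointwise maximization. The case $t<0$ is handled ``by the same argument.''

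Your fixed-width window $|x_{n,k}-\beta t|\le R$ isolates exactly where the tilted quantized Gaussian concentrates. The price is a tail bound for the shifted quantized Gaussian that is uniform in $t$. You handle this correctly by monotone comparison, and it depends only on $R$ and the fixed $\delta_n$. You also need a $\limsup$ in $t$ followed by $R\to\infty$. In exchange, your argument treats $t\to+\infty$ and $t\to-\infty$ at once, and it is arguably more transparent about why the statement holds. In effect, the paper's cut at distance $\beta t/2$ from the tilt center is a cutoff that grows with $t$, which is what buys its explicit rate.

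One cosmetic slip: you defined $\varepsilon(R)$ as the full off-window contribution, already including $A_{n,\alpha}^{1/\alpha}$ and the power $1/\beta$. With that definition the conclusion should read $\limsup_{|t|\to\infty}\mathbb E e^{tS_n}e^{-\beta t^2/2}\le\varepsilon(R)$, not $\varepsilon(R)^{1/\beta}$. Either way the bound tends to $0$ as $R\to\infty$.
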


\begin{proof}
We first consider the case of $t>0$ and decompose
\begin{align}\label{eq:mgf-sn-large t}
\mathbb E e^{tS_n}=\sum_{k \in \mathbb Z} p_{n,k}e^{t x_{n,k}} &=\sum_{x_{n,k}\le 0} p_{n,k}e^{t x_{n,k}}+\sum_{0<x_{n,k}\le \beta t/2} p_{n,k}e^{t x_{n,k}}+\sum_{x_{n,k}>\beta t/2} p_{n,k}e^{t x_{n,k}} \notag\\
&:=I_1(t)+I_2(t)+I_3(t).
\end{align}
Since $t>0$, we have
\begin{equation}\label{eq:I_1-limit}
I_1(t)=\sum_{x_{n,k}\le 0} p_{n,k}e^{t x_{n,k}} \leq \sum_{x_{n,k}\le 0} p_{n,k}\leq 1.
\end{equation}	
For $I_2(t)$, by H\"older's inequality, we 
\begin{align}
I_2(t) 
&=\sum_{0<x_{n,k}\le \beta t/2}\frac{p_{n,k}}{q_{n,k}^{1/\beta}}\cdot q_{n,k}^{1/\beta} e^{t x_{n,k}} \notag\\
&\leq\left[\sum_{0<x_{n,k}\le \beta t/2}\frac{p_{n,k}^{\alpha}}{q_{n,k}^{\alpha-1}}\right]^{1/\alpha}\left[\sum_{0<x_{n,k}\le \beta t/2}q_{n,k} e^{\beta t x_{n,k}}\right]^{1/\beta} \notag\\
&\leq A_{n,\alpha}^{1/\alpha}\left[\frac{\delta_n}{\sqrt{2\pi}}\sum_{0<x_{n,k}\le \beta t/2}
e^{-x_{n,k}^2/2+\beta t x_{n,k}}\right]^{1/\beta} \notag\\
&\leq A_{n,\alpha}^{1/\alpha}\left[\frac{\delta_n}{\sqrt{2\pi}}\sum_{0<x_{n,k}\le \beta t/2}
e^{3\beta t^2/8}\right]^{1/\beta}\notag\\
&\le A_{n,\alpha}^{1/\alpha} \left[\frac{\delta_n}{\sqrt{2\pi}}\left(\frac{\beta t}{2\delta_{n}}+1\right)e^{3\beta^{2} t^2/8}\right]^{1/\beta}\notag\\
&\leq A_{n,\alpha}^{1/\alpha}\left(\beta t/2+\delta_{n}\right)^{1/\beta}e^{3\beta t^2/8}\label{eq:I_2-limit}. 
\end{align}
For $I_3(t)$, again by H\"older's inequality,
\begin{align*}
I_3(t)=\sum_{x_{n,k}>\beta t/2}\frac{p_{n,k}}{q_{n,k}^{1/\beta}}\cdot q_{n,k}^{1/\beta}e^{t x_{n,k}} \leq\left[\sum_{x_{n,k}>\beta t/2}\frac{p_{n,k}^{\alpha}}{q_{n,k}^{\alpha-1}}\right]^{1/\alpha}\left[\sum_{x_{n,k}>\beta t/2}q_{n,k}e^{\beta t x_{n,k}}\right]^{1/\beta}.
\end{align*}
Since $A_{n,\alpha}<\infty$, we have
$$
\eta^{\alpha}(t):=\sum_{x_{n,k}>\beta t/2}p_{n,k}^{\alpha}q_{n,k}^{1-\alpha}\to 0 \quad \text{as}~t\to\infty.
$$
By \eqref{eq:translation} with $\ell=1$ and $b=t\beta$, there exists a constant $C>0$ depending on $h$ such that  
\begin{align*}
\sum_{x_{n,k}>\beta t/2}q_{n,k}e^{\beta t x_{n,k}} \leq \sum_{k \in \mathbb{Z}} q_{n,k}e^{\beta t x_{n,k}}
=\frac{\delta_{n}}{\sqrt{2\pi}}\sum_{k\in \mathbb{Z}}e^{-(x_{n,k}-t\beta)^2/2}e^{\beta^2 t^2/2}
\le C^{\beta} e^{\beta^2 t^2/2}.  
\end{align*}
Put these together and obtain
\begin{equation}\label{eq:I_3-limit}
I_3(t)\le C \eta(t)e^{\beta t^2/2}.
\end{equation}
Combine \eqref{eq:mgf-sn-large t}, \eqref{eq:I_1-limit}, \eqref{eq:I_2-limit}, \eqref{eq:I_3-limit} to obtain
$$
\mathbb E e^{tS_n}e^{-\beta t^2/2}\leq e^{-\beta t^2/2}+A_{n,\alpha}^{1/\alpha}\left(\beta t/2+\delta_{n}\right)^{1/\beta}e^{-\beta t^2/8}+C\eta(t)\to 0 \quad \text{as}~t\to\infty.
$$
By the same argument, we can obtain
$$
\lim_{t\to-\infty}\mathbb E e^{tS_n}e^{-\beta t^2/2}=0.
$$
The other part of the statement follows from the identity
$$
\mathbb E e^{tX}e^{-\beta t^2/2}=\big[\mathbb E e^{\sqrt n tS_{n_0}}e^{-\beta nt^2/2}\big]^{1/n}.
$$
\end{proof}

\begin{rmk} \label{remark-Proposition4.1-4.3}
Let $Y$ be a lattice-valued random variable and let $Z$ be the corresponding quantized Gaussian random variable. Assume that $D_\alpha(Y\|Z)<\infty$ for some $\alpha>1$. Then the statements of Propositions \ref{prop:sub-Gaussian-a}, \ref{prop:sub-Gaussian-b} and \ref{prop:sub-Gaussian-large t} hold with $S_n$ and $Z_n$ replaced by $Y$ and $Z$, respectively. 
\end{rmk}


\section{Enhanced sub-Gaussianity}\label{sec:enhanced sub-Gaussianity}

Let $n_0$ be a positive integer. For a lattice random variable $Y \in \mathcal{L}_{n_{0}}$ satisfying $D_{\alpha}(Y||Z_{n_{0}})<\infty$, we have already established that $\mathbb{E} e^{cY^{2}}<\infty$ for every  $c<1/(2\beta)$. In general, however, this conclusion fails at the critical threshold $c=1/(2\beta)$. The goal of this section is to demonstrate that the normalized sum of sufficiently many independent copies of  $Y$ does exhibit the desired sub-Gaussian behavior even in this critical case.

For $1\leq \alpha <\infty$, we denote by $\ell^{\alpha}(\mathcal{L}_{n})$ the family of functions $f:\mathcal{L}_{n}\to \mathbb{R}$ with finite $\alpha$-norm $\|f\|_{\alpha}=\left(\sum_{k\in \mathbb{Z}}|f(x_{n,k})|^{\alpha}\right)^{1/\alpha}$. In analogy with the heat semigroup, we define the discrete Gaussian smoothing operator $T_{n,t}$ on $ \ell^{\alpha}(\mathcal{L}_{n})$ as follows. For any $t>0$, $f\in \ell^{\alpha}(\mathcal{L}_{n})$ and $y\in \mathcal{L}_{n}$, we define 
$$
T_{n,t}f(y)=\frac{1}{C_{n,t}}\sum_{x\in \mathcal{L}_{n}}e^{-(y-x)^2/(2t)}f(x),
$$
where $C_{n,t}=\sum_{k \in \mathbb{Z}} e^{-k^{2}\delta_{n}^{2}/(2t)}$ is the normalizing constant.

\begin{prop}\label{prop:Gaussian smooth}
Let $\alpha>1$ and $\beta=\alpha/(\alpha-1)$. For any $t>0$ and $f\in \ell^\alpha(\mathcal{L}_n)$, we have
\begin{enumerate}
\item $\|T_{n,t}f\|_{\alpha} \leq  \|f\|_{\alpha}$;
\item $\|T_{n,t}f\|_{\infty} \leq \big(C_{n,t/\beta}^{1/\beta}/C_{n, t}\big) \|f\|_{\alpha}$;
\item For $\alpha\leq\gamma <\infty$, $\|T_{n,t}f\|_{\gamma} \leq \big(C_{n,t/\beta}^{1/\beta}/C_{n,t}\big)^{1-\alpha/\gamma}\|f \|_{\alpha}.$
\end{enumerate}   
\end{prop}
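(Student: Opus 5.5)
The plan is to write $T_{n,t}f(y)=\sum_{x\in\mathcal L_n}K(y-x)f(x)$ with kernel $K(u)=e^{-u^2/(2t)}/C_{n,t}$. The kernel is defined on the difference lattice $\delta_n\mathbb Z$, since $y-x\in\delta_n\mathbb Z$ for $x,y\in\mathcal L_n$. The key observation is that for each fixed $y\in\mathcal L_n$, the map $x\mapsto y-x$ is a bijection from $\mathcal L_n$ onto $\delta_n\mathbb Z$. Hence
$$
\sum_{x\in\mathcal L_n}K(y-x)=\frac{1}{C_{n,t}}\sum_{k\in\mathbb Z}e^{-k^2\delta_n^2/(2t)}=1,
$$
and by symmetry $\sum_{y\in\mathcal L_n}K(y-x)=1$ for each fixed $x$. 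So $K$ is a doubly stochastic kernel, and all three statements reduce to Jensen/Hölder-type estimates.

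For (1), I would apply Jensen's inequality (convexity of $|\cdot|^\alpha$) with respect to the probability weights $K(y-\cdot)$. This gives $|T_{n,t}f(y)|^\alpha\le\sum_x K(y-x)|f(x)|^\alpha$. Summing over $y\in\mathcal L_n$ and using $\sum_y K(y-x)=1$, after exchanging the order of summation (Tonelli), yields $\|T_{n,t}f\|_\alpha^\alpha\le\|f\|_\alpha^\alpha$.

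For (2), I would apply Hölder's inequality with exponents $\beta$ and $\alpha$ (note $1/\alpha+1/\beta=1$):
$$
|T_{n,t}f(y)|\le\frac{1}{C_{n,t}}\Big(\sum_x e^{-\beta(y-x)^2/(2t)}\Big)^{1/\beta}\|f\|_\alpha .
$$
The inner sum equals $\sum_k e^{-k^2\delta_n^2/(2(t/\beta))}=C_{n,t/\beta}$, again by the bijection $x\mapsto y-x$. This gives the bound $C_{n,t/\beta}^{1/\beta}/C_{n,t}$, uniformly in $y$.

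For (3), I would interpolate pointwise: for $\gamma\ge\alpha$, write
$$
\|g\|_\gamma^\gamma=\sum|g|^{\alpha}|g|^{\gamma-\alpha}\le\|g\|_\infty^{\gamma-\alpha}\|g\|_\alpha^\alpha
$$
with $g=T_{n,t}f$. Inserting (1) and (2) gives $\|g\|_\gamma^\gamma\le (C_{n,t/\beta}^{1/\beta}/C_{n,t})^{\gamma-\alpha}\|f\|_\alpha^{\gamma}$. Taking the $\gamma$-th root produces the exponent $1-\alpha/\gamma$.

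There is no serious obstacle. The only point needing care is the bijection $\mathcal L_n\to\delta_n\mathbb Z$ (both lattices have span $\delta_n$, and the offset $\sqrt n a$ cancels in differences). This bijection ensures that the normalizing constant, and its analogue with $t/\beta$, are independent of $y$ and $x$. One also needs absolute convergence to justify exchanging sums, which is immediate from $f\in\ell^\alpha$ and the Gaussian decay of the kernel.
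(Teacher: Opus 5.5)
Your proposal is correct and follows the paper's proof step for step. The three steps are: Jensen with the normalized Gaussian weights and an interchange of summation for (1); H\"older with exponents $\beta$ and $\alpha$ for (2), identifying the kernel sum as $C_{n,t/\beta}$; and the interpolation $\|g\|_\gamma^\gamma\le\|g\|_\infty^{\gamma-\alpha}\|g\|_\alpha^\alpha$ for (3). Your remark that $x\mapsto y-x$ maps $\mathcal L_n$ bijectively onto $\delta_n\mathbb Z$ makes explicit the normalization the paper uses without comment.
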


\begin{proof}
(1) 
For $\alpha >1$, we apply Jensen's inequality to obtain  
$$
|T_{n,t}f(y)|^\alpha=\left|\sum_{x\in \mathcal{L}_{n}}\frac{e^{-(y-x)^2/(2t)}}{C_{n,t}}f(x)\right|^\alpha\leq 
\sum_{x\in \mathcal{L}_{n}}\frac{e^{-(y-x)^2/(2t)}}{C_{n,t}}|f(x)|^\alpha.
$$
Then 
\begin{align*}
 \|T_{n,t}f\|_\alpha^\alpha=\sum_{y\in \mathcal{L}_{n}}|T_{n,t}f(y)|^\alpha &\leq\sum_{y\in\mathcal{L}_{n}}\sum_{x\in \mathcal{L}_{n}}\frac{e^{-(y-x)^2/(2t)}}{C_t}|f(x)|^\alpha\\
 &=\sum_{x\in\mathcal{L}_{n}}\sum_{y\in \mathcal{L}_{n}}\frac{e^{-(y-x)^2/(2t)}}{C_{n,t}}|f(x)|^\alpha=\|f\|_\alpha^\alpha.   
\end{align*}

(2)	For any $y\in \mathcal{L}_{n}$, by H\"older's inequality with $\alpha>1$ and $\beta=\alpha/(\alpha-1)$, we have 
\begin{align*}
|T_{n,t} f(y)| &=\left|\sum_{x\in \mathcal{L}_{n}}\frac{e^{-(y-x)^2/(2t)}}{C_{n,t}}f(x) \right|\\
&\leq\left[\sum_{x\in \mathcal{L}_{n}}\frac{e^{-\beta(y-x)^2/(2t)}}{C_{n,t}^{\beta}}\right]^{1/\beta}\left[\sum_{x\in\mathcal{L}_{n}}|f(x)|^{\alpha}
\right]^{1/\alpha}=\frac{C_{n,t/\beta}^{1/\beta}}{C_{n,t}}\cdot\|f\|_{\alpha}.
\end{align*}

(3) For $1<\alpha\leq \gamma<\infty$, apply properties (1) and (2) to obtain
\begin{align*}
\sum_{y\in \mathcal{L}_{n}} |T_{n,t}f(y)|^\gamma 
&=\sum_{y\in \mathcal{L}_{n}}
|T_{n,t}f(y)|^{\gamma-\alpha}|T_{n,t}f(y)|^\alpha\\
&\leq\|T_{n,t}f\|_\infty^{\gamma-\alpha}\cdot\|T_{n,t}f\|_\alpha^\alpha\\
&\le \left(\frac{C_{n,t/\beta}^{1/\beta}}{C_{n,t}}\right)^{\gamma-\alpha}\|f\|_\alpha^{\gamma-\alpha}\cdot \|f\|_\alpha^\alpha\\
&= \left(\frac{C_{n,t/\beta}^{1/\beta}}{C_{n,t}}\right)^{\gamma-\alpha}\|f\|_{\alpha}^{\gamma}.
\end{align*}
\end{proof}

\begin{rmk}
For any $1<\alpha<\gamma$, we have the embedding $\ell^\alpha(\mathcal{L}_n)\subset\ell^\gamma(\mathcal{L}_n)$. Combined with statement (1) of Proposition \ref{prop:Gaussian smooth}, this implies that the discrete Gaussian smoothing operator $T_{n, t}: \ell^\alpha(\mathcal{L}_n)\to\ell^\gamma(\mathcal{L}_n)$ is a contraction for any $t>0$. This is different from its continuous counterpart,  Weierstrass transform (see Section 5 of \cite{BCG19}).
\end{rmk}

\begin{prop}\label{prop:sub-Gaussian-cricical} 
Let $n_0$ be a positive integer. Let $Y$ be a lattice random variable in $\mathcal{L}_{n_0}$ such that $D_{\alpha}(Y||Z_{n_0})<\infty$. Let $Y_{1},\cdots,Y_{n}$ be independent copies of $Y$ and set $U_{n}=(Y_{1}+\cdots+Y_{n})/\sqrt{n}$. For $n \geq \alpha$, we have
$$
\mathbb{E}\exp\left(\frac{U_{n}^{2}}{2\beta}\right)\leq \left[\frac{h+\sqrt{2\beta}}{Q_{n_0}}\cdot e^{D_{\alpha}(Y||Z_{n_0})}\right]^{^{n/\beta}},
$$
where $Q_{n_0}=\sum_{k\in\mathbb Z}q_{n_0, k}$.
\end{prop}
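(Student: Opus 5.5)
The plan is to write the law of $Y$ as a product of an $\ell^\alpha$ function and a Gaussian factor, so that the critical Gaussian moment of $U_n$ becomes an $n$-fold product of Gaussian smoothings, in the spirit of Proposition \ref{prop:Gaussian smooth}.

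\emph{Factorization.} Write $\delta=\delta_{n_0}=h/\sqrt{n_0}$, $p(x)=\mathbb P(Y=x)$ and $q(x)=\delta\varphi(x)$ for $x\in\mathcal L_{n_0}$. Define $f(x)=p(x)q(x)^{-1/\beta}$. Then
$$
\|f\|_\alpha^\alpha=\sum_x p^\alpha q^{1-\alpha}=Q_{n_0}^{1-\alpha}e^{(\alpha-1)D_\alpha(Y\|Z_{n_0})},
$$
so that
$$
\|f\|_\alpha=\big(e^{D_\alpha(Y\|Z_{n_0})}/Q_{n_0}\big)^{1/\beta}.
$$
This is exactly the base appearing in the claim, raised to the power $1/\beta$.

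\emph{Expansion of the critical moment.} We have
$$
\mathbb E e^{U_n^2/2\beta}=\sum_{x_1,\dots,x_n}\prod_i f(x_i)\,\delta^{n/\beta}(2\pi)^{-n/2\beta}\exp\Big(-\tfrac{1}{2\beta}\sum_i x_i^2+\tfrac{1}{2\beta n}\big(\textstyle\sum_i x_i\big)^2\Big).
$$
Since $\sum x_i^2-(\sum x_i)^2/n=\sum(x_i-\bar x)^2$, the exponent equals $-\frac1{2\beta}\sum_i(x_i-\bar x)^2$.

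\emph{Decoupling.} Use the identity $\sum_i(x_i-y)^2=\sum_i(x_i-\bar x)^2+n(y-\bar x)^2$. Averaging over $y$ against $\int e^{-n(y-\bar x)^2/2\beta}dy=\sqrt{2\pi\beta/n}$ gives
$$
\mathbb E e^{U_n^2/2\beta}=\sqrt{\tfrac{n}{2\pi\beta}}\int_{\mathbb R} g(y)^n\,dy,\qquad g(y)=\delta^{1/\beta}(2\pi)^{-1/2\beta}\sum_{x\in\mathcal L_{n_0}} f(x)e^{-(x-y)^2/2\beta}.
$$
Alternatively, if the $\sqrt n$ prefactor proves troublesome, I would average over $y$ in a suitable finer lattice instead, which is the discrete analogue and is closer to the operator $T_{n,t}$.

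\emph{$\ell^\infty$ and $\ell^\alpha$ bounds for $g$.} By Hölder, as in Proposition \ref{prop:Gaussian smooth}(2),
$$
|g(y)|\le \delta^{1/\beta}(2\pi)^{-1/2\beta}\|f\|_\alpha\Big(\sum_x e^{-(x-y)^2/2}\Big)^{1/\beta}.
$$
Here $\sum_x e^{-(x-y)^2/2}\le 2+\sqrt{2\pi}/\delta$ by comparing the sum with the integral. Hence $\|g\|_\infty\le c\,\|f\|_\alpha$, with $c^\beta$ comparable to $(h+\sqrt{2\pi})/\sqrt{2\pi}$, using $\delta\le h$. Similarly, Jensen as in Proposition \ref{prop:Gaussian smooth}(1) gives a bound on $\int g^\alpha dy$ by $\|f\|_\alpha^\alpha$ times a Gaussian integral. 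This is where $\sqrt{2\beta}$ enters, through $\int e^{-(x-y)^2/2\beta}dy=\sqrt{2\pi\beta}$.

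\emph{Conclusion.} Since $n\ge\alpha$, bound $\int g^n\le\|g\|_\infty^{n-\alpha}\int g^\alpha$, exactly as in part (3) of Proposition \ref{prop:Gaussian smooth}. Collecting powers, $\|f\|_\alpha^n=(e^{D}/Q_{n_0})^{n/\beta}$ appears, together with constants raised to the power $n/\beta$.

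The main obstacle is the bookkeeping of constants. Specifically, I must make the per-factor constant exactly $(h+\sqrt{2\beta})^{1/\beta}$ and absorb the stray $\sqrt{n/(2\pi\beta)}$ and the $\delta$ factors, possibly by using a crude inequality such as $\sqrt n\le C^{n}$ or by choosing the averaging lattice cleverly so that no $\sqrt n$ appears. I would first try the discrete averaging over $y\in\mathcal L_{n_0}$ with weight $e^{-(x-y)^2/2\beta}$. The normalizing sums are then of the form $C_{n,t}$, bounded by $1+\sqrt{2\pi t}/\delta$, and I would check whether the $\sqrt n$ cancels. If it does not, I would fall back to the continuous averaging above and accept a slightly weaker constant.
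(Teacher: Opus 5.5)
Your Gaussian linearization is a genuinely different route from the paper's, and it is correct up to the last step. The identity $\mathbb E e^{U_n^2/(2\beta)}=\sqrt{n/(2\pi\beta)}\int_{\mathbb R}g(y)^n\,dy$ is exact. With the comparison $\sum_{x\in\mathcal L_{n_0}}e^{-c(x-y)^2}\le 1+\sqrt{\pi/c}/\delta_{n_0}$ (valid for unimodal summands), you get $\|g\|_\infty\le(1+\delta_{n_0}/\sqrt{2\pi})^{1/\beta}\|f\|_\alpha$ and $\int g^\alpha\,dy\le\sqrt{2\pi\beta}\,(\sqrt\beta+\delta_{n_0}/\sqrt{2\pi})^{\alpha-1}\|f\|_\alpha^\alpha$.

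The gap is the final interpolation $\int g^n\le\|g\|_\infty^{n-\alpha}\int g^\alpha$. It discards the fact that $g^n$ is concentrated on windows of width $\sqrt{\beta/n}$, so the $\sqrt{n}$ prefactor survives and all you obtain is
\[
\mathbb E e^{U_n^2/(2\beta)}\le\sqrt n\,\Bigl(1+\frac{\delta_{n_0}}{\sqrt{2\pi}}\Bigr)^{(n-\alpha)/\beta}\Bigl(\sqrt\beta+\frac{\delta_{n_0}}{\sqrt{2\pi}}\Bigr)^{\alpha/\beta}\|f\|_\alpha^n .
\]
Both bases are smaller than $h+\sqrt{2\beta}$ by a factor of at least $\sqrt2$. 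So this implies the claim when $n^\beta\le 2^n$, but not for small $n$. For $\alpha=3/2$ (so $\beta=3$) and $n=2$, in the limit $h\to0$ your constant is $\sqrt2\cdot 3^{1/4}\approx1.86$, whereas the statement's is $6^{1/3}\approx1.82$ (both multiplying $\|f\|_\alpha^2$).

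None of your proposed remedies closes this. The inequality $\sqrt n\le C^n$ changes the constant. Averaging over a finer lattice only reproduces a Riemann sum of the same integral. Averaging over $y\in\mathcal L_{n_0}$ is worse: $\bar x$ lives on a lattice $n$ times finer than $\mathcal L_{n_0}$, so $\inf_{\bar x}\sum_{y\in\mathcal L_{n_0}}e^{-n(y-\bar x)^2/(2\beta)}$ is only of order $e^{-n\delta_{n_0}^2/(8\beta)}$. As written, you prove the proposition only with a worse constant. That would suffice for how Corollary~\ref{coro:integrability} is used in Section~\ref{sec:proof of main result}, but it is not the stated inequality.

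The missing idea, within your own framework, is to keep that concentration. For $\theta\in(0,1)$ write $e^{-(x-y)^2/(2\beta)}=e^{-\theta(x-y)^2/(2\beta)}e^{-(1-\theta)(x-y)^2/(2\beta)}$ and apply H\"older with exponents $(\alpha,\beta)$. Then apply Jensen for the convex map $s\mapsto s^{n/\alpha}$ with weights $f^\alpha/\|f\|_\alpha^\alpha$; this is where $n\ge\alpha$ enters. The result is
\[
g(y)^n\le\Bigl(\frac{\delta_{n_0}}{\sqrt{2\pi}}+\frac{1}{\sqrt{1-\theta}}\Bigr)^{n/\beta}\|f\|_\alpha^{n-\alpha}\sum_{x\in\mathcal L_{n_0}}f(x)^\alpha e^{-n\theta(x-y)^2/(2\beta)} .
\]
Integrating in $y$ produces $\sqrt{2\pi\beta/(n\theta)}$, which cancels the prefactor exactly:
\[
\mathbb E e^{U_n^2/(2\beta)}\le\theta^{-1/2}\Bigl(\frac{\delta_{n_0}}{\sqrt{2\pi}}+\frac{1}{\sqrt{1-\theta}}\Bigr)^{n/\beta}\|f\|_\alpha^n .
\]
Take $\theta=\beta/(\beta+n)$ and $u=n/\beta$. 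The right-hand side becomes $\bigl[(1+u)^{1/(2u)}\bigl(\delta_{n_0}/\sqrt{2\pi}+\sqrt{1+1/u}\bigr)\bigr]^{n/\beta}\|f\|_\alpha^n$.
\begin{itemize}
\item Since $(1+u)^{1/(2u)}\le\sqrt e<\sqrt{2\pi}$ and $\delta_{n_0}\le h$, the first term is at most $h$.
\item The condition $n\ge\alpha$ is equivalent to $u\le n-1$. Since $(1+u)^{1+1/u}$ is increasing in $u$, this gives $(1+u)^{1+1/u}\le n^{n/(n-1)}\le 2n$, which is exactly $(1+u)^{1/(2u)}\sqrt{1+1/u}\le\sqrt{2\beta}$.
\end{itemize}
Together these give the stated bound $(h+\sqrt{2\beta})^{n/\beta}\|f\|_\alpha^n$.

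For comparison, the paper never creates the prefactor. It writes $\sum_i(x_i-\bar x)^2=\frac1{2n}\sum_{m,j}(x_m-x_j)^2$ and applies $n$-fold H\"older, so that the center of the Gaussian is one of the summands. This reduces to $C_{n_0,2\beta}^{n-1}\sum_{y}(T_{n_0,2\beta}f)(y)^{n-1}f(y)$ at the price of doubling the variance. It then concludes with H\"older and Proposition~\ref{prop:Gaussian smooth}(3) at $\gamma=\beta(n-1)$, where $n\ge\alpha$ is exactly the condition $\gamma\ge\alpha$.
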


\begin{proof}
Let $P$ be the probability mass function of $Y$. For any $y,y_{1},\cdots,y_{n} \in \mathcal{L}_{n_{0}}$, denote 
$$
\widehat y=\frac{y_1+\cdots+y_n}{\sqrt{n}}\quad and \quad 
f(y)=	\frac{P(y)}{q_{n_0}^{1/\beta}(y)},
$$
where $q_{n_{0}}(y)=\delta_{n_{0}}\varphi(y)$. One can check that $\|f\|_{\alpha}^{\alpha }=\big[e^{D_{\alpha}(Y||Z_{n_0})}/Q_{n_0}\big]^{(\alpha-1)}$. Since $D_{\alpha}(Y||Z_{n_{0}})<\infty$, we have $f \in \ell^{\alpha}(\mathcal{L}_{n_{0}})$. We may therefore apply Proposition \ref{prop:Gaussian smooth} to $T_{n_{0},t}f$ and use the resulting properties to derive an upper bound for $\mathbb E e^{U_n^2/2\beta}$.

\begin{align*}
\mathbb{E}\exp\left(\frac{U_{n}^{2}}{2\beta}\right)
&=\sum_{y_1\in \mathcal L_{n_0}}\cdots\sum_{y_n\in \mathcal L_{n_0}}e^{\widehat{y}^2/(2\beta)}P(y_1)\cdots P(y_n) \\
&=\left(\frac{\delta_{n_0}}{\sqrt{2\pi}}\right)^{n/\beta}\sum_{y_1\in \mathcal L_{n_0}}
\cdots\sum_{y_n\in \mathcal L_{n_0}}\exp\left[\frac{(\sum_{j=1}^ny_j)^2-n\sum_{j=1}^ny_j^2}{2n\beta}\right]\prod_{i=1}^nf(y_i)\\
&=\left(\frac{\delta_{n_0}}{\sqrt{2\pi}}\right)^{n/\beta}\sum_{y_1\in \mathcal L_{n_0}}
\cdots\sum_{y_n\in \mathcal L_{n_0}}\exp\left[-\frac{1}{4\beta n}\sum\limits_{m=1}^n\sum\limits_{j=1}^n (y_m-y_j)^2\right]\prod_{i=1}^nf(y_i)\\
&=\left(\frac{\delta_{n_0}}{\sqrt{2\pi}}\right)^{n/\beta}\sum_{y_1\in \mathcal L_{n_0}}
\cdots\sum_{y_n\in \mathcal L_{n_0}}\prod_{m=1}^n\left[\exp\left(-
\frac{1}{4\beta}\sum_{j=1}^n (y_m-y_j)^2\right)\prod_{i=1}^nf(y_i)\right]^{1/n},
\end{align*}
where the second last equality follows from $\sum_{m=1}^n\sum_{j=1}^n (y_m-y_j)^2=2n(y_1^2+\cdots+y_n^2)-2(y_1+\cdots+y_n)^2$. By H\"older inequality, we have 
\begin{align*}
\mathbb{E}\exp\left(\frac{U_{n}^{2}}{2\beta}\right)
&\le\left(\frac{\delta_{n_0}}{\sqrt{2\pi}}\right)^{n/\beta}\prod_{m=1}^n\left[\sum_{y_1\in \mathcal L_{n_0}}\cdots\sum_{y_n\in \mathcal L_{n_0}}\exp\left(-\frac{1}{4\beta}\sum_{j=1}^n (y_m-y_j)^2\right)\prod_{i=1}^nf(y_i)\right]^{1/n} \\
&=\left(\frac{\delta_{n_0}}{\sqrt{2\pi}}\right)^{n/\beta}\sum_{y_1\in \mathcal L_{n_0}}\cdots\sum_{y_n\in \mathcal L_{n_0}}\exp\left(-\frac{1}{4\beta}\sum_{j=1}^n (y_1-y_j)^2\right)\prod_{i=1}^nf(y_i) \\
&=\left(\frac{\delta_{n_0}}{\sqrt{2\pi}}\right)^{n/\beta}\sum_{y_1\in \mathcal L_{n_0}}\cdots\sum_{y_n\in \mathcal L_{n_0}}\prod_{j=1}^{n}\left[\exp\left(	-\frac{(y_1-y_j)^2}{4\beta}\right)f(y_j)\right] \\
&=\left(\frac{\delta_{n_0}}{\sqrt{2\pi}}\right)^{n/\beta}\sum_{y_1\in \mathcal L_{n_0}}
\left[\sum_{y\in \mathcal L_{n_0}}\exp\left(-\frac{(y_1-y)^2}{4\beta}\right)f(y)\right]^{n-1}f(y_1) \\
&=\left(\frac{\delta_{n_0}}{\sqrt{2\pi}}\right)^{n/\beta}C_{n_{0},2\beta}^{n-1}\sum_{y_1\in \mathcal L_{n_0}}
\left[T_{n_{0},2\beta}f(y_1)\right]^{n-1}f(y_1)\\
&\leq\left(\frac{\delta_{n_0}}{\sqrt{2\pi}}\right)^{n/\beta}C_{n_{0},2\beta}^{n-1}\|T_{n_{0},2\beta}f\|_{\beta(n-1)}^{n-1}\|f\|_{\alpha}.
\end{align*}
Now we derive an estimate for $C_{n_{0},t}$.
$$
C_{n_{0},t}=\sum_{k\in \mathbb Z}e^{- \delta_{n_{0}}^2 k^2/(2t)}
=1+2\sum_{k=1}^{\infty}e^{-\delta_{n_{0}}^2 k^2/(2t)} 
\le 1+2\int_0^\infty e^{- \delta_{n_{0}}^2 s^2/(2t)}\,ds 
=1+\sqrt{2\pi t}/\delta_{n_{0}},
$$
where the inequality follows from that $e^{-\beta \delta_{n_{0}}^2 s^2/(2t)}$ is decreasing on $(0,\infty)$. 
For $n\geq \alpha$, $\beta(n-1)=\alpha(n-1)/ (\alpha-1)\geq \alpha$. By (3) in Propsition \ref{prop:Gaussian smooth}, replacing $\gamma$ and $t$ by $\beta(n-1)$ and $2\beta$, respectively, we obtain 
\begin{align*}
C_{n_{0},2\beta} \|T_{n_{0},2\beta}f\|_{\beta(n-1)}&\leq C_{n_{0},2\beta}\big(C_{n_{0},2}^{1/\beta}/C_{n_{0},2\beta
}\big)^{1-\alpha/(\beta(n-1))}\|f\|_{\alpha}\\
&=C_{n_{0},2}^{1/\beta-\alpha/(\beta^{2}(n-1))}C_{n_{0},2\beta}^{\alpha/(\beta(n-1))}\|f\|_{\alpha}\\
&\leq \big(1+2\sqrt{\pi\beta}/\delta_{n_{0}}\big)^{n/(\beta(n-1))}\|f\|_{\alpha}.
\end{align*}
Furthermore,
\begin{align*}
\mathbb{E}\exp\left(\frac{U_{n}^{2}}{2\beta}\right)&\leq \left(\frac{\delta_{n_0}}{\sqrt{2\pi}}\right)^{n/\beta}\left(1+\frac{2\sqrt{\pi\beta}}{\delta_{n_{0}}}\right)^{n/\beta} \|f\|_{\alpha}^{n} \\
&\leq \big(h+\sqrt{2\beta}\big)^{n/\beta}\left(e^{D_{\alpha}(Y||Z_{n_0})}/Q_{n_0}\right)^{n/\beta}.
\end{align*} 
\end{proof}

\begin{coro}\label{coro:integrability}
Let $\alpha>1$ and $\beta=\alpha/(\alpha-1)$. Suppose $D_{\alpha}(S_{n_{0}}||Z_{n_0})<\infty$ for some $n_0$. Then the function $\psi(t) =\mathbb{E}e^{tX}e^{-\beta t^{2}/2}$ is integrable with power $\ell n_{0}$ for any integer $\ell\geq \alpha$, and moreover 
$$
\int_{\mathbb R}\psi(t)^{\ell n_{0}} ~\text{d}t \leq \sqrt{\frac{2\pi}{\beta \ell n_{0}}}\left[\frac{h+\sqrt{2\beta}}{Q_{n_0}}\cdot e^{D_\alpha(S_{n_0}\|Z_{n_0})}\right]^{\ell/\beta},
$$
where $Q_{n_0}=\sum_{k\in\mathbb Z}q_{n_0, k}$.
\end{coro}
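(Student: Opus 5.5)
The plan is to apply Proposition \ref{prop:sub-Gaussian-cricical} with $Y=S_{n_0}$ and $n=\ell$, and then convert the resulting bound on a Gaussian moment of the normalized sum into a bound on $\int\psi^{\ell n_0}$ via a Gaussian integral identity.

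Take $Y=S_{n_0}$, which lives on $\mathcal{L}_{n_0}$ and satisfies $D_\alpha(Y\|Z_{n_0})<\infty$ by hypothesis. Let $Y_1,\dots,Y_\ell$ be i.i.d. copies of $S_{n_0}$, built from disjoint blocks of $n_0$ copies of $X$. Then
$$
U_\ell=\frac{Y_1+\cdots+Y_\ell}{\sqrt{\ell}}
$$
has the same law as $S_N$ with $N=\ell n_0$. Since $\ell\ge\alpha$, Proposition \ref{prop:sub-Gaussian-cricical} applies and gives
$$
\mathbb E\exp\!\left(\frac{S_N^2}{2\beta}\right)\le \left[\frac{h+\sqrt{2\beta}}{Q_{n_0}}\, e^{D_\alpha(S_{n_0}\|Z_{n_0})}\right]^{\ell/\beta}.
$$

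Next I linearize the quadratic exponent using the Gaussian identity
$$
e^{u^2/(2\beta)}=\sqrt{\frac{\beta}{2\pi}}\int_{\mathbb R} e^{tu-\beta t^2/2}\,dt, \qquad u\in\mathbb R.
$$
Substituting $u=S_N$, taking expectations and using Tonelli (all integrands are nonnegative) yields
$$
\mathbb E e^{S_N^2/(2\beta)}=\sqrt{\frac{\beta}{2\pi}}\int_{\mathbb R}\mathbb E e^{tS_N}\,e^{-\beta t^2/2}\,dt .
$$
By independence, $\mathbb E e^{tS_N}=\big(\mathbb E e^{tX/\sqrt N}\big)^N$. Changing variables $t=\sqrt N s$ gives $e^{-\beta t^2/2}=e^{-\beta N s^2/2}=\big(e^{-\beta s^2/2}\big)^N$, so the integrand becomes $\psi(s)^N$ and
$$
\int_{\mathbb R}\mathbb E e^{tS_N}e^{-\beta t^2/2}\,dt=\sqrt N\int_{\mathbb R}\psi(s)^N\,ds .
$$

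Combining the two displays gives
$$
\int_{\mathbb R}\psi^{N}\,ds=\sqrt{\frac{2\pi}{\beta N}}\;\mathbb E e^{S_N^2/(2\beta)}.
$$
Inserting the bound from the first step yields exactly the claimed inequality, including finiteness, so $\psi$ is integrable with power $\ell n_0$.

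There is no real obstacle here. The one point needing care is that Tonelli justifies the interchange even before finiteness is known: both sides are allowed to be $+\infty$ a priori, and the bound from Proposition \ref{prop:sub-Gaussian-cricical} then forces them to be finite.
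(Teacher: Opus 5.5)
Your proposal is correct and follows the paper's proof essentially step for step. The paper likewise uses Fubini to get $\int_{\mathbb R}\psi^{\ell n_0}\,dt=\sqrt{2\pi/(\beta\ell n_0)}\;\mathbb E e^{S_{\ell n_0}^2/(2\beta)}$, and then bounds the right-hand side by Proposition \ref{prop:sub-Gaussian-cricical} applied to $\ell$ independent copies of $S_{n_0}$. Your explicit remark that Tonelli justifies the interchange before finiteness is known is a small but welcome addition.
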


\begin{proof} 
Set $n=\ell n_0$. It is clear that  
$$
\psi(t)^n=\mathbb E e^{t\sqrt n S_n}e^{-n\beta t^2/2}.
$$
By Fubini's theorem, we have
\begin{align*}
\int_{\mathbb R} \psi(t)^n\,dt
&=\mathbb E\left[\int_{\mathbb R}\exp\left(-\frac{n\beta t^2}{2}+t\sqrt n S_n\right)\,dt\right]
=\sqrt{\frac{2\pi}{\beta n}}\mathbb E
\exp\left(\frac{S_{n}^2}{2\beta}\right).
\end{align*} 
Let $Y_1,\ldots,Y_\ell$ be independent copies of $S_{n_0}$. Then we have $S_n\overset{d}{=}U_\ell=(Y_1+\cdots+Y_\ell)/\sqrt \ell$. 
By assumption, we have $D_{\alpha}(Y_i||Z_{n_{0}}) < \infty$. Therefore, we apply Proposition \ref{prop:sub-Gaussian-cricical} to obtain
$$
\int_{\mathbb R} \psi(t)^{\ell n_{0}}\,dt=\sqrt{\frac{2\pi}{\beta \ell n_{0}}}\mathbb E
\exp\left(\frac{U_\ell^2}{2\beta}\right)
\leq \sqrt{\frac{2\pi}{\beta \ell n_{0}}}\left[\frac{h+\sqrt{2\beta}}{Q_{n_0}}\cdot e^{D_\alpha(S_{n_{0}}\|Z_{n_0})}\right]^{\ell/\beta}.
$$
\end{proof}


\section{Tail estimates} \label{sec:tail estimates}

Given that $D_{\alpha}(S_{n_{0}}||Z_{n_{0}}) <\infty$ for some $n_{0}$, Proposition \ref{prop:sub-Gaussian-cricical} yields a Gaussian decay of $S_n$ of the form $p_{n, k}\le Ce^{-x_{n, k}^2/2\beta}$. We now establish refined pointwise bounds on $p_{n, k}$ that are effective in the moderate and large deviation regimes. This estimate complements the local limit theorem, which is typically restricted to moderately sized deviations. Let $\psi(t)=\mathbb{E}e^{tX}e^{-\beta t^{2}/2}$ and recall that $A_{n,\alpha}=\sum_{k \in \mathbb{Z}}p_{n,k}^{\alpha}/q_{n,k}^{\alpha-1}$.

\begin{prop}\label{prop:moderate-large deviation} 
Let $\alpha>1$ and $\beta=\alpha/(\alpha-1)$. Suppose $D_{\alpha}(S_{n_{0}}||Z_{n_{0}}) <\infty$ for some $n_{0}$. For $n\geq n_{\beta}:=\max\,\{2,\beta\}n_{0}$, we have
\begin{equation} 
p_{n,k}\leq C\exp\left(-\frac{x_{n,k}^{2}}{2\beta}\right)\psi\left(\frac{x_{n,k}}{\beta\sqrt{n}}\right)^{n-n_{\beta}},\quad k\in\mathbb{Z}.
\end{equation}
Here, $C>0$ is a constant depending on $n_{0},\alpha$ and $h$, which can be chosen explicitly as 
$$
\begin{aligned}
C&=
\begin{cases}
\delta_{n_{0}}A_{n_0,\alpha}^{1/(\alpha-1)}/\sqrt{2\pi},& ~1<\alpha \leq 2\\
\delta_{n_{0}}A_{n_0,\alpha}^{2/\alpha}\left(\sqrt{\frac{\beta(\alpha-2)}{2\alpha}}+ \frac{2\delta_{n_{0}}}{\sqrt{2\pi}}\right)^{\frac{\alpha-2}{\alpha}}\Big/\sqrt{2\pi},& ~\alpha > 2.
\end{cases}
\end{aligned}
$$
\end{prop}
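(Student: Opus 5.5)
The plan is to avoid contour shifting and prove the bound by an elementary exponential tilting (Chernoff-type) argument. It combines three ingredients: the trivial pointwise consequence of $A_{n_0,\alpha}<\infty$, a decomposition of the sum into a block of $n_0$ summands plus a remainder, and the Laplace-transform bound of Proposition \ref{prop:sub-Gaussian-b}. This yields the stated inequality, possibly with a different explicit constant, and the statement only asks for some $C$ depending on $n_0,\alpha,h$. If one insists on the displayed value of $C$, the same skeleton should still work, with the pointwise bound of the first step replaced by one obtained through the smoothing operator of Section \ref{sec:enhanced sub-Gaussianity}.

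\emph{Step 1 (pointwise Gaussian bound at level $n_0$).} Since each term of $A_{n_0,\alpha}$ is at most the whole sum, $p_{n_0,j}^\alpha\le A_{n_0,\alpha}q_{n_0,j}^{\alpha-1}$. Hence
\[
p_{n_0,j}\le A_{n_0,\alpha}^{1/\alpha}\Big(\tfrac{\delta_{n_0}}{\sqrt{2\pi}}\Big)^{1/\beta}e^{-x_{n_0,j}^2/(2\beta)}=:c_0\,e^{-x_{n_0,j}^2/(2\beta)} .
\]
Write $W=X_1+\dots+X_{n_0}=\sqrt{n_0}S_{n_0}$. For every $\lambda\in\mathbb R$ and every atom $z=\sqrt{n_0}w$ of $W$, we get $\mathbb P(W=z)e^{\lambda z}\le c_0\exp(\lambda\sqrt{n_0}w-w^2/(2\beta))\le c_0e^{\beta\lambda^2n_0/2}$ by maximizing the quadratic in $w$.

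\emph{Step 2 (tilting the full sum).} Let $x=x_{n,k}$ and $\lambda=x/(\beta\sqrt n)$. Split $X_1+\dots+X_n=W+R$, where $R$ is the sum of the remaining $n-n_0$ copies, independent of $W$. Then
\[
p_{n,k}=e^{-\lambda\sqrt n x}\,\mathbb E\big[e^{\lambda(W+R)};\,W+R=\sqrt n x\big]\le e^{-\lambda\sqrt n x}\sup_z\big[\mathbb P(W=z)e^{\lambda z}\big]\,\big(\mathbb E e^{\lambda X}\big)^{n-n_0}.
\]
Inserting Step 1 and $\mathbb E e^{\lambda X}=\psi(\lambda)e^{\beta\lambda^2/2}$ gives the exponent
\[
-\lambda\sqrt n x+\beta\lambda^2 n/2=-x^2/\beta+x^2/(2\beta)=-x^2/(2\beta).
\]
Thus $p_{n,k}\le c_0\,e^{-x^2/(2\beta)}\psi(x/(\beta\sqrt n))^{n-n_0}$ for every $n\ge n_0$.

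\emph{Step 3 (adjusting the exponent).} Proposition \ref{prop:sub-Gaussian-b} applied at level $n_0$ gives $\sup_t\psi(t)\le (C'A_{n_0,\alpha}^{1/\alpha})^{1/n_0}=:M$. Since $n_\beta\ge n_0$, we have $\psi^{n-n_0}\le \max\{1,M\}^{\,n_\beta-n_0}\psi^{n-n_\beta}$ for $n\ge n_\beta$. This yields the claimed form with $C=c_0\max\{1,M\}^{n_\beta-n_0}$, which depends only on $n_0,\alpha,h$ (through $A_{n_0,\alpha}$).

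The main obstacle I expect is Step 1's uniformity in the tilt: the supremum over $z$ must be controlled by exactly $e^{\beta\lambda^2 n_0/2}$, with no loss in the Gaussian exponent, for all $\lambda$ simultaneously. This is what makes the tilting produce precisely $e^{-x^2/(2\beta)}$, and the crude bound $p\le A^{1/\alpha}q^{1/\beta}$ has exactly the right exponent $1/(2\beta)$, so there is no slack to lose. If a sharper constant of the form stated (with the case split at $\alpha=2$) is required, I would instead bound $\sup_j p_{m,j}e^{x_{m,j}^2/(2\beta)}$ for $m=n_\beta$ via Proposition \ref{prop:Gaussian smooth}(2)--(3). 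That is where the threshold $\max\{2,\beta\}$ and the $\ell^\gamma$ interpolation would enter, in the manner of Proposition \ref{prop:sub-Gaussian-cricical}.
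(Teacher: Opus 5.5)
Your argument is correct as a proof of the inequality with \emph{some} constant $C=C(n_0,\alpha,h,A_{n_0,\alpha})$, and it follows a genuinely different, more elementary route than the paper.

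\textbf{How the paper argues.} The paper writes $p_{n,k}$ by lattice Fourier inversion and shifts the contour to $\mathbb R+iy$. This needs the entire extension of $\phi$ (Lemma \ref{lem:phi-phi_n-extension}) and the $2\pi/\delta_n$-periodicity of $e^{-izx_{n,k}}\phi_n(z)$, so that the vertical sides cancel (Lemmas \ref{lem:periodicity of F_n}, \ref{lem:+iy}). It then bounds $n-2n_0$ factors of $|\phi|$ (for $\alpha>2$), or $n-\beta n_0$ factors (for $1<\alpha\le 2$), by $\phi(iy/\sqrt n)$. The remaining block is controlled by Plancherel plus H\"older with exponents $\alpha/2,\ \alpha/(\alpha-2)$, or by Hausdorff--Young. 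Finally it sets $y=-x_{n,k}/\beta$.

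\textbf{How your route compares.} Your tilt with $\lambda=x_{n,k}/(\beta\sqrt n)=-y/\sqrt n$ is exactly the real-variable form of that contour shift. Replacing the Fourier step by the crude $\ell^\infty\le\ell^\alpha$ bound on a single block of $n_0$ summands removes all the complex analysis. It even gives the stronger factor $\psi^{\,n-n_0}$ for every $n\ge n_0$. Proposition \ref{prop:sub-Gaussian-b} is then needed only to pass to $\psi^{\,n-n_\beta}$, since $\psi$ may exceed $1$ under the hypotheses.

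\textbf{What you give up: the displayed constant.} Put $u=(\delta_{n_0}/\sqrt{2\pi})^{\alpha-1}A_{n_0,\alpha}$. Your $c_0$ equals $u^{1/\alpha}$, while the paper's constant for $1<\alpha\le 2$ is $u^{1/(\alpha-1)}$. The paper's is smaller whenever $u<1$, for instance $u\approx(\delta_{n_0}/\sqrt{2\pi})^{\alpha-1}$ when $S_{n_0}$ is close to $Z_{n_0}$ with small span. Your extra factor $\max\{1,M\}^{n_\beta-n_0}$ only makes yours larger. So the ``can be chosen explicitly'' clause is not proved. This is harmless downstream, since only the existence of $C$ is used in Corollary \ref{coro:large deviation} and Section \ref{sec:proof of main result}.

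\textbf{Your proposed fix is off target.} The split at $\alpha=2$ and the threshold $\max\{2,\beta\}n_0$ do not come from the smoothing operator. They record how many summands are spent on the Fourier estimate: $2n_0$ for Plancherel, and $\beta n_0$ for Hausdorff--Young, whose range is exactly $\alpha\le 2$.

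\textbf{Recovering the constant within your skeleton.} For $\alpha>2$ you can recover the paper's constant exactly. Take two independent blocks $W,W'$ and apply Cauchy--Schwarz:
\[
\sup_z\mathbb P(W+W'=z)e^{\lambda z}\le\sum_w\mathbb P(W=w)^2e^{2\lambda w},
\]
which is literally the paper's Plancherel quantity. The rest of the paper's Case 1 then applies verbatim. For $1<\alpha\le 2$ the paper's bound uses $|\phi_{n_0}|^{\beta}$ with $\beta$ generally non-integer, a ``fractional convolution power''. Young's inequality reproduces it only when $\beta$ is an integer, and this is precisely what the Fourier route buys.
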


\begin{lem}\label{lem:phi-phi_n-extension}
Let $\phi$ and $\phi_n$ be the characteristic functions of $X$ and $S_{n}$, respectively. That is,
\begin{equation*}
\phi(t)=\mathbb{E}e^{itX}\quad\text{and}\quad \phi_{n}(t)=\mathbb{E}e^{itS_{n}}, \quad t\in \mathbb{R}.
\end{equation*}
Suppose $D_{\alpha}(S_{n_0}||Z_{n_0}) <\infty$ for some $n_{0}$. Then $\phi$ and $\phi_n$ can be extended to the complex plane as an entire function. 
\end{lem}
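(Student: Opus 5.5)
The plan is to show that the moment generating function $\mathbb{E}e^{zX}$ is finite for every complex $z$ and defines an entire function. The characteristic function is then recovered as $\phi(t)=\mathbb{E}e^{itX}$, so its extension is $\phi(w)=\mathbb{E}e^{iwX}$ for $w\in\mathbb{C}$. Correspondingly, $\phi_n(w)=\phi(w/\sqrt n)^n$, which is entire as a composition and power of entire functions.

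The key input is Proposition \ref{prop:sub-Gaussian-b}, which applies because $D_\alpha(S_{n_0}\|Z_{n_0})<\infty$. It gives
\[
\mathbb{E}e^{tX}\le (CA_{n_0,\alpha}^{1/\alpha})^{1/n_0}e^{\beta t^2/2}
\]
for all real $t$. For $w=u+iv$ we have $|e^{iwX}|=e^{-vX}$. Hence
\[
\mathbb{E}|e^{iwX}|\le \mathbb{E}e^{-vX}\le C' e^{\beta v^2/2}<\infty,
\]
so the series
\[
\phi(w)=\sum_{k}p_k e^{iw(a+kh)}
\]
converges absolutely for every $w$. Moreover, on any compact set $K\subset\mathbb{C}$ with $|\operatorname{Im} w|\le R$, each term is dominated by $p_k(e^{R x_k}+e^{-Rx_k})$, where $x_k=a+kh$. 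This dominating series is summable, since it is bounded by $\mathbb{E}e^{RX}+\mathbb{E}e^{-RX}$.

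Each partial sum is an entire function, being a finite sum of exponentials. The series therefore converges uniformly on compact sets by the Weierstrass M-test. A locally uniform limit of holomorphic functions is holomorphic, by Weierstrass' theorem (or Morera's theorem), so $\phi$ is entire. It agrees with the characteristic function on $\mathbb{R}$.

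For $\phi_n$ one could argue in the same way directly, since Proposition \ref{prop:sub-Gaussian-b} also bounds $\mathbb{E}e^{tS_n}$ when $n=n_0$. For general $n$, it is simplest to use the identity $\phi_n(w)=\phi(w/\sqrt n)^n=\mathbb{E}e^{iwS_n}$. Independence justifies this identity for complex $w$, because all the expectations are absolutely convergent. There is no real obstacle here. The only point needing care is that domination must hold on both sides, $e^{\pm RX}$, and this is provided by the bound in Proposition \ref{prop:sub-Gaussian-b} being valid for all $t\in\mathbb{R}$, both positive and negative.
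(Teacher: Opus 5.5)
Your proof is correct. It reaches holomorphy by a slightly different mechanism than the paper does.

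\textbf{The paper's route.} The paper first notes, via Proposition~\ref{prop:sub-Gaussian-b}, that $\mathbb{E}e^{-vX}<\infty$. It then uses the Gaussian-square integrability of Proposition~\ref{prop:sub-Gaussian-a}, namely $\mathbb{E}e^{|zX|}\le e^{|z|^2/(4c)}\mathbb{E}e^{cX^2}<\infty$, to justify taking expectations term by term in the Taylor series of $e^{izX}$. This gives $\phi(z)=\sum_{m\ge 0} i^m\mathbb{E}X^m z^m/m!$ on all of $\mathbb{C}$. The case of $\phi_n$ is dismissed with ``similarly''.

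\textbf{Your route.} You keep the lattice series $\sum_k p_k e^{iw(a+kh)}$. You dominate it on each strip $|\operatorname{Im} w|\le R$ by $\mathbb{E}e^{RX}+\mathbb{E}e^{-RX}$. You then invoke the Weierstrass M-test and Weierstrass' convergence theorem, so you only ever use the Laplace-transform bound of Proposition~\ref{prop:sub-Gaussian-b}.

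\textbf{What each buys.} The paper's route produces the explicit moment expansion of $\phi$. This makes the identification of derivatives at $0$ with moments immediate, which is used later for $f(t)=\log\phi(it)$. However, Proposition~\ref{prop:sub-Gaussian-a} is more than that route needs: $\mathbb{E}e^{|z||X|}\le \mathbb{E}e^{|z|X}+\mathbb{E}e^{-|z|X}$ already suffices.

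Your route needs only finiteness of the Laplace transform on $\mathbb{R}$, and it gives uniform convergence on whole horizontal strips. It also directly delivers the representation $\phi_{n_0}(s+iy)=\sum_j e^{i(s+iy)x_{n_0,j}}p_{n_0,j}$, which the proof of Proposition~\ref{prop:moderate-large deviation} relies on. Finally, it handles $\phi_n$ explicitly through $\phi(w/\sqrt n)^n=\mathbb{E}e^{iwS_n}$, with the independence and absolute-convergence justification that the paper leaves implicit.
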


\begin{proof}
Let $0<c<1/(2\beta)$. By Proposition \ref{prop:sub-Gaussian-b}, we have for any $z=u+iv\in \mathbb C$ that
\begin{equation*}
|\mathbb E e^{izX}|\le \mathbb E|e^{izX}|=\mathbb E e^{-vX}<\infty. 
\end{equation*}
Therefore, $\phi$ can be extended to the complex plane $\mathbb C$. Furthermore, for any given $z\in \mathbb C$, we have
$$
e^{izX}=\sum_{m=0}^\infty\frac{(izX)^m}{m!}\quad \text{and} \quad e^{|zX|}=\sum_{m=0}^\infty\frac{|zX|^m}{m!},
$$
and clearly
$$
\left|\sum_{m=0}^n\frac{(izX)^m}{m!}\right|\le \sum_{m=0}^n\frac{|zX|^m}{m!}\quad \text{for all}~n\in\mathbb{Z}_{\ge 0}.
$$
We apply Proposition \ref{prop:sub-Gaussian-a} to obtain
$$
\mathbb E e^{|zX|}\le \mathbb E e^{|z|^2/(4c)+cX^2}=e^{|z|^2/(4c)}\mathbb E e^{cX^2}<\infty.
$$
By Lebesgue's dominated convergence theorem, we have
$$
\mathbb{E}e^{izX}=\sum_{m=0}^{\infty}\frac{i^{m}\mathbb{E}X^{m}}{m!}z^{m}, \quad z \in \mathbb{C}.
$$
Thus $\phi$ admits an entire extension to the whole complex plane. Similarly, one can show that $\phi_{n}$ also admits such an entire extension. 
\end{proof}

One can check the following simple property, which will be repeatedly used in the following proofs. For any $z=u+iv\in \mathbb{C}$, we have
\begin{align}
\phi_{n}(z) &=\phi(z/\sqrt n)^{n}\label{eq:phi-phi_n},\\
|\phi(z)|&\le\mathbb Ee^{-vX}=\phi(iv)<\infty\label{eq:Fourier bound}.
\end{align}

\begin{lem}\label{lem:periodicity of F_n}
For each $n \in \mathbb{Z}_{\geq 1}$ and $k \in \mathbb{Z}$, the function $F_{n,k}(z)=e^{-izx_{n,k}}\phi_n(z), ~z\in \mathbb{C}$ is entire and $2\pi/\delta_n$-periodic, i.e.,
$$
F_{n,k}\left(z+\frac{2\pi}{\delta_{n}}\right)=F_{n,k}(z).
$$
\end{lem}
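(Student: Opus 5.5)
Entireness is immediate from Lemma \ref{lem:phi-phi_n-extension}: $\phi_n$ extends to an entire function, and $z\mapsto e^{-izx_{n,k}}$ is entire, so their product $F_{n,k}$ is entire. We take the hypothesis of that lemma, $D_\alpha(S_{n_0}\|Z_{n_0})<\infty$ for some $n_0$, as standing in this section.

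For periodicity, the plan is to write $\phi_n$ as a lattice sum and factor out the phase. Since $S_n$ lives on $\mathcal L_n$, for every $z\in\mathbb C$ we have
\begin{equation*}
\phi_n(z)=\sum_{j\in\mathbb Z}p_{n,j}e^{izx_{n,j}},
\end{equation*}
which converges absolutely because $\mathbb E e^{-vS_n}<\infty$ for $z=u+iv$. Multiplying by $e^{-izx_{n,k}}$ and using $x_{n,j}-x_{n,k}=(j-k)\delta_n$ gives
\begin{equation*}
F_{n,k}(z)=\sum_{j\in\mathbb Z}p_{n,j}e^{iz(j-k)\delta_n}.
\end{equation*}
Replacing $z$ by $z+2\pi/\delta_n$ multiplies the $j$-th term by $e^{2\pi i(j-k)}=1$, so the whole series is unchanged. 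This proves the periodicity.

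The only point needing care is that the series representation of $\phi_n$ is valid off the real axis. The plan is to note that absolute convergence of $\sum_j p_{n,j}e^{-vx_{n,j}}$ follows from Proposition \ref{prop:sub-Gaussian-b}, or from the bound $\mathbb E e^{|zS_n|}<\infty$ shown in the proof of Lemma \ref{lem:phi-phi_n-extension}. Since the series then defines the same entire function as the extension in that lemma, nothing else is needed.
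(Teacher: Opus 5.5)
Your proof is correct, and it takes a slightly different route from the paper's. Both arguments rest on the same fact: the lattice support makes a shift by $2\pi/\delta_n$ act by a phase. They apply it at different levels.

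The paper works with $X$. Writing $X=a+hY$, it gets $\phi(w+2\pi/h)=e^{2\pi i a/h}\phi(w)$. It then raises this to the $n$-th power using $\phi_n(z)=\phi(z/\sqrt n)^n$ on $\mathbb C$, giving $\phi_n(z+2\pi/\delta_n)=e^{2\pi i na/h}\phi_n(z)$. Finally it cancels this factor against the phase $e^{-2\pi i na/h}$ produced by $e^{-i(2\pi/\delta_n)x_{n,k}}$.

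You instead expand $\phi_n$ directly over the support $\mathcal L_n$ of $S_n$. Then $F_{n,k}(z)=\sum_j p_{n,j}e^{iz(j-k)\delta_n}$ is a series in $e^{iz\delta_n}$, and the periodicity holds term by term. This avoids both the phase bookkeeping and the complex factorization identity. The cost is checking that the lattice series represents the entire extension off the real axis, which you do via $\mathbb E e^{-vS_n}<\infty$. The paper itself uses this same representation later, in the proof of Proposition \ref{prop:moderate-large deviation}.
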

	
\begin{proof}
For each $n \in \mathbb{Z}_{\geq 1}$ and $k \in \mathbb{Z}$, both $e^{izx_{n,k}}$ and $\phi_{n}(z)$ are entire functions of $z$, and therefore so is their product $F_{n,k}(z)$. For $x_{n,k}=(na+kh)/\sqrt n$ and any $z\in \mathbb{C}$, we have 
\begin{align}\label{eq:e^itx}
\exp\left(-i\left(z+\frac{2\pi}{\delta_{n}}\right)x_{n,k}\right)
= e^{-izx_{n,k}}\exp\left(\frac{-2\pi ina}{h}\right). 
\end{align}
Since $X\in a+h\mathbb Z$ a.s., we may write $X=a+hY$, where $Y$ is a random variable in $\mathbb{Z}$. Therefore
$$
\exp\left(\frac{2\pi i X}{h}\right)=\exp\left(\frac{2\pi i (a+hY)}{h}\right)=\exp\left(\frac{2\pi ia}{h}\right).
$$
Note $\phi_{n}(z)=\phi(z/\sqrt{n})^{n}$. Then we get
\begin{align}
\phi_n\left(z+\frac{2\pi}{\delta_{n}}\right)
&=\left[\phi\left(\frac{z+2\pi/\delta_{n}}{\sqrt n}\right)\right]^n \notag\\
	&=\left[\mathbb E\exp\left(\frac{izX}{\sqrt n}\right)\exp\left(\frac{2\pi iX}{h}\right)
	\right]^n \notag\\
&=\left[\exp\left(\frac{2\pi ia}{h}\right)\mathbb E\exp\left(\frac{izX}{\sqrt n}\right)\right]^n \notag\\
&=\exp\left(\frac{2\pi ina}{h}\right)\phi_n(z). \label{eq:periodic-phi} 
\end{align}
Combine \eqref{eq:e^itx} and \eqref{eq:periodic-phi} to obtain
\begin{align*}
F_{n,k}\left(z+\frac{2\pi}{\delta_{n}}\right)&=\exp\left(-i\left(z+\frac{2\pi}{\delta_{n}}\right)x_{n,k}\right)\phi_n\left(z+\frac{2\pi}{\delta_{n}}\right) 
=e^{-izx_{n,k}}\phi_n(z) 
=F_{n,k}(z).
\end{align*}
This concludes the proof.	
\end{proof}

\begin{lem} \label{lem:+iy}
Let $n \in \mathbb{Z}_{\geq 1}$ and $k\in \mathbb{Z}$. For every fixed $y \in \mathbb{R}$, it holds that
\begin{equation}\label{eq:+iy}
\int_{-\pi/\delta_{n}}^{\pi/\delta_{n}} F_{n,k}(t)\,dt
=\int_{-\pi/\delta_{n}}^{\pi/\delta_{n}} F_{n,k}(t+iy)\,dt.
\end{equation}
\end{lem}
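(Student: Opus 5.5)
We apply Cauchy's theorem to the entire function $F_{n,k}$ on the rectangle with vertices $-\pi/\delta_n$, $\pi/\delta_n$, $\pi/\delta_n+iy$, $-\pi/\delta_n+iy$. By Lemma \ref{lem:phi-phi_n-extension} and Lemma \ref{lem:periodicity of F_n}, $F_{n,k}$ is entire, so its integral over the boundary of this rectangle vanishes. Orienting the boundary counterclockwise when $y>0$ (the case $y<0$ is identical with reversed orientation, and $y=0$ is trivial), we get
\begin{align*}
0=\int_{-\pi/\delta_n}^{\pi/\delta_n}F_{n,k}(t)\,dt+\int_0^{y}F_{n,k}\Big(\frac{\pi}{\delta_n}+is\Big)\,i\,ds-\int_{-\pi/\delta_n}^{\pi/\delta_n}F_{n,k}(t+iy)\,dt-\int_0^{y}F_{n,k}\Big(-\frac{\pi}{\delta_n}+is\Big)\,i\,ds.
\end{align*}

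The key step is that the two vertical contributions cancel exactly. By Lemma \ref{lem:periodicity of F_n}, with $z=-\pi/\delta_n+is$,
$$
F_{n,k}\Big(\frac{\pi}{\delta_n}+is\Big)=F_{n,k}\Big(-\frac{\pi}{\delta_n}+is+\frac{2\pi}{\delta_n}\Big)=F_{n,k}\Big(-\frac{\pi}{\delta_n}+is\Big)
$$
for every $s\in\mathbb R$. Hence the two vertical integrals are equal and enter with opposite signs, so they cancel. What remains is exactly identity \eqref{eq:+iy}.

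There is no real obstacle here. The only points needing care are that $F_{n,k}$ is genuinely entire, so that no growth condition is required on the bounded rectangle (this is supplied by the sub-Gaussian integrability in Lemma \ref{lem:phi-phi_n-extension}), and the bookkeeping of orientation signs.
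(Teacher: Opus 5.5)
Your proposal is correct and follows the paper's own proof: Cauchy's theorem on the rectangle with base $[-\pi/\delta_n,\pi/\delta_n]$ and height $y$. In both arguments the two vertical side integrals cancel exactly by the $2\pi/\delta_n$-periodicity of $F_{n,k}$ from Lemma \ref{lem:periodicity of F_n}, and the case $y<0$ is treated the same way.
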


\begin{proof}
For $y=0$, it is obvious. For $y>0$, we choose the rectangle
$$
R_{y}=\{t+is:-\pi/\delta_{n}\leq t\leq \pi/\delta_{n}, ~0\leq s\leq y\}.
$$
We orient the boundary $\partial R_{y}$ counterclockwise and decompose it as $\partial R_{y}=\Gamma_{1}\cup\Gamma_{2}\cup\Gamma_{3}\cup\Gamma_{4}$, where 
\begin{align*}
\Gamma_{1} &=\{-\pi/\delta_{n}+2\pi r/\delta_{n}: 0\leq r \leq 1\}, \\
\Gamma_{2} &=\{\pi/\delta_{n}+iy r:  0\leq r \leq 1\}, \\
\Gamma_{3} &=\{\pi/\delta_{n}-2\pi r/\delta_{n}+iy: 0\leq r \leq 1\}, \\
\Gamma_{4} &=\{-\pi/\delta_{n}+iy(1-r): 0\leq r \leq 1\} . 
\end{align*}
Since $F_{n,k}$ is entire, Cauchy's integral theorem gives
\begin{align*}
0=\oint_{\partial R_{y}} F_{n,k}(z)\,dz
&~=\int_{-\pi/\delta_{n}}^{\pi/\delta_{n}} F_{n,k}(t)\,dt+\int_0^y F_{n,k}(\pi/\delta_{n}+is)i\,ds \\ &~~~+\int_{\pi/\delta_{n}}^{-\pi/\delta_{n}} F_{n,k}(t+iy)\,dt+\int_y^0 F_{n,k}(-\pi/\delta_{n}+is)i\,ds.
\end{align*}
By the periodicity of \(F_{n,k}\) stated in Lemma \ref{lem:periodicity of F_n},  we have
$$
F_{n,k}(\pi/\delta_{n}+is)=F_{n,k}(-\pi/\delta_{n}+is).
$$
Hence the second and fourth terms cancel each other. Then we can get 
$$
\int_{-\pi/\delta_{n}}^{\pi/\delta_{n}} F_{n,k}(t)\,dt=\int_{-\pi/\delta_{n}}^{\pi/\delta_{n}} F_{n,k}(t+iy)\,dt.
$$
The $y<0$ case of \eqref{eq:+iy} can be proved in a similar manner.
\end{proof}
    
\begin{proof}[Proof of Proposition \ref{prop:moderate-large deviation}]
Recall $p_{n, k}=\mathbb P(S_n=x_{n, k})$, where $x_{n,k}=(na+kh)/\sqrt{n}$. Using the Fourier inversion formula \eqref{eq:lattice inverse Fourier transform}, we have
\begin{align}
p_{n,k}&=\frac{\delta_n}{2\pi}\int_{-\pi/\delta_n}^{\pi/\delta_n}e^{-itx_{n,k}}\phi_n(t)\,dt\notag\\ 
&=\frac{\delta_n}{2\pi}\int_{-\pi/\delta_n}^{\pi/\delta_n}e^{-i(t+iy)x_{n,k}}\phi_n(t+iy)\,dt \notag\\
&=e^{yx_{n,k}}\cdot\frac{\delta_n}{2\pi}\int_{-\pi/\delta_n}^{\pi/\delta_n}e^{-itx_{n,k}} \phi\left((t+iy)/\sqrt n\right)^n dt\label{eq:inversion 1}.
\end{align}
The second identity follows from Lemma \ref{lem:+iy} and it holds for all $y\in\mathbb R$. The last identity follows from \eqref{eq:phi-phi_n}.

\textbf{Case 1: $\alpha>2, n\geq 2n_0$}. Combine \eqref{eq:inversion 1}, \eqref{eq:Fourier bound} and \eqref{eq:phi-phi_n} with $t:=s\sqrt{n/n_0}$ to obtain
\begin{align}
p_{n,k}&\leq e^{yx_{n,k}}
\phi\left(\frac{iy}{\sqrt n}\right)^{n-2n_0}\cdot \frac{\delta_{n}}{2\pi}
\int_{-\pi/\delta_n}^{\pi/\delta_n}\left|\phi\left(\frac{t}{\sqrt{n}}+\frac{iy}{\sqrt{n}}\right)\right|^{2n_0}\, dt \notag\\
&=e^{yx_{n,k}}
\phi\left(\frac{iy}{\sqrt n}\right)^{n-2n_0}\cdot \frac{\delta_{n_{0}}}{2\pi}
\int_{-\pi/\delta_{n_{0}}}^{\pi/\delta_{n_{0}}}\left|\phi_{n_{0}}\left(s+iy\sqrt{\frac{n_0}{n}}\right)\right|^{2}\, ds. \label{eq:p_{n,k} bond 1}
\end{align}
By the definition in Lemma \ref{lem:phi-phi_n-extension}, we have
\begin{align*}
\phi_{n_{0}}\left(s+iy\sqrt{\frac{n_0}{n}}\right)
&=\sum_{j\in\mathbb{Z}}\exp\left(i\left(s+iy\sqrt{\frac{n_0}{n}}\right)x_{n_0,j}\right)\,p_{n_0,j}\\
&=\sum_{j\in \mathbb{Z}} e^{isx_{n_0,j}}\exp\left(-yx_{n_0,j}\sqrt{\frac{n_0}{n}}\right)\,p_{n_0,j}.
\end{align*}
Note that the last quantity is the Fourier transform of the sequence
$$
\left\{\exp\left(-yx_{n_0,j}\sqrt{\frac{n_0}{n}}\right)\,p_{n_0,j}\right\}_{j\in\mathbb Z}.
$$
By Plancherel's Theorem \ref{thm:Plancherel-b}, we have 
\begin{equation}\label{eq:q_n,k-Plancherel}
 \frac{\delta_{n_0}}{2\pi}\int_{-\pi/\delta_{n_0}}^{\pi/\delta_{n_0}}\left|\phi_{n_0}\left(s+iy\sqrt{\frac{n_0}{n}}\right)\right|^2ds=\sum_{j\in \mathbb{Z}}\exp\left(-2yx_{n_0,j}\sqrt{\frac{n_0}{n}}\right)\,p_{n_0,j}^2.   
\end{equation}
Recall $\alpha>2$ and $\beta=\alpha/(\alpha-1)$. Apply the H\"older inequality to the right-hand side of \eqref{eq:q_n,k-Plancherel} to obtain
\begin{align}
&\sum_{j\in \mathbb{Z}}\exp\left(-2yx_{n_0,j}\sqrt{\frac{n_0}{n}}\right)
\,p_{n_0,j}^2\notag\\
=&\sum_{j\in \mathbb{Z}}
\exp\left
(-2yx_{n_0,j}\sqrt{\frac{n_0}{n}}\right)q_{n_0,j}^{2/\beta}\cdot\frac{p_{n_0,j}^2}{q_{n_0,j}^{2/\beta}} \notag\\
\leq&\left[\sum_{j\in \mathbb{Z}}\exp\left(-\frac{2\alpha yx_{n_0,j}}{\alpha-2}\sqrt{\frac{n_0}{n}}\right)q_{n_0,j}^{\frac{2\alpha}{\beta(\alpha-2)}}\right]^{(\alpha-2)/\alpha}
\left[\sum_{j\in \mathbb{Z}}\frac{p_{n_0,j}^{\alpha}}{q_{n_0,j}^{\alpha-1}}\right]^{2/\alpha}. \label{eq:Plancherel expansion}
\end{align}
 We now establish an estimate for the first term of \eqref{eq:Plancherel expansion}.
\begin{align}
&\sum_{j\in \mathbb{Z}}\exp\left(-\frac{2\alpha yx_{n_0,j}}{\alpha-2}\sqrt{\frac{n_0}{n}}\right)q_{n_0,j}^{\frac{2\alpha}{\beta(\alpha-2)}}\notag\\
=&\left(\frac{\delta_{n_0}}{\sqrt{2\pi}}\right)^{\frac{2\alpha}{\beta(\alpha-2)}}\sum_{j\in \mathbb{Z}}\exp\left(-\frac{\alpha x_{n_0,j}^2}{\beta(\alpha-2)}-\frac{2\alpha yx_{n_0,j}}{\alpha-2}\sqrt{\frac{n_0}{n}}\right)\notag\\
=&\left(\frac{\delta_{n_0}}{\sqrt{2\pi}}\right)^{\frac{2\alpha}{\beta(\alpha-2)}}
\sum_{j\in \mathbb{Z}}\exp\left(-\frac{\alpha}{\beta(\alpha-2)}\left(x_{n_0,j}+\beta y\sqrt{\frac{n_0}{n}}\right)^2\right)\cdot\exp\left(\frac{\alpha\beta}{\alpha-2}\frac{n_0}{n}y^2
\right)\notag\\
\leq&\left(\frac{1}{\sqrt{2\pi}}\right)^{\frac{2\alpha}{\beta(\alpha-2)}}
\delta_{n_0}^{\frac{\alpha}{\alpha-2}}\left(\sqrt{\frac{\pi\beta(\alpha-2)}{\alpha}}+ 2\delta_{n_{0}}\right)\cdot\exp\left(\frac{\alpha\beta}{\alpha-2}\frac{n_0}{n}y^2
\right)\notag\\
=&\left(\frac{\delta_{n_0}}{\sqrt{2\pi}}\right)^{\frac{\alpha}{\alpha-2}}\left(\sqrt{\frac{\beta(\alpha-2)}{2\alpha}}+ \frac{2\delta_{n_{0}}}{\sqrt{2\pi}}\right)\cdot\exp\left(\frac{\alpha\beta}{\alpha-2}\frac{n_0}{n}y^2\right).\label{eq:boud of the first term}
\end{align}
To see the above inequality, we set $y_{n_{0},j}=x_{n_{0},j}+\beta y\sqrt{n_{0}/n}$ and $c=\alpha/\beta(\alpha-2)$. Since $e^{-c x^{2}}$ is increasing on $(-\infty, 0)$ and decrasing on $(0,\infty)$, we can obtain 
$$
\delta_{n_{0}}\sum_{j\in\mathbb{Z}}\exp\left(-cy_{n_{0},j}^{2}\right)\leq  \int_{-\infty}^{\infty} e^{-cx^{2}}\,dx +2\delta_{n_{0}}=\sqrt{\frac{\pi}{c}}+ 2\delta_{n_{0}}.
$$
Recall that $A_{n,\alpha}=\sum_{k\in \mathbb{Z}}p_{n,k}^{\alpha}/q_{n,k}^{\alpha-1}$. So the second term of \eqref{eq:Plancherel expansion} is $A_{n_{0},\alpha}^{2/\alpha}$. Since $\psi(t)=\mathbb{E}e^{tX}e^{-\beta t^{2}/2}=\phi(-it)e^{-\beta t^{2}/2}$, it is easy to check that 
\begin{equation}\label{eq:phi-psi}
\phi\left(\frac{iy}{\sqrt n}\right)=\psi\left(-\frac{y}{\sqrt n}\right)\exp\left(\frac{\beta y^2}{2n}\right). 
\end{equation}
Denote
$$
C_{1}:=\frac{\delta_{n_{0}}}{\sqrt{2\pi }}\left(\sqrt{\frac{\beta(\alpha-2)}{2\alpha}}+ \frac{2\delta_{n_{0}}}{\sqrt{2\pi}}\right)^{\frac{\alpha-2}{\alpha}} A_{n_{0},\alpha}^{2/\alpha}.
$$
Then combing \eqref{eq:p_{n,k} bond 1}, \eqref{eq:q_n,k-Plancherel}, \eqref{eq:Plancherel expansion}, \eqref{eq:boud of the first term} and \eqref{eq:phi-psi}, we can obtain
\begin{align}   
p_{n,k}
&\leq C_{1}\phi\left(\frac{iy}{\sqrt n}\right)^{n-2n_0}\exp\left(\frac{ \beta n_{0} y^{2}}{n}+yx_{n,k}\right) \notag\\
&\leq 	C_{1}\psi\left(-\frac{y}{\sqrt n}\right)^{n-2n_0}\exp\left(\frac{ \beta y^{2}}{2}+yx_{n,k}\right) \label{eq:psi-replace}\\
&\leq C_{1}\psi\left(\frac{x_{n,k}}{\beta\sqrt n}\right)^{n-2n_0}\exp\left(-\frac{x_{n,k}^{2}}{2\beta}\right).\label{eq:minimal}
\end{align}	
For each $x_{n,k}\in \mathcal{L}_{n}$, inequality \eqref{eq:psi-replace} holds for all $y \in \mathbb{R}$. Then we obtain inequality \eqref{eq:minimal} by selecting $y=-x_{n,k}/\beta$.

\textbf{Case 2: $1<\alpha\leq 2,~ n\geq \beta n_0$.}  Combine \eqref{eq:inversion 1}, \eqref{eq:Fourier bound} and \eqref{eq:phi-phi_n} with $t:=s\sqrt{n/n_0}$ to obtain
\begin{align}
p_{n,k}&\leq e^{yx_{n,k}}
\phi\left(\frac{iy}{\sqrt n}\right)^{n-\beta n_0}\cdot \frac{\delta_{n}}{2\pi}
\int_{-\pi/\delta_n}^{\pi/\delta_n}\left|\phi\left(\frac{t}{\sqrt{n}}+\frac{iy}{\sqrt{n}}\right)\right|^{\beta n_0}\, dt \notag\\
&=e^{yx_{n,k}}
\phi\left(\frac{iy}{\sqrt n}\right)^{n-\beta n_0}\cdot \frac{\delta_{n_{0}}}{2\pi}
\int_{-\pi/\delta_{n_{0}}}^{\pi/\delta_{n_{0}}}\left|\phi_{n_{0}}\left(s+iy\sqrt{\frac{n_0}{n}}\right)\right|^{\beta}\, ds. \label{eq:p_{n,k} bond 2}
\end{align}
As shown in the analysis of Case 1, $\phi_{n_0}(s+iy\sqrt{n_0/ n})$ is the Fourier transform of the sequence
$$
\left\{\exp\left(-yx_{n_0,j}\sqrt{\frac{n_0}{n}}\right)\,p_{n_0,j}\right\}_{j\in\mathbb Z}.
$$
Since $1<\alpha\leq 2$ and $1/\alpha + 1/\beta=1$, we apply the Hausdorff--Young inequality in Theorem \ref{thm:HYI-b} to obtain 
\begin{equation}\label{eq:p_n,k Hausdorff-Young}
\frac{\delta_{n_0}}{2\pi}\int_{-\pi/\delta_{n_0}}^{\pi/\delta_{n_0}}\left|
\phi_{n_0}\left(t+iy\sqrt{\frac{n_0}{n}}\right)\right|^\beta dt\leq\left(
\sum_{j\in \mathbb{Z}} \exp\left(-\alpha y x_{n_0,j}\sqrt{\frac{n_0}{n}}\right)p_{n_0,j}^{\alpha}
\right)^{\beta/\alpha}.
\end{equation}
Recall $A_{n_{0},\alpha}=\sum_{j\in \mathbb{Z}}p_{n_0,j}^{\alpha}/q_{n_0,j}^{\alpha-1}$. We now establish an estimate for the right-hand side of \eqref{eq:p_n,k Hausdorff-Young}. 
\begin{align}
\sum_{j \in \mathbb Z}\exp\left(-\alpha y x_{n_0,j}\sqrt{\frac{n_0}{n}}\right)p_{n_0,j}^{\alpha} 
&=\sum_{j \in \mathbb Z}\exp\left(-\alpha y x_{n_0,j}\sqrt{\frac{n_0}{n}}\right)q_{n_0,j}^{\alpha-1}\cdot\frac{p_{n_0,j}^{\alpha}}{q_{n_0,j}^{\alpha-1}} \notag\\
&=\left(\frac{\delta_{n_0}}{\sqrt{2\pi}}\right)^{\alpha-1}\sum_{j \in \mathbb Z}\exp\left(-\frac{\alpha-1}{2}x_{n_0,j}^{2}-\alpha y x_{n_0,j}\sqrt{\frac{n_0}{n}}\right)\frac{p_{n_0,j}^{\alpha}}{q_{n_0,j}^{\alpha-1}} \notag\\
&\leq\left(\frac{\delta_{n_0}}{\sqrt{2\pi}}\right)^{\alpha-1} A_{n_{0},\alpha} \cdot\exp\left( \frac{n_{0}\alpha^{2}y^{2}}{2n(\alpha-1)}\right).\label{eq:bound for right}
\end{align}
Recall that $\beta=\alpha/(\alpha-1)$. Denote 
$$
C_{2}:=\left[\left(\frac{\delta_{n_0}}{\sqrt{2\pi}}\right)^{\alpha-1} A_{n_{0},\alpha}\right]^{\beta/\alpha}=\frac{\delta_{n_0}}{\sqrt{2\pi}}A_{n_{0},\alpha}^{1/(\alpha-1)}.
$$
Then we combine \eqref{eq:p_{n,k} bond 2}, \eqref{eq:p_n,k Hausdorff-Young}, \eqref{eq:bound for right} and \eqref{eq:phi-psi} to obtain 
\begin{align}   
p_{n,k}
&\leq C_{2}\phi\left(\frac{iy}{\sqrt n}\right)^{n-\beta n_0}\exp\left(\frac{n_{0} \beta^{2}y^{2}}{2n}+yx_{n,k}\right) \notag\\
&\leq 	C_{2}\psi\left(-\frac{y}{\sqrt n}\right)^{n-\beta n_0}\exp\left(\frac{ \beta y^{2}}{2}+yx_{n,k}\right) \label{eq:psi-replace 1}\\
&\leq C_{2}\psi\left(\frac{x_{n,k}}{\beta\sqrt n}\right)^{n-\beta n_0}\exp\left(-\frac{x_{n,k}^{2}}{2\beta}\right).\label{eq:minimal 1}
\end{align}	
For each $x_{n,k}\in \mathcal{L}_{n}$, inequality \eqref{eq:psi-replace 1} holds for all $y \in \mathbb{R}$. Then we can obtain inequality \eqref{eq:minimal 1} by selecting $y=-x_{n,k}/\beta$.
\end{proof}

\begin{coro} \label{coro:large deviation}
Let $\alpha>1$ and $\beta=\alpha/(\alpha-1)$. Suppose $D_{\alpha}(S_{n_{0}}||Z_{n_{0}}) <\infty$ for some $n_{0}$. Then there exists $x_{0}\geq 0$ and $\rho \in (0,1)$ such that, for all $n$ large enough, we have
$$
p_{n,k}\leq \rho^{n}\exp\left(-\frac{x_{n,k}^{2}}{2\beta}\right)\psi\left(
\frac{x_{n,k}}{\beta\sqrt n}\right)^{n/2},\quad|x_{n,k}|\geq x_0 \sqrt{n}.
$$
\end{coro}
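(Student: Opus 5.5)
The plan is to derive the corollary directly from Proposition \ref{prop:moderate-large deviation}, using the decay of $\psi$ at infinity from Proposition \ref{prop:sub-Gaussian-large t}. Only condition (1) is assumed here, not the strict sub-Gaussian condition (2). So I cannot use $\psi<1$ away from the origin, and will instead work only in the region $|u|$ large, where $\psi(u)\to 0$.

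\textbf{Step 1 (choice of $x_0$).} By Proposition \ref{prop:sub-Gaussian-large t}, $\psi(u)=\mathbb E e^{uX}e^{-\beta u^2/2}\to 0$ as $|u|\to\infty$. Fix some $\varepsilon\in(0,1)$, say $\varepsilon=1/2$. Choose $u_0>0$ such that $\psi(u)\le \varepsilon$ for all $|u|\ge u_0$, and set $x_0:=\beta u_0$. Then $|x_{n,k}|\ge x_0\sqrt n$ is exactly the condition
$$
\left|\frac{x_{n,k}}{\beta\sqrt n}\right|\ge u_0,
$$
so on this range $\psi\big(x_{n,k}/(\beta\sqrt n)\big)\le\varepsilon$.

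\textbf{Step 2 (splitting the exponent).} Take $n\ge 2n_\beta$ with $n_\beta=\max\{2,\beta\}n_0$, so that $n/2-n_\beta\ge 0$ and Proposition \ref{prop:moderate-large deviation} applies. Write
$$
\psi^{\,n-n_\beta}=\psi^{\,n/2}\cdot\psi^{\,n/2-n_\beta}.
$$
The first factor is kept as it appears in the claimed bound. The second factor is at most $\varepsilon^{n/2-n_\beta}$ on the range $|x_{n,k}|\ge x_0\sqrt n$. This gives
$$
p_{n,k}\le C\varepsilon^{-n_\beta}\,\varepsilon^{n/2}\exp\left(-\frac{x_{n,k}^2}{2\beta}\right)\psi\left(\frac{x_{n,k}}{\beta\sqrt n}\right)^{n/2}.
$$

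\textbf{Step 3 (absorbing the constant).} Set $\rho:=\varepsilon^{1/4}\in(0,1)$. Then
$$
C\varepsilon^{-n_\beta}\varepsilon^{n/2}=\rho^n\cdot C\varepsilon^{-n_\beta}\varepsilon^{n/4}.
$$
The factor $C\varepsilon^{-n_\beta}\varepsilon^{n/4}$ tends to $0$, so it is at most $1$ for all $n$ large enough, which yields the claim.

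There is no genuine obstacle in this argument. The only point needing care is Step 1: the decay has to come from Proposition \ref{prop:sub-Gaussian-large t} rather than from condition (2). Note that it is applied to $X$, which is legitimate since $D_\alpha(S_{n_0}\|Z_{n_0})<\infty$.
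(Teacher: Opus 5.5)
Your proposal is correct and follows essentially the same route as the paper: the decay $\psi(u)\to 0$ from Proposition \ref{prop:sub-Gaussian-large t} fixes $x_0=\beta u_0$, and you split $\psi^{n-n_\beta}=\psi^{n/2}\psi^{n/2-n_\beta}$ in the bound of Proposition \ref{prop:moderate-large deviation}. You then absorb $C\varepsilon^{-n_\beta}$ by choosing any $\rho>\sqrt{\varepsilon}$ (you take $\rho=\varepsilon^{1/4}$, the paper takes $\sqrt{\rho_0}<\rho<1$).
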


\begin{proof}	
Recall $\psi(t)=\mathbb{E}e^{tX}e^{-\beta t^{2}/2}$. By Proposition \ref{prop:sub-Gaussian-large t}, we have
$$
\lim_{|t|\to\infty}\psi(t)=0.
$$
Therefore, there exist $t_0>0$ and $0<\rho_{0}<1$ such that $\psi(t)\le \rho_{0}$ whenever $|t|\geq t_0$. Set $x_{0}=\beta t_0>0$. Then, for $|x_{n,k}|\geq x_0 \sqrt{n}$, we have
$$
\psi\left(\frac{x_{n,k}}{\sqrt n\,\beta}\right)\le \rho_{0}<1.
$$
Take $\sqrt{\rho_0}<\rho<1$. For all $n$ large enough and $|x_{n,k}|\geq x_0 \sqrt{n}$, Proposition \ref{prop:moderate-large deviation} gives
\begin{align*}
p_{n,k} &\leq C\psi\left(\frac{x_{n,k}}{\beta\sqrt{n}}\right)^{n-n_{\beta}}\exp\left(-\frac{x_{n,k}^{2}}{2\beta}\right)\\
&=C\psi\left(\frac{x_{n,k}}{\beta\sqrt{n}}\right)^{n/2-n_{\beta}}\psi\left(\frac{x_{n,k}}{\beta\sqrt{n}}\right)^{n/2}\exp\left(-\frac{x_{n,k}^{2}}{2\beta}\right)\\
&\le C\rho_{0}^{n/2-n_{\beta}} \exp\left(-\frac{x_{n,k}^{2}}{2\beta}\right)\psi\left(\frac{x_{n,k}}{\beta\sqrt{n}}\right)^{n/2}\\
&\le  \rho^{n}\exp\left(-\frac{x_{n,k}^{2}}{2\beta}\right)\psi\left(\frac{x_{n,k}}{\beta\sqrt n}\right)^{n/2}.
\end{align*}
\end{proof}

\section{Bulk estimate of $D_\alpha(S_n\|Z_n)$}\label{The bulk estimate of D_alpha}


Set $M_{n}(s)=\sqrt{(s-2)\log n}$ for $s\in\mathbb Z$ and $s\geq 3$. We define
\begin{equation}\label{eq:I_Mn}
I(M_{n}(s))=\sum_{|x_{n,k}|\leq M_{n}(s)}\frac{p_{n,k}^{\alpha}}{q_{n,k}^{\alpha-1}}.
\end{equation}
The goal of this section is to give an asymptotic expansion of $I(M_{n}(s))$ that relies on the Edgeworth expansion of $p_{n,k}$.


\begin{prop}[\cite{BR10}, Theorem 22.1] \label{prop:llt}
If $\mathbb{E} |X|^{s}<\infty$ for some integer $s\geq 2$, then 
$$
\sup_{k\in\mathbb Z}(1+|x_{n,k}|^{s})|p_{n,k}-\tilde q_{n,k}|=o\big(n^{-\frac{s-1}{2}}\big).
$$
\end{prop}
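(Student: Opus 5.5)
Here $\tilde q_{n,k}$ denotes the lattice Edgeworth approximation $\tilde q_{n,k}=\delta_n\big(\varphi(x_{n,k})+\sum_{r=1}^{s-2}n^{-r/2}P_r(-\varphi;\{\gamma_j\})(x_{n,k})\big)$. The plan is the classical Fourier-analytic argument, as in \cite{BR10}, transplanted to the lattice $\mathcal L_n$. The Fourier transform of the function $g_n(x)=\varphi(x)+\sum_{r=1}^{s-2}n^{-r/2}P_r(-\varphi)(x)$ on $\mathbb R$ is
$$
\widehat g_n(t)=e^{-t^2/2}\Big(1+\sum_{r=1}^{s-2}n^{-r/2}\tilde P_r(it)\Big),
$$
where the $\tilde P_r$ are the usual cumulant polynomials. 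By the inversion formula \eqref{eq:lattice inverse Fourier transform} on $\mathcal L_n$, $p_{n,k}=\frac{\delta_n}{2\pi}\int_{-\pi/\delta_n}^{\pi/\delta_n}e^{-itx_{n,k}}\phi_n(t)\,dt$. Also $\tilde q_{n,k}=\frac{\delta_n}{2\pi}\int_{\mathbb R}e^{-itx_{n,k}}\widehat g_n(t)\,dt$ by ordinary Fourier inversion. Truncating the latter integral to $|t|\le\pi/\delta_n=\pi\sqrt n/h$ costs only $O(e^{-cn})$, uniformly in $k$.

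To gain the weight $|x_{n,k}|^{s}$, I will multiply by $x_{n,k}^m$ for $m=0$ and $m=s$ and integrate by parts $m$ times in $t$. For the lattice term, the boundary contributions vanish because $t\mapsto e^{-itx_{n,k}}\phi_n^{(j)}(t)$ is $2\pi/\delta_n$-periodic; this is the same periodicity as in Lemma \ref{lem:periodicity of F_n}, and differentiation preserves it. Since $\mathbb E|X|^s<\infty$, $\phi_n$ is $C^s$. For the Gaussian term, the boundary terms are exponentially small. This yields
$$
|x_{n,k}|^m|p_{n,k}-\tilde q_{n,k}|\le \frac{\delta_n}{2\pi}\int_{|t|\le \pi\sqrt n/h}\big|\tfrac{d^m}{dt^m}\big(\phi_n(t)-\widehat g_n(t)\big)\big|\,dt+O(e^{-cn}).
$$
Because $\delta_n=h/\sqrt n$ supplies a factor $n^{-1/2}$, it suffices to show that the integral is $o(n^{-(s-2)/2})$ for $m=0$ and $m=s$.

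I split the integral at $|t|=\varepsilon\sqrt n$. On the outer range $\varepsilon\sqrt n\le|t|\le\pi\sqrt n/h$, set $u=t/\sqrt n\in[\varepsilon,\pi/h]$. Since $h$ is the maximal span, $|\phi(u)|=1$ only for $u\in(2\pi/h)\mathbb Z$, so $\sup_{\varepsilon\le|u|\le\pi/h}|\phi(u)|=\theta<1$. The $m$-th derivative of $\phi(t/\sqrt n)^n$ is a sum of terms bounded by $C n^{m}\theta^{n-m}$, so this range contributes $O(n^{s+1/2}\theta^{n-s})$. The derivatives of $\widehat g_n$ there are $O(e^{-\varepsilon^2 n/4})$. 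Both are exponentially small.

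The inner range $|t|\le\varepsilon\sqrt n$ is the main obstacle. There I need the derivative estimate
$$
\Big|\tfrac{d^m}{dt^m}\big(\phi_n(t)-\widehat g_n(t)\big)\Big|\le o(n^{-(s-2)/2})\,(1+|t|^{3(s-1)+m})e^{-t^2/4},\qquad 0\le m\le s,
$$
uniformly in $|t|\le\varepsilon\sqrt n$, with $\varepsilon$ chosen small. The first step is to write $n\log\phi(t/\sqrt n)=-t^2/2+\sum_{j=3}^{s}\gamma_j(it)^j/(j!\,n^{(j-2)/2})+R_n(t)$. Since only $s$ moments are available, the remainder satisfies $R_n(t)=o(|t|^s n^{-(s-2)/2})$, with matching bounds $o(|t|^{s-m}n^{-(s-2)/2})$ for its derivatives. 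The next step is to expand the exponential of the cumulant sum and use $|e^{w}-\sum_{j<J}w^j/j!|\le |w|^J e^{|w|}/J!$. For $\varepsilon$ small, $|w|\le t^2/4$ in this range, which is what keeps the Gaussian factor $e^{-t^2/4}$. Derivatives are then handled by the Leibniz and Fa\`a di Bruno formulas. This is the content of the lemmas preceding Theorem 22.1 in \cite{BR10}, and I would invoke them for the bookkeeping rather than redo it. Integrating the displayed bound over $t$ gives $o(n^{-(s-2)/2})$, and multiplying by $\delta_n$ gives $o(n^{-(s-1)/2})$. Adding the cases $m=0$ and $m=s$ controls $(1+|x_{n,k}|^s)|p_{n,k}-\tilde q_{n,k}|$ uniformly in $k$.
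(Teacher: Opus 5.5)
Your outline is correct in substance. It is the standard Fourier-analytic proof of Theorem~22.1 in \cite{BR10}, which the paper cites without giving a proof of its own. The ingredients match:
lattice inversion;
multiplication by $x_{n,k}^m$ for $m=0,s$, with the lattice boundary terms killed by $2\pi/\delta_n$-periodicity;
the derivative Edgeworth estimate on $|t|\le\varepsilon\sqrt n$;
and $\sup_{\varepsilon\le|u|\le\pi/h}|\phi(u)|<1$ from maximality of the span on the outer range.
For the periodicity you only need the real-line identity $\phi(u+2\pi/h)=e^{2\pi i a/h}\phi(u)$, which holds for every lattice variable. So the entire-extension setting of Lemma~\ref{lem:periodicity of F_n} is not needed, and it would not be available under a mere moment assumption.

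One step must be reordered. For the Gaussian term you truncate to $|t|\le\pi/\delta_n$ first, then integrate by parts, and assert that the boundary terms are exponentially small.
\begin{itemize}
\item The boundary terms have the form $x_{n,k}^{m-1-j}e^{-itx_{n,k}}\widehat g_n^{(j)}(t)$ at $t=\pm\pi/\delta_n$, so they are $O(|x_{n,k}|^{m-1}e^{-cn})$.
\item Likewise, your truncation error, which is $O(e^{-cn})$ uniformly in $k$, becomes $O(|x_{n,k}|^{m}e^{-cn})$ once multiplied by $x_{n,k}^m$.
\end{itemize}
Neither bound is uniform over the unbounded lattice, and uniformity in $k$ is exactly what the statement requires.

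The fix is to integrate by parts on all of $\mathbb R$ first, where $\widehat g_n$ is Schwartz and no boundary terms arise:
\[
x_{n,k}^m\tilde q_{n,k}=\frac{\delta_n}{2\pi}\int_{\mathbb R}e^{-itx_{n,k}}\Big(-i\frac{d}{dt}\Big)^m\widehat g_n(t)\,dt .
\]
Only then discard $|t|>\pi/\delta_n$. The cost is $\frac{\delta_n}{2\pi}\int_{|t|>\pi/\delta_n}|\widehat g_n^{(m)}(t)|\,dt=O(e^{-cn})$, independent of $k$. With this change your displayed inequality holds as stated.

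A smaller precision concerns the inner range. With $\varepsilon$ fixed, the cumulant remainder only satisfies
\[
|R_n^{(j)}(t)|\le\eta(|t|/\sqrt n)\,|t|^{s-j}n^{-(s-2)/2},\qquad \eta(\rho)\to0 \text{ as } \rho\to0 .
\]
This is not $o(n^{-(s-2)/2})$ uniformly on $|t|\le\varepsilon\sqrt n$. After integrating against $(1+|t|^N)e^{-t^2/4}$, dominated convergence still yields $o(n^{-(s-2)/2})$, which is all you use. Since you defer this bookkeeping to \cite{BR10}, this is acceptable.
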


Here, the Edgeworth expansion $\tilde q_{n,k}$ of $p_{n,k}$ of order $s$ is defined by
$$ 
\tilde q_{n,k}=q_{n,k}+q_{n,k}\sum_{\ell=1}^{s-2}Q_{\ell}(x_{n,k})n^{-\ell/2},
$$
where 
\begin{equation*} 
Q_{\ell}(x)=\sum\frac{1}{t_{1}!\cdots t_{\ell}!}\left(\frac{\gamma_{3}}{3!}\right)^{t_{1}}\cdots\left(\frac{\gamma_{\ell+2}}{(\ell+2)!}\right)^{t_{\ell}}H_{\ell+2v}(x).
\end{equation*}
The sum is taken over $t_{1},\cdots,t_{\ell}\in \mathbb Z_{\ge 0}$ such that $t_{1}+2t_{2}+\cdots+\ell t_{\ell}=\ell$, and we set  $v=t_{1}+t_{2}+\cdots+t_{\ell}$. Here,  $\gamma_{j}$ denotes the $j$-th cumulant of $X$, while $H_j(x)$ is  the Chebyshev-Hermite polynomial of degree $j$ with leading term $x^{j}$ defined by 
$$
\varphi^{(j)}(x)=(-1)^{j}H_j(x)\varphi(x).
$$
The degree of $Q_{\ell}(x)$ is at most $3\ell$ that holds if and only $t_{1}=j $ and $t_{2}=\cdots t_{\ell}=0$. 
One can check that 
\begin{equation*} 
H_j(-x)=(-1)^{j}H_j(x),
\end{equation*}
and therefore
\begin{equation} \label{eq:Q}
Q_\ell(-x)=(-1)^{\ell}Q(x).
\end{equation}

\begin{prop} \label{prop:bulk-expansion}
Let $X_{1},\cdots, X_{n}$ be independent copies of a lattice random variable $X$ with mean zero, variance one and maximal span $h>0$. Suppose $\mathbb{E} |X|^{s} <\infty$ for some $s\in\mathbb Z$ and $s\geq 3$. Then, for $\alpha >1$, we have
$$
I(M_{n}(s))=1+\sum_{j=1}^{\lfloor s/2-1 \rfloor}b_{j}n^{-j}+o\big(n^{-\frac{s-2}{2}}\big).
$$
Here, 
$$
b_{j}=\sum\frac{(\alpha)_m}{m_1!\cdots m_{2j}!}\int_{\mathbb R}Q_1^{m_1}(x)\cdots Q_{2j}^{m_{2j}}(x)\varphi(x)\,dx,
$$
where the sum is taken over $m_{1},\cdots,m_{2j}\in\mathbb Z_{\ge 0}$ such that $m_{1}+2m_{2}+\cdots+2jm_{2j}=2j$, and we set $m=m_{1}+m_2+\cdots+m_{2j}$ and write $(\alpha)_{m}=\alpha(\alpha-1)\cdots(\alpha-m+1)$. 
\end{prop}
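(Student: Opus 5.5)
The plan is to write $p_{n,k}=q_{n,k}(1+\Delta_{n,k})$ on the central region $|x_{n,k}|\le M_n(s)$, expand $(1+\Delta_{n,k})^\alpha$ by Taylor's formula, and then turn the resulting quantized Gaussian sums into Gaussian integrals via Lemmas \ref{lem:gaussian-moment-a} and \ref{lem:gaussian-moment-tail}. We then have
$$
I(M_n(s))=\sum_{|x_{n,k}|\le M_n(s)}q_{n,k}\bigl(1+\Delta_{n,k}\bigr)^\alpha .
$$
By Proposition \ref{prop:llt}, $p_{n,k}=\tilde q_{n,k}+r_{n,k}$ with $|r_{n,k}|\le \varepsilon_n n^{-(s-1)/2}(1+|x_{n,k}|^s)^{-1}$ and $\varepsilon_n\to0$. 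Hence
$$
\Delta_{n,k}=\sum_{\ell=1}^{s-2}Q_\ell(x_{n,k})n^{-\ell/2}+\frac{r_{n,k}}{q_{n,k}}.
$$
On $|x|\le M_n(s)=\sqrt{(s-2)\log n}$ we have $q_{n,k}^{-1}\le C\sqrt n\, n^{(s-2)/2}$. The error term is therefore $r_{n,k}/q_{n,k}=o(1)\cdot\sqrt n\,n^{(s-2)/2}n^{-(s-1)/2}=o(1)$. Since $Q_\ell$ has polynomial growth, the polynomial part is $O((\log n)^{3(s-2)/2}n^{-1/2})$. So $\Delta_{n,k}\to0$ uniformly in the central region, and the expansion of $(1+\Delta)^\alpha$ is legitimate.

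The key steps, in order, are as follows.

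\textbf{Step 1 (remainder).} Split $\Delta=P+R$, where $P=\sum_\ell Q_\ell n^{-\ell/2}$ and $R=r_{n,k}/q_{n,k}$. Then
$$
(1+P+R)^\alpha-(1+P)^\alpha=O(|R|),
$$
so the contribution of $R$ is at most $\sum_{|x|\le M_n}|r_{n,k}|$. The number of lattice points in the region is $O(M_n\sqrt n)$, but the weight $(1+|x|^s)^{-1}$ is summable with step $\delta_n$. The sum is therefore $O(\sqrt n\,\varepsilon_n n^{-(s-1)/2})=o(n^{-(s-2)/2})$.

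\textbf{Step 2 (expansion of the main part).} Expand $(1+P)^\alpha$ in powers of $P$ up to order $s-2$. The Taylor remainder is $O(|P|^{s-1})=O(n^{-(s-1)/2}\,\mathrm{poly}(x))$, which is negligible after summing against $q_{n,k}$. Next expand $P^m$ via the multinomial formula. Collecting powers $n^{-r/2}$ with $r\le s-2$ gives coefficients
$$
\sum\frac{(\alpha)_m}{m_1!\cdots m_r!}\,Q_1^{m_1}\cdots Q_r^{m_r},\qquad m_1+2m_2+\cdots+rm_r=r .
$$
Terms of total order exceeding $s-2$ are $O(n^{-(s-1)/2}\mathrm{poly}(\log n))$ and are dropped.

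\textbf{Step 3 (summing).} Each coefficient is a polynomial $f$, and the sum runs only over $|x_{n,k}|\le M_n$. Write it as the full-lattice sum minus the tail. The full sum equals $\int f\varphi+O(e^{-cn})$ by Lemma \ref{lem:gaussian-moment-a}. For the tail, Lemma \ref{lem:gaussian-moment-tail} bounds it by
$$
\int_{|x|\ge M_n}|x|^d\varphi+O\bigl(\delta_nM_n^d\varphi(M_n)\bigr)=O\bigl(n^{-(s-2)/2}(\log n)^{d/2}\bigr)\cdot(\ldots).
$$
This is the delicate point. Since $\varphi(M_n)=n^{-(s-2)/2}/\sqrt{2\pi}$, the tail only gives $O(n^{-(s-2)/2}\mathrm{poly}\log n)$ and not $o(n^{-(s-2)/2})$. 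For coefficients carrying a factor $n^{-r/2}$ with $r\ge1$, multiplication by $n^{-r/2}$ makes them $o(n^{-(s-2)/2})$. For the zeroth-order term $\sum q_{n,k}$, the truncation error is about $\int_{|x|>M_n}\varphi\sim n^{-(s-2)/2}/M_n$, which is $o(n^{-(s-2)/2})$ only thanks to the Mills-ratio factor $1/M_n$. Together with Lemma \ref{lem:gaussian-moment-tail}'s $\delta_n\varphi(M_n)=O(n^{-(s-1)/2})$, this suffices, so the choice of $M_n$ is exactly borderline but works.

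\textbf{Step 4 (parity).} The resulting integral coefficients multiply $n^{-r/2}$. By \eqref{eq:Q}, $Q_1^{m_1}\cdots Q_r^{m_r}(-x)=(-1)^{r}(\cdots)(x)$ because $\sum \ell m_\ell=r$. Hence the integrals against the even density $\varphi$ vanish for odd $r$. Only $r=2j$ survives, giving $b_jn^{-j}$ for $j\le\lfloor (s-2)/2\rfloor$, which is the claimed expansion.

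I expect the main obstacle to be the bookkeeping in Steps 1 and 3. One has to verify uniformly that the local-limit remainder divided by $q_{n,k}$ stays small up to $M_n$, which uses the precise constant in $M_n(s)$. One also has to verify that the zeroth-order truncation is $o$ and not merely $O$ of $n^{-(s-2)/2}$. If the latter fails, I would enlarge $M_n$ slightly (e.g.\ $\sqrt{(s-2)\log n+ 2\log\log n}$), at the cost of a correspondingly small loss in Step 1.
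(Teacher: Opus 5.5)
Your proposal is correct and follows essentially the same route as the paper. The local limit theorem makes both the Edgeworth correction and $r_{n,k}/q_{n,k}$ uniformly small on $|x_{n,k}|\le M_n(s)$; the remainder contributes $O(\sqrt n)\cdot o\big(n^{-\frac{s-1}{2}}\big)$; $(1+P)^\alpha$ is Taylor-expanded to order $s-2$; quantized Gaussian sums become integrals via Lemmas \ref{lem:gaussian-moment-a} and \ref{lem:gaussian-moment-tail}, with the Mills-ratio factor $1/M_n$ handling the zeroth-order truncation as in the paper's $\ell=0$ bound; and parity removes the odd powers. Your fallback of enlarging $M_n$ is therefore unnecessary, and it would in any case change the quantity $I(M_n(s))$ in the statement.
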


\begin{proof}
We proceed the proof in the following three steps.

\textbf{(1) Expansion of $p_{n,k}^{\alpha}/q_{n,k}^{\alpha}$}. Proposition \ref{prop:llt} gives the representation of $p_{n, k}/q_{n, k}$ as follows
\begin{equation}\label{eq:p/q-expansion}
\frac{p_{n,k}}{q_{n,k}}=1+R_{n}(x_{n,k})+\frac{o\big(n^{-\frac{s-1}{2}}\big)}{q_{n,k}(1+|x_{n,k}|^s)},
\end{equation}
where 
\begin{equation}\label{eq:R_n}
R_{n}(x_{n,k})=\sum_{\ell=1}^{s-2}Q_{\ell}(x_{n,k})n^{-\ell/2}.
\end{equation}
For all $1\leq \ell\leq s-2$, we have $\deg Q_{\ell}(x)\leq 3\ell\le 3(s-2)$ and hence there exist some constant $C_\ell>0$ such that $|Q_\ell(x)|\leq C_\ell(1+|x|^{3(s-2)})$ for all $x\in\mathbb R$. Set $C=C_1+\cdots+C_{s-2}$. Then we have 
\begin{align*}
\sup_{|x_{n,k}|\leq M_{n}(s)}|R_{n}(x_{n,k})|&\leq \frac{C}{\sqrt{n}}\sup_{|x_{n,k}|\leq M_{n}(s)}\left(1+|x_{n,k}|^{3(s-2)}\right)\\
&\leq \frac{C}{\sqrt{n}}\big(1+((s-2  )\text{log}~n)^{\frac{3(s-2)}{2}}\big)\to 0\quad \text{as}~n\to \infty.
\end{align*}
For $|x_{n,k}|\leq M_{n}(s)$, we have 
$$
\frac{1}{q_{n, k}}=\frac{\sqrt{2\pi n}}{h}e^{x_{n, k}^2/2}\le \frac{\sqrt{2\pi n}}{h}e^{M_n(s)^2/2}=\frac{\sqrt{2\pi }}{h}n^{\frac{s-1}{2}}.
$$
This gives
$$
\sup_{|x_{n,k}|\leq M_{n}(s)}\frac{o\big(n^{-\frac{s-1}{2}}\big)}{q_{n,k}(1+|x_{n,k}|^s)}\to 0 \quad\text{as}~~n\to \infty.
$$
Since the last two terms in the representation \eqref{eq:p/q-expansion} are asymptotically small, we can write
$$
\frac{p_{n,k}^{\alpha}}{q_{n,k}^{\alpha}}=(1+R_{n}(x_{n,k}))^{\alpha}+\frac{o\big(n^{-\frac{s-1}{2}}\big)}{q_{n,k}(1+|x_{n,k}|^s)}.
$$
We further apply the Taylor expansion with the Lagrange remainder for $(1+R_{n}(x_{n,k}))^{\alpha}$ around zero to obtain the following expansion
\begin{equation}\label{eq:p/q-alpha-expanion}
 \frac{p_{n,k}^{\alpha}}{q_{n,k}^{\alpha}}=1+\sum_{m=1}^{s-2}\frac{(\alpha)_{m}}{m!}R_{n}(x_{n, k})^{m}+C_{s}(x_{n,k})n^{-\frac{s-1}{2}}+\frac{o\big(n^{-\frac{s-1}{2}}\big)}{q_{n,k}(1+|x_{n,k}|^s)},
\end{equation}
where
\begin{equation}\label{eq:C_s}
C_{s}(x_{n,k})=\frac{(\alpha)_{s-1}}{(s-1)!}(1+\xi_{n,k})^{\alpha-s+1}\left(\sqrt{n}R_{n}(x_{n,k})\right)^{s-1}, \quad 0<\xi_{n,k}<R_{n}(x_{n,k}).
\end{equation}  

\textbf{(2) Expansion of $I(M_n(s))$}. Our definition of $I(M_n(s))$ in \eqref{eq:I_Mn} and the representation of $p_{n, k}^\alpha/q_{n, k}^\alpha$ in \eqref{eq:p/q-alpha-expanion} give
\begin{align}
I(M_{n}(s)) 
&=\sum_{|x_{n,k}|\leq M_{n}(s)}q_{n,k}+\sum_{m=1}^{s-2}\frac{(\alpha)_{m}}{m!}\sum_{|x_{n,k}|\leq M_{n}(s)}R_{n}(x_{n, k})^{m}q_{n,k} \notag\\
&\quad+\left[\sum_{|x_{n,k}|\leq M_{n}(s)} C_{s}(x_{n,k})q_{n,k}\right]n^{-\frac{s-1}{2}}+\left[\sum_{|x_{n,k}|\leq M_{n}(s)}\frac{1}{1+|x_{n,k}|^s}\right]o\big(n^{-\frac{s-1}{2}}\big).\label{eq:I_Mn-expansion}
\end{align}
we next estimate each term on the right hand side of \eqref{eq:I_Mn-expansion}. For $\ell \geq 0$, there exists some $\widetilde{C}_\ell>0$, such that 
\begin{equation}\label{eq:gaussian bound}
\int_{|x|>M_n(s)} |x|^\ell \varphi(x)\,dx
\leq \widetilde{C}_\ell M_n(s)^{\ell-1} e^{-M_n(s)^2/2}=O\left(\big(\sqrt{(s-2)\log n}\big)^{\ell-1}n^{-\frac{s-2}{2}}\right).
\end{equation}
Combine \eqref{eq:normalizing-const}, Lemma \ref{lem:gaussian-moment-tail} and \eqref{eq:gaussian bound} with $\ell=0$ to obtain
\begin{align}
\sum_{|x_{n,k}|\leq M_n(s)} q_{n,k}&=\sum_{k\in\mathbb{Z}} q_{n,k}-\sum_{|x_{n,k}|> M_{n}(s)}q_{n,k} \notag\\
&=1+O(e^{-2\pi^{2}n/h^{2}})-\int_{|x|>M_{n}(s)}\varphi(x)\,dx+O(\delta_{n}\varphi(M_{n}(s)))\notag\\
&=1+o\big(n^{-\frac{s-2}{2}}\big). \label{eq:q expansion}
\end{align}
We have shown in the previous part that $|R_{n}(x)|\leq C(1+|x|^{3(s-2)})/\sqrt{n}$ for all $x\in\mathbb R$. Then we apply Lemma \ref{lem:gaussian-moment-tail} and \eqref{eq:gaussian bound} with $\ell=3m(s-2)$ to $R_{n}(x)^{m}$ for $m=1,2,\cdots,s-2$ and obtain
\begin{align}
&~~~\sum_{|x_{n,k}|> M_{n}(s)}R_{n}(x_{n, k})^{m}q_{n,k} \notag\\
&\le \frac{C^m}{n^{m/2}}\sum_{|x_{n,k}|> M_{n}(s)}(1+|x_{n,k}|^{3(s-2)})^mq_{n, k}\notag\\
&\le \frac{C^m}{n^{m/2}}\left[\int_{|x|\ge M_n(x)} (1+|x|^{3(s-2)})^m\varphi(x)dx+ O\left(\delta_{n}M_{n}(s)^{3m(s-2)}\varphi(M_{n}(s))\right)\right]\notag\\
&=\frac{C^m}{n^{m/2}}\left[\big(\sqrt{(s-2)\log n}\big)^{3m(s-2)-1}n^{-\frac{s-2}{2}}+o\big(n^{-\frac{s-2}{2}}\big)\right]\notag\\
&=o\big(n^{-\frac{s-2}{2}}\big). \label{eq:R bound 1}
\end{align}
Then apply Lemma \ref{lem:gaussian-moment-a} to $R_{n}(x)^{m}$ and combine \eqref{eq:R bound 1} to obtain 
\begin{align}
\sum_{|x_{n,k}|\leq M_{n}(s)}R_{n}(x_{n, k})^{m}q_{n,k}
 &= \sum_{k\in \mathbb{Z}}R_{n}(x_{n, k})^{m}q_{n,k}-   \sum_{|x_{n,k}|> M_{n}(s)}R_{n}(x_{n, k})^{m}q_{n,k}\notag\\
=&\int_{\mathbb{R}}R_{n}(x)^{m} \varphi(x)~dx+o\big(n^{-\frac{s-2}{2}}\big)\label{eq:R bound 2}.
\end{align}
As defined in \eqref{eq:C_s}, one can see that $C_{s}(x_{n,k})$ is a polynomial of degree at most $3(s-1)(s-2)$, which is even. Therefore there exists some $\widetilde{C}>0$ such that $|C_{s}(x_{n,k})|\leq \widetilde{C}\big(1+x_{n,k}^{3(s-1)(s-2)}\big)$. Then we can apply Lemma \ref{lem:gaussian-moment-a} to obtain
\begin{align*}
\sum_{|x_{n,k}|\leq M_{n}(s)} C_{s}(x_{n,k})q_{n,k} &\leq \widetilde{C} \sum_{k \in \mathbb{Z}} \big(1+x_{n,k}^{3(s-1)(s-2)}\big)q_{n,k}\\
&=\widetilde{C}\int_{\mathbb{R}}\left(1+x^{3(s-1)(s-2)}\right)\varphi(x)\,dx+O(e^{-\pi^{2}n/h^{2}}).
\end{align*}
Since Gaussian random variables have finite moments, the above quantity is finite, and therefore 
\begin{equation} \label{eq:remainder}
 \left[\sum_{|x_{n,k}|\leq M_{n}(s)} C_{s}(x_{n,k})q_{n,k}\right]n^{-\frac{s-1}{2}}=O\big(n^{-\frac{s-1}{2}}\big).
\end{equation}
For the last term in \eqref{eq:I_Mn-expansion}, we can obtain 
\begin{equation*}
    \sum_{|x_{n,k}|\leq M_{n}(s)}\frac{1}{1+|x_{n,k}|^{s}}\leq \frac{1}{\delta_{n}}\sum_{k \in \mathbb{Z}}\frac{\delta_{n}}{1+|x_{n,k}|^{2}}\leq2+\frac{1}{\delta_{n}}\int_{-\infty}^{\infty}\frac{1}{1+x^{2}}\,dx=O\big(\sqrt{n}\big),
\end{equation*}
and hence 
\begin{equation}\label{eq:1+x^s summation}
    \left[\sum_{|x_{n,k}|\leq M_{n}(s)}\frac{1}{1+|x_{n,k}|^s}\right]o\big(n^{-\frac{s-1}{2}}\big)=o\big(n^{-\frac{s-2}{2}}\big).
\end{equation}
By \eqref{eq:I_Mn-expansion}, \eqref{eq:q expansion}, \eqref{eq:R bound 2}, \eqref{eq:remainder} and \eqref{eq:1+x^s summation}, we obtain the the following expansion
\begin{equation}\label{eq:I_M continous}
I(M_{n}(s))=1+\sum_{m=1}^{s-2}\frac{(\alpha)_{m}}{m!}\int_{\mathbb R} R_{n}(x)^{m} \varphi(x)\,dx + o\big(n^{-\frac{s-2}{2}}\big). 
\end{equation}

\textbf{(3) Further reduction.} By the definition of $R_{n}(x)$ in \eqref{eq:R_n},  we have
$$
R_{n}(x)^{m}=\sum_{m_1+\cdots+m_{s-2}=m}\frac{m!}{m_1!\cdots m_{s-2}!}n^{-N/2}Q_1^{m_1}(x)\cdots Q_{s-2}^{m_{s-2}}(x),
$$
where $N=m_1+2m_2+\cdots+(s-2)m_{s-2}.$ Then the main term of $I(M_n(s))-1$ can be written as
\begin{equation}\label{eq:the main term of I(M_n(s))-1}
\sum_{N=1}^{(s-2)^2} n^{-N/2}\sum\frac{(\alpha)_m}{m_1!\cdots m_{s-2}!}
\int_{-\infty}^{\infty}Q_1^{m_1}(x)\cdots Q_{s-2}^{m_{s-2}}(x)\varphi(x)\,dx,	
\end{equation}
where the inner sum is taken over $m_{1},\cdots,m_{2j} \in \mathbb{Z}_{\geq 0}$ such that $m_{1}+2m_{2}+\cdots+(s-2)m_{s-2}=N$ and $1\leq m:=m_1+\ldots+m_{s-2}\leq s-2$. 
Using property \eqref{eq:Q}, we have 
$$Q_1^{m_1}(-x)\cdots Q_{s-2}^{m_{s-2}}(-x)=(-1)^{N}Q_1^{m_1}(x)\cdots Q_{s-2}^{m_{s-2}}(x).
$$  
Hence, if $N$ is odd, $Q_1^{m_1}(x)\cdots Q_{s-2}^{m_{s-2}}(x)$ is also odd. Correspondingly, the integrals in \eqref{eq:the main term of I(M_n(s))-1} vanish. Therefore, we can assume that $N=2j$ for $~1\leq j\leq \lfloor (s-2)^{2}/2\rfloor$ and we necessarily have $m_{\ell}=0$ for $\ell>2j$. Then the main term of $I(M_n(s))-1$ in \eqref{eq:the main term of I(M_n(s))-1} can be further simplified as
\begin{align*}
\sum_{j=1}^{\lfloor (s-2)^{2}/2\rfloor} n^{-j}\sum\frac{(\alpha)_m}{m_1!\cdots m_{2j}!}\int_{-\infty}^{\infty}Q_1^{m_1}(x)\cdots Q_{2j}^{m_{2j}}(x)\varphi(x)\,dx,
\end{align*}
 the inner sum is taken over all $m_{1},\cdots,m_{2j} \in \mathbb{Z}_{\geq 0}$ such that $m_{1}+2m_{2}+\cdots+2jm_{2j}=2j$ and $1\leq m=m_1+\ldots+m_{2j}\leq s-2$.	We denote 
$$
b_{j}=\sum\frac{(\alpha)_m}{m_1!\cdots m_{2j}!}\int_{-\infty}^{\infty}Q_1^{m_1}(x)\cdots Q_{2j}^{m_{2j}}(x)\varphi(x)\,dx.
$$
Since the error in \eqref{eq:I_M continous} is $o\big(n^{-\frac{s-2}{2}}\big)$, it suffices to keep the first $\lfloor (s-2)/2\rfloor$  terms, and we obtain 
$$
I(M_{n}(s))=1+\sum_{j=1}^{\lfloor s/2-1 \rfloor}b_{j}n^{-j}+o\big(n^{-\frac{s-2}{2}}\big).
$$
This concludes the proof.
\end{proof}

    
\section{Proof of Theorem \ref{thm:main}} \label{sec:proof of main result}

We first recall the notations that will be used throughout this section. Let $S_{n}$ be the normalized sum of i.i.d. lattice random variables $X_{1}, \cdots, X_{n}$.  For any $k \in \mathbb Z$, we set $x_{n,k}=(na+kh)/\sqrt{n}$ and $p_{n,k}=\mathbb P(S_{n}=x_{n,k})$. Then $\delta_{n}=h/\sqrt{n }$ is the maximal span of $S_{n}$. Let $\varphi(x)$ denote the standard Gaussian density and define
the quantized Gaussian random variable  $Z_n$ with distribution proportional to $\{q_{n,k}\}_{k\in\mathbb Z}$, where $q_{n,k}=\delta_{n}\varphi(x_{n,k})$. For $\alpha>1$ and $\beta=\alpha/(\alpha-1)$, $\alpha$-R\'enyi divergence of $S_{n}$ and $Z_{n}$ is defined by 
\begin{equation}\label{eq:divergence}
D_{\alpha}(S_{n}||Z_{n})=\frac{1}{\alpha-1}\log \sum_{k \in \mathbb{Z}}\frac{p_{n,k}^{\alpha}}{q_{n,k}^{\alpha-1}}+\log\left(\sum_{k\in\mathbb{Z}}q_{n,k}\right),
\end{equation}
and denote 
\begin{equation}\label{eq:A_n denfinition}
A_{n,\alpha}=\sum_{k \in \mathbb{Z}}\frac{p_{n,k}^{\alpha}}{q_{n,k}^{\alpha-1}}.
\end{equation}

\subsection{Sufficiency} \label{ssec:sufficiency}
\begin{proof}
Using identities \eqref{eq:divergence} and  \eqref{eq:normalizing-const}, one can see that $D_{\alpha}(S_{n}||Z_{n})\to 0$ is equivalent to $A_{n,\alpha}\to 1$. We write $M_n(\eta)=\sqrt{(\eta-2)\log n}$ for $\eta \in \mathbb{Z}_{+}$. We decompose $A_{n,\alpha}$ as 
\begin{equation}\label{eq:A_n expansion}
A_{n,\alpha}=\sum_{|x_{n,k}|\leq M_{n}(\eta)}\frac{p_{n,k}^{\alpha}}{q_{n,k}^{\alpha-1}} +\sum_{x_{n,k}<- M_{n}(\eta)}\frac{p_{n,k}^{\alpha}}{q_{n,k}^{\alpha-1}}+\sum_{x_{n,k}> M_{n}(\eta)}\frac{p_{n,k}^{\alpha}}{q_{n,k}^{\alpha-1}}.
\end{equation}

Since $D_{\alpha}(S_{n_{0}}||Z_{n_{0}})<\infty$ for some $n_{0}$, Proposition \ref{prop:sub-Gaussian-a} yields $\mathbb{E}e^{cX^{2}}<\infty$ for any $0<c<1/(2\beta)$. Thus $X$ has finite absolute moments of all orders and, particularly, $\mathbb{E}|X|^{\eta}<\infty$ for all $\eta\in \mathbb Z_{\ge 1}$. For the first term of \eqref{eq:A_n expansion}, Proposition \ref{prop:bulk-expansion} gives 
\begin{equation}\label{eq:1st term}
\sum_{|x_{n,k}|\leq M_{n}(\eta)}\frac{p_{n,k}^{\alpha}}{q_{n,k}^{\alpha-1}}=1+\sum_{j=1}^{\lfloor \eta/2-1 \rfloor}b_{j}n^{-j}+o\big(n^{-\frac{\eta-2}{2}}\big).
\end{equation}
		
Next we establish an estimate for the second term of \eqref{eq:A_n expansion}. The third term can be handled similarly. Let $x_{0}$ be given in Corollary \ref{coro:large deviation} and let $M_n(\eta)/\sqrt n<x_{1}<x_{0}$. The value of $x_1$ will be determined later. Define
\begin{align*}
\Lambda_1&:=\{k\in \mathbb{Z}:~~ x_{n,k}\le -x_0\sqrt n\}, \\
\Lambda_2&:=\{k\in \mathbb{Z}:\ -x_0\sqrt n\le x_{n,k}<-x_1\sqrt n\},\\
\Lambda_3&:=\{k\in \mathbb{Z}:\ -x_1\sqrt n<x_{n,k}\le -M_n(\eta)\},
\end{align*}
and 
$$
I_j:=\sum_{k\in\Lambda_j}\frac{p_{n,k}^{\alpha}}{q_{n,k}^{\alpha-1}},\qquad j=1,2,3.
$$
Then we can write 
\begin{equation}\label{eq:2nd term}
\sum_{x_{n,k}<- M_{n}(\eta)}\frac{p_{n,k}^{\alpha}}{q_{n,k}^{\alpha-1}}=I_1+I_2+I_3.
\end{equation}

\textbf{(1) Estimate of $I_1$}. Since $|x_{n, k}|\ge x_0\sqrt n$, by Corollary \ref{coro:large deviation}, there exists $\rho \in (0,1)$ such that
$$
p_{n,k}\leq \rho^{n}\exp\left(-\frac{x_{n,k}^2}{2\beta}\right)\psi\left(\frac{x_{n,k}}{\beta\sqrt n}\right)^{n/2}.
$$
Direct calculations show that
$$
I_1\le\rho^{\alpha n}\left(\frac{\sqrt{2\pi}}{\delta_n}\right)^{\alpha-1}\sum_{k\in\Lambda_1}\psi\left(\frac{x_{n,k}}{\beta\sqrt n}\right)^{\alpha n/2}. 
$$
When $n$ is large enough, we select $\ell$ such that $\ell n_{0}\le \alpha n/2$ and $\ell\geq\alpha$. By the strict sub-Gaussian condition, we know that $\psi(t)=\mathbb{E} e^{tX}e^{-\beta t^{2}/2} \leq 1$ for all $t \in \mathbb{R}$. Therefore,  
\begin{align*}
\frac{\delta_{n}}{\beta\sqrt n}\sum_{k\in\Lambda_1}\psi\left(\frac{x_{n,k}}{\beta\sqrt n}\right)^{\alpha n/2}
&\le \frac{\delta_{n}}{\beta\sqrt n}\sum_{k\in\Lambda_1}\psi\left(\frac{x_{n,k}}{\beta\sqrt n}\right)^{\ell n_0}\\
&\le \frac{\delta_{n}}{\beta\sqrt n}\sum_{k\in\mathbb Z}\psi\left(\frac{x_{n,k}}{\beta\sqrt n}\right)^{\ell n_0} \xrightarrow{n\to\infty}\int_{\mathbb R}\psi(t)^{\ell n_{0}}\,dt<\infty.
\end{align*}
The integrability of $\psi(t)^{\ell n_{0}}$ follows from Corollary \ref{coro:integrability}. Then there exists $C_{1}>0$ such that for large $n$ we have
\begin{equation}\label{eq:I_1}
I_1\leq \rho^{\alpha n}\left(\frac{\sqrt{2\pi}}{\delta_n}\right)^{\alpha-1}\frac{\beta\sqrt n}{\delta_n}\cdot 2\int_{\mathbb R}\psi(t)^{\ell n_{0}}\,dt\leq C_{1}\rho^{\alpha n} n^{(\alpha+1)/2}.
\end{equation}

\textbf{(2) Estimate of $I_2$}. For $k\in\Lambda_2$, we have $x_{n,k}/\beta\sqrt{n} \in  \left[-x_0/\beta,-x_1/\beta\right]$, which does not contain $0$. Note $\psi(t)=\mathbb{E} e^{tX}e^{-\beta t^{2}/2}$ is continuous and $0<\psi(t)<1$ for all $t\ne 0$. We define
\begin{equation} \label{under 1}
\rho_1:=\max_{t\in[-x_0/\beta,-x_1/\beta]}\psi(t)<1.
\end{equation}
By Proposition \ref{prop:moderate-large deviation}, there exists some constant $C>0$  such that
$$
p_{n,k}\le C\exp\left(-\frac{x_{n,k}^{2}}{2\beta}\right)\psi\left(\frac{x_{n,k}}{\beta\sqrt n}\right)^{n-n_\beta}.
$$
Combing this with (\ref{under 1}), there exists $C_{2}>0$ such that 
\begin{align}
 I_2
 &\le C^{\alpha}\left(\frac{\sqrt{2\pi}}{\delta_n}\right)^{\alpha-1} \sum_{k\in\Lambda_2}\psi\left(\frac{x_{n,k}}{\beta\sqrt n}
\right)^{\alpha(n-n_\beta)}  \notag\\
&\leq C^{\alpha}\left(\frac{\sqrt{2\pi}}{\delta_n}\right)^{\alpha-1} \left(\frac{(x_{0}-x_{1})\sqrt{n}}{\delta_{n}}+1\right)\rho_{1}^{\alpha(n-n_\beta)} \notag\\
&\leq C_{2} n^{(\alpha+1)/2}	\rho_1^{\alpha(n-n_\beta)}. \label{eq:I_2}
\end{align}
 	
\textbf{(3) Estimate of $I_3$}. By Proposition \ref{prop:moderate-large deviation}, there exists some constant $C>0$  such that
\begin{equation}\label{eq:upper bound}
p_{n,k}\le C\exp\left(-\frac{x_{n,k}^{2}}{2\beta}\right)\psi\left(\frac{x_{n,k}}{\beta\sqrt n}\right)^{n-n_\beta}.
\end{equation}
We first estimate $\psi\left(x_{n,k}/\beta\sqrt n\right)$. Since $D(S_{n_0}||Z_{n_0})<\infty$ for some integer $n_0$, we know from Lemma \ref{lem:phi-phi_n-extension} that $\phi(t)=\mathbb{E}e^{itX}$ is entire. Clearly, $\phi(0)=1$. Hence for sufficiently small $r>0$ we have $\phi(it)\neq0$ whenever $|t|\le r$. Then the function $f(t)=\log \phi(it)$ is analytic for $|t|\le r$. One can check that 
$$
f(0)=0,\quad f'(0)=\mathbb{E}X=0,\quad f''(0)=\text{Var}(X)=1.
$$
Hence, for sufficiently small $r>0$, we have $f(t)= t^{2}/2+o(t^{2})$ for $|t|\le r$. Since $\beta=\alpha/(\alpha-1)>1$ for $\alpha>1$, we have
$$
|f(t)|\le\frac{(\beta+1)t^2}{4},\quad |t|\le r.
$$
Consequently, we have for $|t|\le r$ that
$$
|\psi(t)|=|\phi(-it)|e^{-\beta t^2/2} \le e^{(\beta+1)t^2/4}e^{-\beta t^2/2} 
=e^{-(\beta-1)t^2/4}.
$$
Set $x_1=\beta r$. For $k\in \Lambda_3$, we have $x_{n,k}/\sqrt{n}\beta\in [-x_1/\beta, -\sqrt{(\eta-2)\log n/n}]\subseteq [-r, 0]$. Therefore, 
\begin{equation}\label{eq:bound for psi}
\psi\left(\frac{x_{n,k}}{\sqrt{n}\beta}\right)\le\exp\left(-\frac{(\beta-1)x_{n,k}^2}{4\beta^2n}\right).  
\end{equation}
For $n\ge 2n_{\beta}$ with $n_{\beta}=\max\{2,\beta\}n_{0}$, we have $
\alpha(n-n_{\beta})\ge \alpha n/2.$ Since $0<\psi(t)\le 1$ for $|t|<r$,  combining \eqref{eq:upper bound} and \eqref{eq:bound for psi}, there exists $C_{3} >0$ such that 
\begin{align}
I_3	
&\le C^{\alpha}\left(\frac{\sqrt{2\pi}}{\delta_n}\right)^{\alpha-1}\sum_{k\in \Lambda_3}\psi\left(\frac{x_{n,k}}{\sqrt{n}\beta}\right)	^{\alpha(n-n_{\beta})} \notag \\
&\le C^{\alpha}\left(\frac{\sqrt{2\pi}}{\delta_n}\right)^{\alpha-1}\sum_{k\in \Lambda_{3}}\exp\left(
-\frac{\alpha n}{2}\cdot\frac{(\beta-1)x_{n,k}^2}{4n\beta^2}\right) \notag \\
&=C^{\alpha}\left(\frac{\sqrt{2\pi}}{\delta_n}\right)^{\alpha-1}\sum_{k\in \Lambda_{3}}\exp\left(
-\frac{x_{n,k}^2}{8\beta}\right) \notag \\
&\leq C^{\alpha}\left(\frac{\sqrt{2\pi}}{\delta_n}\right)^{\alpha-1} \frac{1}{\delta_{n}}\int_{-\infty}^{-M_{n}(\eta)+\delta_{n}}\exp\left(-\frac{x^2}{8\beta}\right)\,dx \label{eq:sum-integral}\\
&\leq  2\sqrt{\beta} \left(\frac{\sqrt{2\pi}C}{\delta_n}\right)^{\alpha} \exp\left(-\frac{(M_{n}(\eta)-\delta_{n})^{2}}{8\beta}\right) \label{eq:guassian tail intergral}\\
&\leq C_{3}n^{-\frac{\eta-2}{8\beta}+\frac{\alpha}{2}}. \label{eq:I_3}
\end{align}
Inequality \eqref{eq:sum-integral} follows from the monotonicity of $e^{-x^{2}/(8\beta)}$ and inequality \eqref{eq:guassian tail intergral} follows from a standard Gaussian tail estimate. We select $\eta$ large enough such that $-(\eta-2)/(8\beta)+\alpha/2 <-\eta/(16\beta)$. Combining \eqref{eq:2nd term}, \eqref{eq:I_1}, \eqref{eq:I_2} and \eqref{eq:I_3}, we have
\begin{equation}\label{eq:I_1+I_2+I_3}
\sum_{x_{n,k}<- M_{n}(\eta)}\frac{p_{n,k}^{\alpha}}{q_{n,k}^{\alpha-1}}=I_{1}+I_{2}+I_{3}=o\big(n^{-\frac{\eta}{16\beta}}\big).
\end{equation}
The same estimate holds for the third term of \eqref{eq:A_n expansion}. Combining \eqref{eq:A_n expansion}, \eqref{eq:1st term} and \eqref{eq:I_1+I_2+I_3}, we can obtain
\begin{equation}\label{A_n to 1}
A_{n,\alpha}=1+\sum_{j=1}^{\lfloor \eta/(16\beta) \rfloor}b_{j}n^{-j}+o\big(n^{-\frac{\eta}{16\beta}}\big)\to 1 \quad \text{as}~ n\to \infty.
\end{equation}

Moreover, for any given integer $s\geq 3$, we can choose a sufficiently large $\eta$, such that $(s-2)/2<\eta/(16\beta)<(\eta-2)/(8\beta)-\alpha/2$. Then we combine \eqref{eq:divergence}, \eqref{eq:A_n denfinition}, \eqref{eq:normalizing-const} and \eqref{A_n to 1} to obtain the asymptotic expansion
\begin{align*}
D_{\alpha}(S_{n}||Z_{n})
&=\frac{1}{\alpha-1}\log A_{n,\alpha}+\log\sum_{k\in \mathbb{Z}}q_{n,k}=\frac{1}{\alpha-1} \sum_{j=1}^{\lfloor s/2-1 \rfloor}b_{j}n^{-j}+o(n^{-\frac{s-2}{2}}).
\end{align*}
\end{proof}


\subsection{Necessity} \label{ssec:necessity}
\begin{proof}
Let $\alpha >1$ and $\beta=\alpha/(\alpha-1)$. Suppose that $D_{\alpha}(S_n\|Z_n)\to 0$ as $n\to \infty$. Our goal is to establish the following two claims: (1) $D_{\alpha}(S_{n_{0}}\|Z_{n_{0}}) <\infty$ for some $n_{0}$; and (2) $\mathbb{E}e^{tX}<e^{\beta t^2/2}$ for all $t\in\mathbb R,~t\neq 0$. Claim (1) is immediate. It remains to prove claim (2).

Since $D_{\alpha}(S_{n}\|Z_{n})\to 0$ as $n\to \infty$, there exists some $n_{0}$ such that for $n\geq n_{0}$, $D_{\alpha}(S_{n}\|Z_{n})<\infty$. We know from Proposition \ref{prop:sub-Gaussian-b} that there exists $C>0$ such that for all $t\in \mathbb{R}$,
\begin{equation*}\label{eq:X}
\mathbb{E}e^{tX}\leq C^{1/n}A_{n,\alpha}^{1/n\alpha }e^{\beta t^{2}/2}.
\end{equation*}
Using \eqref{eq:divergence} and \eqref{eq:normalizing-const}, we know that $D_{\alpha}(S_{n}||Z_{n})\to 0$ is equivalent to $A_{n,\alpha}\to 1$ as $n \to \infty$. 
Then we can let $n\to\infty$ and obtain that
$$
\mathbb{E}e^{tX}\le e^{\beta t^2/2} \quad \text{for all}~t\in\mathbb R.
$$

So it suffices to show that $\mathbb{E}e^{-tX}<e^{\beta t^2/2}$ for $t\neq 0$. Suppose, to the contrary, that there exists $t_0\neq 0$ such that $\mathbb{E}e^{-t_0X}=e^{\beta t_0^2/2}$.
Then  we get
\begin{equation}\label{eq:equality for u}
e^{n\beta t_0^2/2}=\left(\mathbb{E}e^{-t_0X}\right)^{n}=\mathbb{E}e^{-t_0\sqrt{n}S_{n}}=\sum_{k\in \mathbb{Z}}p_{n,k} e^{-t_0\sqrt{n}x_{n,k}},
\end{equation}
where $x_{n,k}=(na+kh)/\sqrt{n}$. We define two sequences of functions $\{f_{n}\}_{n=1}^\infty$ and $\{g_{n}\}_{n=1}^\infty$ on $\mathcal{L}_{n}$ by 
\begin{equation*} \label{eq:definition-a,b}
f_{n}(x_{n,k})=\frac{p_{n,k}/q_{n,k}^{1/\beta}}{A_{n,\alpha}^{1/\alpha}},\qquad g_{n}(x_{n,k})
=\frac{q_{n,k}^{1/\beta}e^{-t_{0}\sqrt{n}x_{n,k}}}{B_{n,\beta}^{1/\beta}} ,
\end{equation*}
where
\begin{equation}\label{eq:B_n}
B_{n,\beta}=\sum_{k\in\mathbb{Z}}q_{n,k}e^{-\beta t_{0}\sqrt{n}x_{n,k}}=\big(1+O(e^{-2\pi^2n/h^2})\big)e^{n\beta^{2} t_{0}^{2}/2}.
\end{equation}
The second equality of \eqref{eq:B_n} follows from \eqref{eq:translation}. Then we introduce probability distributions $F_n$, $G_n$ and $Q_{n}$  on $\mathcal{L}_{n}$, which are defined by
\begin{equation*} \label{eq:F-G-Q}
F_n(\{x_{n,k}\})=f_n(x_{n,k})^{\alpha},
\qquad
G_n(\{x_{n,k}\})=g_n(x_{n,k})^{\beta},
\qquad
Q_n(\{x_{n,k}\})=q_{n,k}/C_n.
\end{equation*}
Here, the normalizing constant $C_n$ is
\begin{equation}\label{eq:C_n}
C_n=\sum_{k\in\mathbb{Z}}q_{n,k}=1+O\big(e^{-2\pi^2n/h^{2}}\big),
\end{equation}
where the second equality follows from \eqref{eq:normalizing-const}. We next compare $F_{n}$, $G_{n}$ and $Q_{n}$ in total variation distance. 

\textbf{(1) Estimate of $\|F_{n}-G_{n}\|_{\mathrm{TV}}$.} One can check that
\begin{equation*}\label{eq:a,b-norm}
\|f_{n}\|_{\alpha}^\alpha=\sum_{k\in\mathbb{Z}}f_{n}(x_{n,k})^{\alpha}=1,
\qquad
\|g_{n}\|_{\beta}^\beta=\sum_{k\in\mathbb{Z}}g_{n}(x_{n,k})^{\beta}=1.
\end{equation*}
we combine \eqref{eq:equality for u}, \eqref{eq:B_n} and $A_{n,\alpha}\to1$ as $n \to \infty$ to obtain
\begin{equation*}\label{eq:ab-norm}
\|f_{n}g_{n}\|_{1}=\sum_{k\in\mathbb{Z}}f_{n}(x_{n,k})g_{n}(x_{n,k})=
\frac{\sum_{k\in\mathbb{Z}}p_{n,k} e^{-t_{0}\sqrt{n}x_{n,k}}}{A_{n,\alpha}^{1/\alpha}\cdot B_{n,\beta}^{1/\beta}}=\frac{e^{n\beta t_{0}^{2}/2}}{A_{n,\alpha}^{1/\alpha}\cdot B_{n,\beta}^{1/\beta}}\to 1 .
\end{equation*}
Then we can apply the sequential stability of H\"{o}lder's inequality in Corollary \ref{coro:holder-stabi-seq} to $\{f_{n}\}_{n\geq1}$ and $\{g_{n}\}_{n\geq1}$ and obtain
\begin{equation}\label{eq:F-G}
\|F_{n}-G_{n}\|_{\mathrm{TV}}=\sum_{k\in\mathbb{Z}}\left|f_{n}(x_{n,k})^{\alpha}-g_{n}(x_{n,k})^{\beta}\right|\to  0 \quad \text{as}~n\to \infty.
\end{equation}

\textbf{(2) Estimate of ${\|F_{n}-Q_{n}\|_{\mathrm{TV}}}$.} Combining \eqref{eq:C_n} and $A_{n,\alpha}\to 1$ as $n\to \infty$, we can get
\begin{equation*}\label{eq:a-Q}
\sum_{k\in\mathbb{Z}}f_{n}(x_{n,k})Q_{n}(\{x_{n,k}\})^{1/\beta}=\sum_{k\in \mathbb{Z}}\frac{p_{n,k}/q_{n,k}^{1/\beta}}{A_{n,\alpha}^{1/\alpha}}\frac{q_{n,k}^{1/\beta}}{C_{n}^{1/\beta}}=\frac{1}{A_{n,\alpha}^{1/\alpha}\cdot C_n^{1/\beta}}\to  1 .
\end{equation*}
Then we apply the sequential stability of H\"{o}lder's inequality in Corollary \ref{coro:holder-stabi-seq} to the sequences $\{f_{n}\}_{n\geq 1}$ and $\{Q_{n}^{1/\beta}\}_{n\geq 1}$ and obtain 
\begin{equation}\label{eq:F-Q}
\|F_n-Q_n\|_{\mathrm{TV}}=\sum_{k\in\mathbb{Z}}\left|f_{n}(x_{n,k})^{\alpha}- Q_{n}(\{x_{n,k}\})	\right|\to  0 \quad \text{as}~n\to \infty.
\end{equation}

\textbf{(3) Estimate of $\|Q_{n}-G_{n}\|_{\mathrm{TV}}$.} Recall that $q_{n,k}=\delta_{n}\varphi(x_{n,k})$, where $x_{n,k}=(na+kh)/\sqrt{n}$, $\delta_{n}=h/\sqrt{n}$ and $\varphi(x)$ is the standard Gaussian density. Then we can write
\begin{align*}
Q_{n}(\{x_{n,k}\})&=\frac{q_{n,k}}{\sum_{k\in\mathbb{Z}}q_{n,k}}=\frac{\varphi(x_{n,k})}{\sum_{k\in \mathbb Z}\varphi(x_{n,k})}, \\
G_{n}(\{x_{n,k}\})&=\frac{q_{n,k}e^{-\beta t_{0}\sqrt{n}x_{n,k}}}{\sum_{k\in\mathbb{Z}}q_{n,k}e^{-\beta t_{0}\sqrt{n}x_{n,k}}}=\frac{	\varphi(x_{n,k}+\beta t_{0}\sqrt{n})}{\sum_{j\in\mathbb{Z}}\varphi(x_{n,j}+\beta t_{0}\sqrt{n})}.
\end{align*}
Hence, $Q_{n}$ is the quantized Gaussian distribution on $\mathcal{L}_{n}$ arising from the standard Gaussian $\mathcal{N}(0,1)$, whereas $G_{n}$ is its counterpart induced by the shifted Gaussian $\mathcal{N}(-\beta t_{0} \sqrt{n},\,1)$. Define
$$
\mathcal{T}_n=\left\{x_{n,k}\in\mathcal{L}_{n}:|x_{n,k}|<\frac{|\beta t_{0}|\sqrt{n}}{2}\right\}.
$$
Then it is not hard to see that 
$$
Q_n(\mathcal{T}_n)\to  1
\quad \text{and}\quad 
G_n(\mathcal{T}_n)\to  0 \quad \text{as}~n\to \infty.
$$
Therefore we have
\begin{equation}\label{eq:Q-G}
\|Q_n-G_n\|_{\mathrm{TV}}\geq2|Q_n(\mathcal{T}_n)-G_n(\mathcal{T}_n)|\to  2 \quad \text{as}~n\to \infty.
\end{equation}
However, by triangle inequality, we combine \eqref{eq:F-G} and \eqref{eq:F-Q} and obtain
$$
\|Q_n-G_n\|_{\mathrm{TV}}\leq\|Q_n-F_n\|_{\mathrm{TV}}+\|F_n-G_n\|_{\mathrm{TV}}\to  0\quad \text{as}~n\to \infty.
$$
This contradicts \eqref{eq:Q-G}. Hence, there is no $t_0\neq 0$ such that
$$
\mathbb{E}e^{-t_0X}=e^{\beta t_{0}^2/2}.
$$
This concludes the proof.
\end{proof}




\bibliographystyle{plain}

\end{document}